\newcommand{\e}{\text{e}}
\newcommand{\R}{\mathds R}
\newcommand{\Pp}{\mathds P}
\newcommand{\Ee}{\mathds E}
\newcommand{\I}{\mathds 1}
\newcommand{\rd}{{\mathds R^d}}
\def\<{\langle}
\def\>{\rangle}
\newtheorem{theorem}{Theorem}[section]
\newtheorem{lemma}[theorem]{Lemma}
\numberwithin{equation}{section}
\theoremstyle{definition}
\newtheorem{example}[theorem]{Example}
\newtheorem{remark}[theorem]{Remark}
\begin{document}
\allowdisplaybreaks
\title[Strict Kantorovich contractions for Markov chains] {\bfseries
Strict Kantorovich contractions for Markov chains and Euler schemes with general noise}

\author{Lu-Jing Huang\qquad Mateusz B. Majka \qquad  Jian Wang}

\thanks{\emph{L.-J. Huang:}
 School of Mathematics and Statistics, Fujian Normal University, 350007 Fuzhou, P.R. China.
   \texttt{lujingh@yeah.net}}

\thanks{\emph{M.\ B.\ Majka:}
School of Mathematical and Computer Sciences, Heriot-Watt University, Edinburgh, EH14 4AS, UK. \texttt{m.majka@hw.ac.uk}}

\thanks{\emph{J. Wang:}
School of Mathematics and Statistics \& Fujian Key Laboratory of Mathematical
Analysis and Applications (FJKLMAA) \& Center for Applied Mathematics of Fujian Province (FJNU), Fujian Normal University, 350007 Fuzhou, P.R. China.
   \texttt{jianwang@fjnu.edu.cn}}

\date{}
\maketitle

\begin{abstract}
We study contractions of Markov chains on general metric spaces with respect to some carefully designed distance-like functions, which are comparable to the total variation and the standard $L^p$-Wasserstein distances for $p \ge 1$. We present explicit lower bounds of the corresponding contraction rates. By employing the refined basic coupling and the coupling by reflection, the results are applied to Markov chains whose transitions include additive stochastic noises that are not necessarily isotropic. This can be useful in the study of Euler schemes for SDEs driven by L\'evy noises. In particular, motivated by recent works on the use of heavy tailed processes in Markov Chain Monte Carlo, we show that chains driven by the $\alpha$-stable noise can have better contraction rates than corresponding chains driven by the Gaussian noise, due to the heavy tails of the $\alpha$-stable distribution.

\noindent \textbf{Keywords:} Markov chain, strict Kantorovich contractivity, total variation, Wasserstein distance, refined basic coupling, coupling by reflection

\medskip

\noindent \textbf{MSC 2020:} 60J05, 60J22, 65C30, 65C40.

\end{abstract}
\allowdisplaybreaks

\section{Introduction}\label{section1}
Let $(S,d)$ be a separable metric space. For any probability measures $\mu$ and $\nu$ on $S$, the Kantorovich distance  ($L^1$-Wasserstein distance) with respect to the metric $d$ is defined by
\begin{equation}\label{eq:defWasserstein}
W_d(\mu,\nu)=\inf_{\xi\sim\mu,\eta\sim\nu}\Ee[d(\xi,\eta)],
\end{equation}
where the infimum is taken over all
pairs of random variables $(\xi,\eta)$ defined on a common probability
space $(\Omega, \mathcal F)$ such that $\xi$ (resp.\ $\eta$) is distributed as $\mu$ (resp.\ $\nu$). A Markov chain with the transition kernel $p(x,dz)$ on $(S,d)$ is called strictly Kantorovich contractive with respect to $d$, if there exists a constant $c\in (0,1)$ such that
\begin{equation}\label{kant-cont}
W_d(p(x,\cdot),p(y,\cdot))\le (1-c)d(x,y),\quad x,y\in S.
\end{equation}
Strict Kantorovich contractivity goes back to the famous work \cite{D70} by Dobrushin,
and \eqref{kant-cont} is also known as the ``Dobrushin uniqueness condition''.
It is closely related to the ergodicity of Markov processes, particle systems or other random dynamical systems, see e.g. \cite{Chen04, DGW04} and the references therein. In particular, strict Kantorovich contractivity provides an estimate for the spectral gap, cf. \cite{CW94}.
Ollivier in \cite{O09} introduced the
concept of the
``Ricci curvature'' of Markov chains on metric spaces,
according to which a
 Markov chain has a ``positive Ricci curvature'' if \eqref{kant-cont} holds true. L\'evy-Gromov-like Gaussian concentration theorem and log-Sobolev inequalities are
 established in \cite{O09} under positive Ricci curvature. See \cite{P16} for further extensions
 in this direction. Strict Kantorovich contractivity also plays an important role in the Monte Carlo method.
For example, it has been shown in \cite{JO10} that the constant $c$ in \eqref{kant-cont} provides upper bounds for the biases of empirical means when we use the Markov chain to simulate a given target distribution.
Moreover, strict Kantorovich contractivity is crucial in the study of the perturbation theory for Markov chains, and for obtaining quantitative bounds of the biases for Markov Chain
 Monte Carlo (MCMC)
algorithms \cite{PS14,JM17,RS18,MMS20,QH19}.

However,
verifying \eqref{kant-cont} is usually highly non-trivial in applications. In particular, requiring strict Kantorovich contractivity with respect to the underlying distance $d$ of the state space
can be too restrictive.
Instead, a natural approach is to modify the original distance involved in the strict Kantorovich contractivity condition \eqref{kant-cont}.
For this purpose, we need to take into consideration the following two
issues.
One is to find a good Markov coupling $((X,Y), \Pp_{x,y})$ for the transition kernels $p(x,dz)$ and $p(y,dz)$, and the second is to design a suitable distance-like function $\rho$ such that
\begin{equation}\label{rho-cont}
\Ee[\rho(X,Y)]\le (1-c_*)\rho(x,y),\quad x,y\in S
\end{equation}
holds for some $c_*\in (0,1)$. Here, $\rho(x,y)$ is called a distance-like function on $S\times S$, if $\rho(x,y)=0$ if and only if $x=y$, and $\rho(x,y)=\rho(y,x)>0$ for all $x\neq y\in S$.
Note that \eqref{rho-cont} naturally implies bounds in a related Kantorovich metric or semi-metric, cf.\ \cite[Lemma 2.1]{EM19}.
This approach has been widely used in the research on the ergodicity of SDEs driven by Brownian motions or pure jump L\'evy processes (see \cite{E11,E16, LW16,W16,M17,LW19}), and
it has proven to be very useful in the study of MCMC (see \cite{HSV14, BEZ20, BS20, EM19, MMS20}).

\smallskip

The present paper is strongly motivated by  \cite{EM19}, where two different kinds of distance-like functions $\rho$ for Markov chains on general metric state space were provided, and some quantitative bounds on contraction rates for \eqref{rho-cont} were also obtained. Here, on the one hand, we focus on the construction of
other
distance functions such that \eqref{rho-cont} is satisfied; on the other hand, we present explicit Markov couplings for general Markov chains on $\R^d$, whose transitions enjoy the following form
\begin{equation}\label{eq:chain}
x\mapsto x+hb(x)+ g(h)\xi, \quad x\in \R^d
\end{equation}
with $h, g(h)>0$, $b:\R^d\to \R^d$ and $\xi$ being a
``noise''
random variable,
with
an arbitrary
probability distribution $\mu$.
 Compared to \cite{EM19}, the improvements of our paper are as follows:
\begin{itemize}
	\item In \cite{EM19}, chains given by \eqref{eq:chain} were studied only with the Gaussian noise $\xi$. We cover a much larger class of (not necessarily isotropic)
	additive stochastic noises; including the $\alpha$-stable law with
	$\alpha\in (1,2)$.
	 We construct the corresponding Markov couplings such that \eqref{rho-cont} holds with appropriately designed distance-like functions $\rho(x,y)$.
	
\item The designed distance-like functions $\rho(x,y)$ in \eqref{rho-cont} and the associated lower bound estimates for the constant $c_*$ are more straightforward than in \cite{EM19},
which allows for a comparison of contraction rates for chains such as \eqref{eq:chain} driven by different types of noise. In particular, we show how contraction rates corresponding to the $\alpha$-stable noise can be larger than those corresponding to the Gaussian noise (cf.\ Remark \ref{remark:noises}). This is relevant in the context of applications of chains \eqref{eq:chain} in
MCMC methods. Indeed, recently there has been some interest in MCMC methods utilizing \eqref{eq:chain} with a heavy-tailed noise \cite{NguyenSimsekli2019, Simsekli2017, YeZhu}, based on the intuition that such chains may explore the state space better than their Gaussian counterparts, and hence the corresponding approximate sampling algorithms may exhibit faster convergence. Our observations aim to provide at least a partial rigorous justification for that intuition.
\item Besides contractions in terms of the total variation or the standard $L^1$-Wasserstein distance as in \cite{EM19}, we study contractions in terms of the $L^p$-Wasserstein distance (with $p>1$) as well.
Note that the $L^p$-Wasserstein distance $W_d^p$, by analogy to \eqref{eq:defWasserstein}, is defined by $W_d^p(\mu,\nu)=\left( \inf_{\xi\sim\mu,\eta\sim\nu}\Ee[d(\xi,\eta)^p] \right)^{1/p}$, and obtaining contractions with respect to $W_d^p$ requires using functions $\rho$ in \eqref{rho-cont} that are convex, rather than concave, at infinity.
\end{itemize}

We would like to point out that some of the contraction rates from \cite{EM19} for chains \eqref{eq:chain} with the Gaussian noise have been recently improved in \cite{DD19}, and related coupling methods have been also extended in \cite{DEEGM21} to study functional autoregressive processes (again, only with the Gaussian noise). However, in the present paper, as explained above, we focus on extending the results from \cite{EM19} in different directions.

The rest of the paper is arranged as follows. In Section \ref{section2}, we present general results for strict Kantorovich contractions of Markov chains on a separable metric space. In particular, the contractions in terms of the total variation, the $L^1$-Wasserstein distance and the $L^p$-Wasserstein distance with $p>1$ are studied. In Section \ref{section3}, we apply the results from Section \ref{section2} to Markov chains whose transitions involve additive stochastic noises. In particular, three different Markov couplings (via the reflection coupling and the refined basic coupling as well as their variants) are constructed in order to illustrate the practicality of the results in Section \ref{section2}.

\section{Strict Kantorovich contractions for Markov chains}\label{section2}
Let $p(x,dz)$ be a Markov transition kernel on a separable metric space $(S,d)$. Assume that $((X,Y), \Pp_{x,y})$ is a Markov coupling of $p(x,\cdot)$ and $p(y,\cdot)$ with $x,y\in S$. That is, there are random variables $X,Y:\Omega\rightarrow S$ defined on a common measurable space $(\Omega,\mathcal{F})$ and a probability kernel $(x,y,A)\mapsto \Pp_{x,y}(A)$ from $S\times S\times \mathscr{B}(S)$ to $[0,1]$ such that
$$
X\sim p(x,\cdot)\ \text{and}\ Y\sim p(y,\cdot)\quad \text{under } \Pp_{x,y}.
$$
Throughout this paper, we always assume that for all $x\in S$,
$$\Pp_{x,x}(d(X,Y)=0)=\Pp_{x,x}(X=Y)=1.$$
We denote by $\Ee_{x,y}$ the expectation with respect to $\Pp_{x,y}$.

For any fixed $x,y\in S$ and $l\ge0$, define
\begin{equation}\label{E:alpha}
\begin{split}
&\pi(x,y)=\Pp_{x,y}(d(X,Y)=0),\quad \beta(x,y)=\Ee_{x,y}[d(X,Y)-d(x,y)],\\
&\alpha_l(x,y)=\frac{1}{2}\Ee_{x,y}[(d(X,Y)-d(x,y))^2\I_{\{d(X,Y)<d(x,y)+l\}}].
\end{split}\end{equation}
For simplicity, we denote $\alpha(x,y)=\alpha_0(x,y)$.
Roughly speaking, $\pi(x,y)$ indicates the
probability
that the Markov coupling $((X,Y), \Pp_{x,y})$  will succeed
after one step; $\beta(x,y)$ is the drift of the Markov coupling, while $\alpha_l(x,y)$ reflects the fluctuations that mainly decrease the distance of the Markov coupling.
Note that the functions $\pi(x,y)$ and $\beta(x,y)$ have been used in \cite[(2.12) and (2.10)]{EM19} before, while $\alpha_l(x,y)$ here is a little different from (indeed is larger than)
the corresponding functions
in \cite[(2.13) and (2.22)]{EM19}.

The aim of this section is to construct various
distance-like
functions $\rho(x,y)$ such that the Markov coupling $((X,Y), \Pp_{x,y})$  is strictly contractive in the sense that for any $x,y\in S$,
\eqref{rho-cont}
holds with some $c_*\in (0,1)$.

To present the
distance-like
function $\rho(x,y)$ in an explicit way, we need the following notation. For any $r>0$ and $l\ge0$, set
\begin{equation}\label{E:alpha-}
\underline{\pi}(r)=\inf_{d(x,y)=r}\pi(x,y),\quad \overline{\beta}(r)=\sup_{d(x,y)=r}\beta(x,y),\quad   \underline{\alpha}_l(r)=\inf_{d(x,y)=r}\alpha_l(x,y).
\end{equation}
For simplicity we write $\underline{\alpha}(r)=\underline{\alpha}_0(r)$.

In the remaining part of this section, we will present four results on establishing \eqref{rho-cont} for several different distance-like functions $\rho$. In the first two instances (Theorems \ref{main-TV} and \ref{second-TV}), $\rho$ will be comparable with the total variation metric, in the third instance (Theorem \ref{thm-w1}) with the $L^1$-Wasserstein distance and in the final instance (Theorem \ref{thm-wq}) with the $L^p$-Wasserstein distance for $p > 1$. Hence the first three results can be considered as analogues of the results from \cite[Section 2]{EM19}, albeit with more straightforward formulas for $\rho$ and the corresponding contractivity constant $c_*$. This will be crucial in our analysis of contractions for Euler schemes in Section \ref{section3}. The fourth result has no counterpart in \cite{EM19}.

\subsection{Contraction in terms of the total variation}\label{total-variance}
In order to consider the contraction of Markov chains in terms of the total variation, we
require a positive probability that the Markov coupling
$((X,Y),\Pp_{x,y})$ will succeed
after one step when $d(x,y)$ is small; see (a1) in Assumption {\bf(A)}.

We first make the following assumption.

\noindent{\bf Assumption (A)} {\it There exist positive constants $c_0$ and $r_0\le r_1$ such that
\begin{itemize}
\item[{\rm(a1)}] $\inf_{r\in(0,r_0]}\underline{\pi}(r)>0${\rm;}

\item[{\rm(a2)}] $\inf_{r\in(r_0,r_1]}\underline{\alpha}(r)>0${\rm} and $\sup_{r\in(0,r_1]}\overline{\beta}(r)<\infty${\rm;}

\item[{\rm(a3)}] $\overline{\beta}(r)\le -c_0 r$ for all $r\in (r_1,\infty)$.
\end{itemize}}

Assumption {\bf(A)}{\rm(a3)} is a dissipative condition on the drift term for large distances. Such
conditions have been used to establish the exponential ergodicity of diffusions or SDEs with jumps; see \cite{E11,E16,LW16, W16, M17, LW19,LW20}.

Let
\begin{equation}\label{1f}
\rho(x,y)=a\I_{\{d(x,y)>0\}}+f_0(d(x,y)),
\end{equation}
where $f_0(r)= 1-\e^{-cr}+c\e^{-cr_1}r$,
and the constants $a$, $c > 0$ are chosen such that
\begin{equation}\label{1b}
c \ge \sup_{r\in(r_0,r_1]}\frac{4\overline{\beta}(r)_+}{\underline{\alpha}(r)}+1, \quad
a \ge 2c(1+e^{-c r_1})
 \sup_{r\in(0,r_0]}\frac{\overline{\beta}(r)_+}{\underline{\pi}(r)}+1
\end{equation}
with $\overline{\beta}(r)_+=\overline{\beta}(r) \vee 0$. By Assumption {\bf (A)}, the function $\rho$ is well defined, and it is easy to check that
$$
f_0'(r)=c\e^{-cr}+c\e^{-cr_1}>0,\quad  f_0''(r)=-c^2\e^{-cr}<0, \quad
f_0^{(3)}(r)=c^3\e^{-cr}>0.
$$ It also holds that there is a constant $\bar c\ge 1$ such that for all $x,y\in S$,
$$\bar c^{-1}\left(\I_{\{d(x,y)>0\}} +d(x,y)\right)\le \rho(x,y)\le \bar c \left(\I_{\{d(x,y)>0\}} +d(x,y)\right).$$

\begin{theorem}\label{main-TV}
Suppose that Assumption {\bf(A)} is satisfied. Let
$
\rho(x,y)
$
be defined by \eqref{1f}. Then there exists a constant $c_*\in (0,1)$ such that for all $x,y\in S$,
$$
\Ee_{x,y}[\rho(X,Y)]\le (1-c_*)\rho(x,y).
$$
\end{theorem}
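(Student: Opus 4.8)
The plan is to estimate $\Ee_{x,y}[\rho(X,Y)]$ separately over the three ranges of $r:=d(x,y)$ singled out by Assumption (A) --- namely $r\in(0,r_0]$, $r\in(r_0,r_1]$, and $r\in(r_1,\infty)$ (the case $x=y$ being trivial, since then $X=Y$ almost surely) --- producing in each range a positive lower bound for $\rho(x,y)-\Ee_{x,y}[\rho(X,Y)]$ proportional to $\rho(x,y)$, and finally taking $c_*$ to be the minimum of the three constants obtained (shrinking it if necessary so that $c_*\in(0,1)$). Write $R:=d(X,Y)$, viewed as a random variable under $\Pp_{x,y}$. Two elementary observations will be used throughout. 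First, $\Ee_{x,y}[a\I_{\{R>0\}}]=a(1-\pi(x,y))$, which is $\le a(1-\underline{\pi}(r))$ when $r\le r_0$ and is $\le a$ in general. Second, since $f_0$ is concave with $f_0'>0$ and $f_0''$ increasing (the last because $f_0^{(3)}>0$), the second-order Taylor formula with integral remainder, together with $f_0''(s)\le f_0''(r)$ for $s\le r$, yields the pointwise bound
\[
f_0(R)\le f_0(r)+f_0'(r)(R-r)+\tfrac12 f_0''(r)(R-r)^2\I_{\{R<r\}}.
\]
Integrating this against $\Pp_{x,y}$ and recalling \eqref{E:alpha} and \eqref{E:alpha-} gives
\[
\Ee_{x,y}[f_0(R)]\le f_0(r)+f_0'(r)\beta(x,y)+f_0''(r)\alpha(x,y)\le f_0(r)+f_0'(r)\overline{\beta}(r)_++f_0''(r)\underline{\alpha}(r),
\]
where in the last step we used $f_0'>0$, $f_0''<0$, $\beta(x,y)\le\overline{\beta}(r)$, and $\alpha(x,y)\ge\underline{\alpha}(r)$.

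For small distances, $r\in(0,r_0]$, I would discard the negative second-order term and use $f_0'(r)\le f_0'(0)=c(1+\e^{-cr_1})$ to get $\Ee_{x,y}[\rho(X,Y)]\le a(1-\underline{\pi}(r))+f_0(r)+c(1+\e^{-cr_1})\overline{\beta}(r)_+$, so that $\rho(x,y)-\Ee_{x,y}[\rho(X,Y)]\ge a\underline{\pi}(r)-c(1+\e^{-cr_1})\overline{\beta}(r)_+$; the lower bound on $a$ in \eqref{1b} makes this $\ge\inf_{s\in(0,r_0]}\underline{\pi}(s)>0$ by (a1). Since $\rho(x,y)\le a+f_0(r_1)$ here, dividing yields a positive contraction constant for this range.

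For intermediate distances, $r\in(r_0,r_1]$, I would keep the second-order term and bound $\Ee_{x,y}[a\I_{\{R>0\}}]\le a$, giving $\rho(x,y)-\Ee_{x,y}[\rho(X,Y)]\ge -f_0'(r)\overline{\beta}(r)_+-f_0''(r)\underline{\alpha}(r)$. For $r\le r_1$ one has $f_0'(r)\le 2c\e^{-cr}$ and $-f_0''(r)=c^2\e^{-cr}$, so the right-hand side equals $c\e^{-cr}\bigl(c\underline{\alpha}(r)-2\overline{\beta}(r)_+\bigr)$; the lower bound on $c$ in \eqref{1b} forces $c\underline{\alpha}(r)\ge4\overline{\beta}(r)_+$, hence $c\underline{\alpha}(r)-2\overline{\beta}(r)_+\ge\tfrac12 c\underline{\alpha}(r)$, and the gain is $\ge\tfrac12 c^2\e^{-cr_1}\inf_{s\in(r_0,r_1]}\underline{\alpha}(s)>0$ by (a2). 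Again $\rho(x,y)\le a+f_0(r_1)$, so this range also gives a positive constant. Finally, for large distances, $r>r_1$, I would use only concavity (the first-order bound) with the dissipativity (a3): from $\beta(x,y)\le\overline{\beta}(r)\le-c_0 r$ and $f_0'(r)\ge c\e^{-cr_1}$ we get $\Ee_{x,y}[\rho(X,Y)]\le\rho(x,y)-cc_0\e^{-cr_1}r$, while $\rho(x,y)=a+1-\e^{-cr}+c\e^{-cr_1}r\le\bigl((a+1)/r_1+c\e^{-cr_1}\bigr)r$ for $r>r_1$; this yields the constant $c_*\le cc_0\e^{-cr_1}/\bigl((a+1)/r_1+c\e^{-cr_1}\bigr)$ for the last range. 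Taking the minimum of the three constants completes the argument.

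The step I expect to be most delicate is the refined Taylor inequality displayed above: one has to use the monotonicity of $f_0''$ (equivalently, convexity of $f_0'$, i.e.\ $f_0^{(3)}>0$) to replace $f_0''$ along the segment between $R$ and $r$ by the single value $f_0''(r)$, and one has to check that the indicator $\I_{\{R<r\}}$ produced this way coincides with the indicator $\I_{\{d(X,Y)<d(x,y)+l\}}$ at $l=0$ that defines $\alpha=\alpha_0$ in \eqref{E:alpha}. Everything else is bookkeeping with constants: the choices of $a$ and $c$ in \eqref{1b} are tuned precisely so that the harmful positive-drift contributions $\overline{\beta}(r)_+$ are absorbed by $a\underline{\pi}(r)$ for small $r$ and by $c\underline{\alpha}(r)$ for intermediate $r$, while (a3) handles large $r$.
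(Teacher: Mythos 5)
Your proposal is correct and follows essentially the same route as the paper's proof: the same pointwise bound $f_0(R)\le f_0(r)+f_0'(r)(R-r)+\tfrac12 f_0''(r)(R-r)^2\I_{\{R<r\}}$ (justified via $f_0''<0$ and $f_0^{(3)}>0$), followed by the same three-range case analysis in which the choices of $a$ and $c$ in \eqref{1b} absorb $\overline{\beta}(r)_+$ and (a3) handles $r>r_1$. The only differences are slightly cruder (but still valid) intermediate constants in the first two ranges; your large-$r$ constant coincides with the paper's $c_3$.
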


\begin{proof} When $x=y$, the assertion holds trivially by our assumption on the Markov coupling $((X,Y),\Pp_{x,y})$ that $\Pp_{x,x}(d(X,Y)=0)=1$ and the fact that $\rho(x,x)=0$ for all $x\in S$.

Next, let $x,y\in S$ with $r:=d(x,y)>0$. Denote $R=d(X,Y)$. By the mean value theorem and the definition of $f_0$,
$$
\aligned
\rho(X,Y)-\rho(x,y)&=-a\I_{\{R=0\}}+f_0(R)-f_0(r)\\
&\le -a \I_{\{R=0\}}+f'_0(r)(R-r)+ \frac{1}{2}\left(\max_{\xi\in [r\wedge R, r\vee R]} f_0''(\xi)\right) (R-r)^2\\
&\le -a\I_{\{R=0\}}+f_0'(r)(R-r)+\frac{1}{2}f_0''(r)(R-r)^2\I_{\{R<r\}},
\endaligned
$$
where the last inequality follows from the facts that $f_0^{(3)}> 0$ and $f_0''< 0$ on $(0,\infty)$. By taking expectation and using the facts that $f_0'>0$ and $f''_0<0$ on $(0,\infty)$,  we get that
\begin{equation}\label{fR-fr}
\Ee_{x,y}[\rho(X,Y)]-\rho(x,y)\le -a\underline{\pi}(r)+\overline{\beta}(r)f_0'(r)+\underline{\alpha}(r)f_0''(r).
\end{equation}

(1) Suppose that $r\in(0,r_0]$. Then, it follows from (a1), (a2) and \eqref{1b} that
$$
\aligned
\Ee_{x,y}[\rho(X,Y)]-\rho(x,y)&\le -a\underline{\pi}(r)+(c\e^{-cr}+c\e^{-cr_1})\overline{\beta}(r)\le -a\underline{\pi}(r)+c(1+e^{-c r_1})\overline{\beta}(r)_+\\
&\le -\frac{a}{2}\underline{\pi}(r)\le -c_1\rho(x,y),
\endaligned
$$
where
$$
c_1=\frac{a}{2(a+1+cr_0\e^{-cr_1})}\inf_{r\in(0,r_0]}\underline{\pi}(r).
$$

(2) Suppose that $r\in(r_0,r_1]$. Then, by (a2), (a3) and \eqref{1b},
$$
\aligned
\Ee_{x,y}[\rho(X,Y)]-\rho(x,y)&\le \overline{\beta}(r)f'_0(r)+\underline{\alpha}(r)f''_0(r)=(c\e^{-cr}+c\e^{-cr_1})\overline{\beta}(r)-c^2\e^{-cr}\underline{\alpha}(r)\\
&\le 2c\e^{-cr}\overline{\beta}(r)_+-c^2\e^{-cr}\underline{\alpha}(r)\le-\frac{1}{2}c^2\e^{-cr_1}\underline{\alpha}(r)\le -c_2\rho(x,y),
\endaligned
$$
where
$$
c_2=\frac{c^2\e^{-cr_1}}{2(a+1+cr_1\e^{-cr_1})}\inf_{r\in(r_0,r_1]}\underline{\alpha}(r).
$$

(3) Suppose that $r\in(r_1,\infty)$. Then,
\begin{align*}
\Ee_{x,y}[\rho(X,Y)]-\rho(x,y)\le & (c\e^{-cr}+c\e^{-cr_1})\overline{\beta}(r)\le c\e^{-cr_1}\overline{\beta}(r)\\
\le & - cc_0\e^{-cr_1} r\le -c_3(a+1+c\e^{-cr_1}r)\le -c_3\rho(x,y),
\end{align*} where $$c_3=c_0[1+(1+a)r_1^{-1}c^{-1} e^{cr_1}]^{-1}.$$

Therefore, taking
$
c_*=\min\{c_1,c_2,c_3\}
$
gives us the desired assertion.
\end{proof}

\begin{remark}\label{remark:main-TV}
	In many applications, the constant $r_1$ in Assumption {\bf(A)}(a3) is uniquely determined as a parameter of the model under consideration. However, for many models, condition (a1) in Assumption {\bf(A)} can hold with any $r_0 \in (0,r_1]$, as long as the transition probability of the Markov chain has an unbounded support. Hence,
	in such cases
	one can easily simplify the statement of Theorem \ref{main-TV} by taking $r_0 = r_1$ and disregarding $\underline{\alpha}(r)$. Then the bounds for the constants $c$ and $a$ specified in \eqref{1b} become simplified and we can take
	\begin{equation*}
	c := 1, \quad
	a \ge 2(1+e^{-r_1}) \sup_{r\in(0,r_1]}\frac{\overline{\beta}(r)_+}{\underline{\pi}(r)}+1,
	\end{equation*}
	whereas the contractivity condition $\Ee_{x,y}[\rho(X,Y)]\le (1-c_*)\rho(x,y)$ in the statement of Theorem \ref{main-TV} holds with $c_* = \min \{ c_1, c_3 \}$, where
	\begin{equation*} c_1=\frac{a}{2(a+1+r_1\e^{-r_1})}\inf_{r\in(0,r_1]}\underline{\pi}(r), \quad c_3=c_0[1+(1+a)r_1^{-1} e^{r_1}]^{-1}.
	\end{equation*}
	We will use this simplified version of Theorem \ref{main-TV} in Section \ref{section3} in our analysis of contraction rates for Euler schemes with different types of noise.
\end{remark}

In the following, we consider
a variation
of Theorem \ref{main-TV},
in which
we will replace (a3) in Assumption {\bf(A)} by the following condition:
\begin{itemize}
\item[(a3$^*$)] \it There exist a measurable function $V:S\rightarrow [0,\infty)$, and constants $C_0\in(0,\infty)$ and $\lambda \in (0,1)$ such that
\begin{itemize}
\item[{\rm(i)}] for all $x\in S$, $$\int_S V(z)\,p(x,dz)\le (1-\lambda)V(x)+C_0;$$

\item[{\rm(ii)}] $\lim_{r\rightarrow\infty}\inf_{d(x,y)=r}[V(x)+V(y)]=\infty$ and $\lim_{r\rightarrow\infty}\sup_{d(x,y)=r}\frac{\beta(x,y)}{V(x)+V(y)}=0$.
\end{itemize}
\end{itemize}

Assumption (a3$^*$)(i) is a standard Lyapunov condition for the exponential ergodicity of the Markov chain with transition kernel $p(x,dz)$; see \cite{MT}.
For any fixed $K>0$, we define
\begin{equation}\label{eq:r1}
r_1=\sup\left\{d(x,y):\ x,y\in S \text{ with }  \frac{\beta(x,y)}{V(x)+V(y)}\ge K\  \text{or } V(x)+V(y)\le \frac{4C_0}{\lambda}\right\}.
\end{equation}
In particular,  by (ii) in condition (a3$^{*}$), $r_1<\infty.$
Under conditions (a1), (a2) and (a3$^*$), we  are concerned
with the following distance-like
function
\begin{equation}\label{2f--}
\rho(x,y)=a\I_{\{d(x,y)>0\}}+ f_1(d(x,y))+\epsilon\left(V(x)+V(y)\right)\I_{\{d(x,y)>0\}},
\end{equation}
where  $
f_1(r)=1-\e^{-cr}
$, $$a=2\left(c\sup_{r\in(0,r_0]}\overline{\beta}(r)_++2\epsilon C_0\right)\left[\inf_{r\in(0,r_0]}\underline{\pi}(r)\right]^{-1}, \quad \epsilon= \frac{1}{8C_0}c^2\e^{-cr_1}\inf_{r\in(r_0, r_1]}\underline{\alpha}(r),
$$ and
$$c=\sup_{r\in(r_0,r_1]}\frac{2\overline{\beta}(r)_+}{\underline{\alpha}(r)}+A+1, \quad A=\frac{16KC_0}{\lambda\inf_{r\in(r_0,r_1]}\underline{\alpha}(r)}.$$
Consequently, there is a constant $\bar c\ge1$ such that for all $x,y\in S$,
$$\bar c^{-1} \I_{\{d(x,y)>0\}} (1+V(x)+V(y))\le \rho(x,y)\le \bar c \I_{\{d(x,y)>0\}} (1+V(x)+V(y)).$$ Thus, using the distance-like
function $\rho(x,y)$ defined by \eqref{2f--}, we can consider the convergence to equilibrium in terms of the  weighted total variation metric; see \cite{HM} for related discussions on this topic.

\begin{theorem}\label{second-TV}
Suppose that {\rm(a1)} and {\rm(a2)} in Assumption {\bf (A)} and {\rm(a3$^*$)} hold. Let
$
\rho(x,y)
$
be  defined by \eqref{2f--}.  Then there exists a constant $c_*\in(0,1)$ such that for all $x,y \in S$,
$$
\Ee_{x,y}[\rho(X,Y)]\le (1-c_*)\rho(x,y).
$$
\end{theorem}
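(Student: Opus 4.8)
The plan is to mimic the proof of Theorem \ref{main-TV}, now carrying along the extra term $\epsilon(V(x)+V(y))\I_{\{d(x,y)>0\}}$ in $\rho$ and replacing the dissipativity condition (a3) by the Lyapunov bound (a3$^*$)(i) in the large-distance regime. The case $x=y$ is immediate from $\Pp_{x,x}(X=Y)=1$ and $\rho(x,x)=0$, so fix $x\neq y$ and put $r=d(x,y)>0$, $R=d(X,Y)$. Since $f_1'(r)=c\e^{-cr}>0$, $f_1''(r)=-c^2\e^{-cr}<0$ and $f_1^{(3)}(r)=c^3\e^{-cr}>0$, the same mean value theorem argument as for $f_0$ gives $f_1(R)-f_1(r)\le f_1'(r)(R-r)+\frac{1}{2}f_1''(r)(R-r)^2\I_{\{R<r\}}$ pointwise; the indicator part of $\rho$ contributes $-a\I_{\{R=0\}}$; and, since $(V(X)+V(Y))\I_{\{R>0\}}\le V(X)+V(Y)$ and $\Ee_{x,y}[V(X)]\le(1-\lambda)V(x)+C_0$ (and similarly for $Y$) by (a3$^*$)(i), the Lyapunov part contributes at most $-\epsilon\lambda(V(x)+V(y))+2\epsilon C_0$. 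Taking $\Ee_{x,y}$ and recalling the definitions of $\pi$, $\beta$, $\alpha$ yields the master inequality $$\Ee_{x,y}[\rho(X,Y)]-\rho(x,y)\le -a\pi(x,y)+f_1'(r)\beta(x,y)+f_1''(r)\alpha(x,y)-\epsilon\lambda(V(x)+V(y))+2\epsilon C_0.$$

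Next I would split on the size of $r$. For $r\in(0,r_0]$: discard the nonpositive $f_1''(r)\alpha(x,y)$, bound $\pi(x,y)\ge\underline{\pi}(r)$, $\beta(x,y)\le\overline{\beta}(r)_+$ and $\e^{-cr}\le1$, and insert the defining formula for $a$ to get $-a\pi(x,y)+f_1'(r)\beta(x,y)\le-4\epsilon C_0$. For $r\in(r_0,r_1]$: discard $-a\pi(x,y)$, bound $\beta(x,y)\le\overline{\beta}(r)_+$, $\alpha(x,y)\ge\underline{\alpha}(r)$, use the defining formula for $c$ (which forces $c\overline{\beta}(r)_+\le\frac{1}{2}c^2\underline{\alpha}(r)$) together with $\e^{-cr}\ge\e^{-cr_1}$, and invoke the definition of $\epsilon$ to again obtain $f_1'(r)\beta(x,y)+f_1''(r)\alpha(x,y)\le-4\epsilon C_0$. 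In both regimes the master inequality reduces to $\Ee_{x,y}[\rho(X,Y)]-\rho(x,y)\le-2\epsilon C_0-\epsilon\lambda(V(x)+V(y))$, and since $\rho(x,y)\le(a+1)+\epsilon(V(x)+V(y))$ this is at most $-c_1\rho(x,y)$ with $c_1=\min\{2\epsilon C_0/(a+1),\,\lambda\}$.

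For $r>r_1$ I would use the definition \eqref{eq:r1} of $r_1$: since $d(x,y)>r_1$, neither alternative in that supremum can hold, so $\beta(x,y)<K(V(x)+V(y))$ and $V(x)+V(y)>4C_0/\lambda$. Discarding the nonpositive $-a\pi(x,y)$ and $f_1''(r)\alpha(x,y)$, estimating $f_1'(r)\beta(x,y)\le c\e^{-cr}\beta(x,y)_+\le cK\e^{-cr_1}(V(x)+V(y))$ (crucially using $\e^{-cr}\le\e^{-cr_1}$, not merely $\e^{-cr}\le1$), and bounding $2\epsilon C_0\le\frac{1}{2}\epsilon\lambda(V(x)+V(y))$, the master inequality becomes $\Ee_{x,y}[\rho(X,Y)]-\rho(x,y)\le(cK\e^{-cr_1}-\frac{1}{2}\epsilon\lambda)(V(x)+V(y))$. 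The constants $\epsilon$, $A$, $c$ are calibrated so that $c\ge A+1>A$, which makes $\delta:=\frac{1}{2}\epsilon\lambda-cK\e^{-cr_1}$ strictly positive; combined with $\rho(x,y)\le(\epsilon+(a+1)\lambda/(4C_0))(V(x)+V(y))$ (again using $V(x)+V(y)>4C_0/\lambda$), this gives $\Ee_{x,y}[\rho(X,Y)]-\rho(x,y)\le-c_3\rho(x,y)$ with $c_3=\delta/(\epsilon+(a+1)\lambda/(4C_0))$. Then $c_*:=\min\{c_1,c_3\}\in(0,1)$ works.

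The main difficulty is not the calculus — that is a near-verbatim repeat of Theorem \ref{main-TV} with $f_0$ replaced by $f_1$ — but the correct bookkeeping of the Lyapunov term. In the small- and intermediate-distance regimes one must not discard $-\epsilon\lambda(V(x)+V(y))$ even though it is nonpositive, because $V$ need not be bounded on $\{d(x,y)\le r_1\}$, so $\rho$ cannot be controlled by a constant there; retaining this term is what makes the one-step contraction hold uniformly over $S\times S$. In the large-distance regime, forcing the coefficient of $V(x)+V(y)$ to be negative requires the sharper drift bound $\e^{-cr}\le\e^{-cr_1}$, which is precisely why the constants $\epsilon$, $A$, $c$ carry the $\e^{-cr_1}$ weights and why the lower bound $c\ge A+1$ is imposed; checking that these particular choices render $\delta>0$ and close all three regimes is the delicate part.
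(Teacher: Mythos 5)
Your proposal is correct and follows essentially the same route as the paper: the same master inequality combining the Taylor bound for $f_1$ with the Lyapunov estimate from (a3$^*$)(i), the same three-regime split at $r_0$ and $r_1$, and the same use of the definition \eqref{eq:r1} of $r_1$ together with $c\ge A+1$ to make the coefficient of $V(x)+V(y)$ strictly negative for $d(x,y)>r_1$. The only differences are cosmetic (you merge the paper's constants $c_1,c_2$ into one and package the large-distance comparison with $\rho$ in a single bound rather than the paper's two-term split), and your calibration checks all go through.
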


\begin{proof}
Let $x,y\in S$ with $r=d(x,y)>0$. According to the argument for \eqref{fR-fr}
and (i) in condition (a3$^*$),
\begin{equation}\label{rho}
\Ee_{x,y}[\rho(X,Y)]-\rho(x,y)\le -a\underline{\pi}(r)+f_1'(r)\overline{\beta}(r)+f_1''(r)\underline{\alpha}(r)-\lambda\epsilon(V(x)+V(y))+2\epsilon C_0.
\end{equation}

(1) When $r\in(0,r_0]$, it follows from \eqref{rho} and the definition of the constant $a$
that
$$
\aligned
\Ee_{x,y}[\rho(X,Y)]-\rho(x,y)&\le-a\underline{\pi}(r)+c\e^{-cr}\overline{\beta}(r)-\lambda\epsilon(V(x)+V(y))+2\epsilon C_0\\
&\le-a\underline{\pi}(r)+c\overline{\beta}(r)_+-\lambda\epsilon(V(x)+V(y))+2\epsilon C_0\\
&\le -\frac{a}{2}\underline{\pi}(r)-\lambda\epsilon(V(x)+V(y))\\
&\le -c_1\rho(x,y),
\endaligned
$$
where
$$
c_1=\min\left\{\frac{a}{2(a+1)}\inf_{r\in(0,r_0]}\underline{\pi}(r),\ \lambda\right\}.
$$

(2) When $r\in(r_0,r_1]$, we get from \eqref{rho} and the definitions of the constants $c$
and $\epsilon$
that
$$
\aligned
\Ee_{x,y} [\rho(X,Y)]-\rho(x,y) &\le c\e^{-cr}\overline{\beta}(r)- c^2\e^{-cr}\underline{\alpha}(r)-\lambda\epsilon(V(x)+V(y))+2\epsilon C_0\\
&\le -\frac{1}{2}c^2\e^{-cr}\underline{\alpha}(r)-\lambda\epsilon(V(x)+V(y))+2\epsilon C_0\\
&\le -\frac{1}{4}c^2\e^{-cr}\underline{\alpha}(r)-\lambda\epsilon(V(x)+V(y))\\
&\le  -\frac{1}{4}c^2\e^{-cr_1}\inf_{r\in(r_0,r_1]}\underline{\alpha}(r)-\lambda\epsilon(V(x)+V(y))\\
&\le -c_2\rho(x,y),
\endaligned
$$
where
$$
c_2=\min\left\{ \frac{c^2\e^{-cr_1}\inf_{r\in(r_0,r_1]}\underline{\alpha}(r)}{4(a+1)}, \lambda\right\}.
$$

(3) When $r\in (r_1,\infty)$, by \eqref{rho} and the definition of $r_1$, we get that
$$
\aligned
\Ee_{x,y}[\rho(X,Y)]-\rho(x,y)&\le f_1'(r)\overline{\beta}(r)-\lambda\epsilon(V(x)+V(y))+2\epsilon C_0\\
&\le \left(Kc\e^{-cr_1}-\frac{\epsilon\lambda}{2}\right)(V(x)+V(y))\le -c_3 (V(x)+V(y)),
\endaligned$$
where $$c_3:=\frac{\lambda}{16C_0}c\e^{-cr_1}\Big(\inf_{r\in(r_0,r_1]}\underline{\alpha}(r)\Big)\Big[\sup_{r\in(r_0,r_1]}\frac{2\overline{\beta}(r)_+}{\underline{\alpha}(r)}+1\Big]$$ due to the definitions of constants $A$ and $c$.
This along with the definition of $r_1$ again further yields that
$$
\Ee_{x,y}[\rho(X,Y)]-\rho(x,y)\le -\frac{2C_0c_3}{\lambda}-\frac{c_3}{2}(V(x)+V(y))
\le -c_4\rho(x,y),
$$
where
$$
c_4=\min\left\{\frac{2C_0c_3}{\lambda(a+1)},\  \frac{c_3}{2\epsilon} \right\}.
$$

Combining with all the estimates above, we can obtain the desired assertion with $c_*=\min\{c_1,c_2,c_4\}$.
\end{proof}

\begin{remark}
Similarly to Remark \ref{remark:main-TV} that follows
 Theorem \ref{main-TV}, we would like to point out that in many applications $r_0$ in condition {\rm(a1)} in Assumption {\bf (A)} can be chosen arbitrarily. Hence, if we choose $r_0 = r_1$, where $r_1$ is given by \eqref{eq:r1}, we can simplify the statement of Theorem \ref{second-TV}. Then we have
\begin{equation*}
\epsilon= \frac{1}{8C_0}c^2\e^{-cr_1}, \quad
c=A+1, \quad A=\frac{16KC_0}{\lambda},
\end{equation*}	
whereas $a$ is given by the same formula as above, with $r_0 = r_1$. Hence the constant $c_*$ in the statement of Theorem \ref{second-TV} becomes $c_* = \min \{ c_1, c_4 \}$ with the formulas for $c_1$ and $c_4$ unchanged, but with $c_3 = \frac{\lambda}{16C_0}c\e^{-cr_1}$.
\end{remark}

\subsection{Contraction in terms of the Wasserstein distance}\label{Wasserstein}
  When the Markov coupling $((X,Y),\Pp_{x,y})$ starting from $(x,y)$ can not succeed
  after one step, one may not expect the contraction of $((X,Y),\Pp_{x,y})$ in terms of the total variation. Instead, in this case we will study the contraction in terms of the Wasserstein-type distance, whose associated distance function $\rho(x,y)$ satisfies that $\lim_{d(x,y)\to0} \rho(x,y)=0.$

  Note that the first part of this subsection is similar in spirit to
  \cite[Subsection 2.2]{EM19}. However, in the second part we will expand our approach to cover contractions in the $L^q$-Wasserstein distances for $q > 1$, which have not been considered in \cite{EM19}.

Recall that $\alpha_l(x,y)$ and $\alpha_l(r)$ are defined by \eqref{E:alpha} and \eqref{E:alpha-}, respectively.
We suppose that the following assumption holds:

\noindent{\bf Assumption (B)}: {\it There are a nonnegative $C([0,\infty))\cap C^2((0,\infty))$ function $\Psi$ such that $\Psi(0)=0$, $\Psi'>0$,
$\Psi''\le 0$ and $\Psi''$ is non-increasing	
on $(0,\infty)$, and constants $l_0, r_1, c_0>0$ such that
\begin{itemize}
\item[{\rm(b1)}] $\inf_{r\in(0,r_1]}\frac{\underline{\alpha}_{l_0}(r)}{r}>0${\rm;}

\item[{\rm (b2)}]  $\Big[\sup_{r\in(0,r_1]}\frac{2\overline{\beta}(r)_+}{\Psi'(r+l_0)\underline{\alpha}_{l_0}(r)}+1\Big]\Psi(l_0)\le \log 2${\rm ;}

\item[{\rm (b3)}] $\overline{\beta}(r)\le -c_0r$ for all $r\in (r_1,\infty)$.

\end{itemize}

\rm

Note that we use $\underline{\alpha}_l(r)$ with $l>0$ in Assumption {\bf (B)} rather than $\underline{\alpha}(r):=\underline{\alpha}_0(r)$. Indeed, by the definition of $\underline{\alpha}(r)$, we have $\underline{\alpha}(r)\le r^2/2$ and so (b1) is not satisfied. That is, using $\alpha_l(r)$ with
$l>0$ is crucial in this subsection. Define \begin{equation}\label{2f}
f_3(r)=
\begin{cases}
\int_0^r\e^{-c\Psi(s)}\,ds,\quad &0\le r\le r_1+l_0, \\
f_3(r_1+l_0)+\frac{f_3'(r_1+l_0)}{2}\int_{r_1+l_0}^r\big[1+\exp\big(\frac{2f_3''(r_1+l_0)}{f_3'(r_1+l_0)}(s-(r_1+l_0))\big)\big]\,ds,\quad &r>r_1+l_0,
\end{cases}
\end{equation} where $$
c=\sup_{r\in(0,r_1]}\frac{2\overline{\beta}(r)_+}{\Psi'(r+l_0)\underline{\alpha}_{l_0}(r)}+1.
$$ In particular, there is a constant $\bar c\ge1$ such that for all $r\ge0$,
$$\bar c^{-1}r\le f_3(r)\le \bar cr.$$

\begin{theorem}\label{thm-w1}
Suppose that Assumption {\bf(B)} is satisfied. Let
$
\rho(x,y)=f_3(d(x,y)),
$
where $f_3$ is defined by \eqref{2f}.  Then there exists a constant $c_*\in (0,1)$ such that for all $x,y \in S$,
$$
\Ee_{x,y}[\rho(X,Y)]\le (1-c_*)\rho(x,y).
$$
\end{theorem}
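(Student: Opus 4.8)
The plan is to follow the same three-regime strategy used in the proofs of Theorems \ref{main-TV} and \ref{second-TV}, but now with the concave function $f_3$ from \eqref{2f} playing the role of $f_0$. First I would dispose of the case $x=y$ using the standing assumption $\Pp_{x,x}(X=Y)=1$ and $\rho(x,x)=f_3(0)=0$. For $x\neq y$, write $r=d(x,y)>0$, $R=d(X,Y)$. The key analytic input is that $f_3$ is $C^1$ with $f_3'>0$, $f_3''\le 0$ on $(0,\infty)$, so by Taylor expansion with the appropriate control on the second derivative,
\begin{equation*}
f_3(R)-f_3(r)\le f_3'(r)(R-r)+\frac12\Big(\sup_{\xi\in[r\wedge R,\, r\vee R]}f_3''(\xi)\Big)(R-r)^2.
\end{equation*}
On $[0,r_1+l_0]$ we have $f_3'(s)=\e^{-c\Psi(s)}$, hence $f_3''(s)=-c\Psi'(s)\e^{-c\Psi(s)}$, and since $\Psi'>0$, $\Psi''\le 0$ with $\Psi''$ non-increasing, one checks $f_3'$ is decreasing and $f_3''$ is increasing (i.e. $f_3^{(3)}\ge 0$) on that interval; for $r>r_1+l_0$ the piecewise definition is again concave with $f_3''$ increasing. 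So on the event $\{R<r+l_0\}$ we may bound $\sup_\xi f_3''(\xi)\le f_3''(r\wedge(r+l_0))$-type quantities, and on $\{R\ge r+l_0\}$ we simply drop the (nonpositive) quadratic term. Splitting the quadratic contribution according to whether $R<r+l_0$ or not, taking $\Ee_{x,y}$, and using monotonicity of $f_3',f_3''$ together with the definitions \eqref{E:alpha-} of $\overline\beta(r)$ and $\underline\alpha_{l_0}(r)$, I expect to arrive at an inequality of the shape
\begin{equation*}
\Ee_{x,y}[\rho(X,Y)]-\rho(x,y)\le \overline\beta(r)\,f_3'(r)+\underline\alpha_{l_0}(r)\,f_3''(r+l_0)\quad\text{for }r\in(0,r_1],
\end{equation*}
with a separate (easier) bound for $r>r_1$.

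Next I would treat the two regimes. For $r\in(0,r_1]$: here $f_3'(r)=\e^{-c\Psi(r)}$ and $f_3''(r+l_0)=-c\Psi'(r+l_0)\e^{-c\Psi(r+l_0)}$, so the right-hand side is at most
\begin{equation*}
\e^{-c\Psi(r+l_0)}\Big(\overline\beta(r)\,\e^{c(\Psi(r+l_0)-\Psi(r))}-c\,\Psi'(r+l_0)\,\underline\alpha_{l_0}(r)\Big).
\end{equation*}
Using concavity of $\Psi$, $\Psi(r+l_0)-\Psi(r)\le \Psi(l_0)$ (or rather $\le l_0\Psi'(r)$; one of these, combined with (b2), is the relevant comparison), together with the choice of $c$ so that $c\,\Psi'(r+l_0)\,\underline\alpha_{l_0}(r)\ge 2\overline\beta(r)_+$, the bracket becomes $\le -\tfrac12 c\,\Psi'(r+l_0)\,\underline\alpha_{l_0}(r)$. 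Then (b1) gives $\underline\alpha_{l_0}(r)\ge \delta r$ for some $\delta>0$ on $(0,r_1]$, $\Psi'(r+l_0)$ is bounded below by $\Psi'(r_1+l_0)>0$ there, and $f_3(r)\le \bar c\,r$, so the whole expression is $\le -c_1 f_3(r)=-c_1\rho(x,y)$ for an explicit $c_1>0$ — the role of condition (b2) being precisely to guarantee the exponential prefactors stay bounded (by $\log 2$, hence factor $\le 2$) so that the negative term dominates. For $r\in(r_1,\infty)$: now $f_3'>0$ is bounded and $\overline\beta(r)\le -c_0 r$ by (b3), so $\Ee_{x,y}[\rho(X,Y)]-\rho(x,y)\le -c_0 r f_3'(r)$; since on $(r_1+l_0,\infty)$ the function $f_3$ grows at most linearly (the integrand $1+\exp(\cdots)$ is bounded by $2$) while $f_3'(r_1+l_0)>0$ is a fixed positive number, we get $r f_3'(r)\ge \text{const}\cdot f_3(r)$, yielding $\le -c_3\rho(x,y)$. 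Taking $c_*=\min\{c_1,c_3\}\wedge\tfrac12$ finishes the proof.

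The main obstacle I anticipate is the Taylor-expansion bookkeeping in the first step: unlike the situation in Theorem \ref{main-TV}, where the cut-off in $\alpha_l$ was at level $l=0$ and matched the event $\{R<r\}$ exactly, here the fluctuation quantity $\underline\alpha_{l_0}(r)$ uses the event $\{R<r+l_0\}$, so I must carefully relate $\sup_{\xi\in[r\wedge R,r\vee R]}f_3''(\xi)$ on that event to $f_3''(r+l_0)$ (using that $f_3''$ is non-decreasing there, hence maximized near the right endpoint $r+l_0$ when $R$ can exceed $r$), and simultaneously handle $R\ge r+l_0$ by discarding the concave quadratic term — making sure no positive contribution is lost. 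A secondary subtlety is verifying that $f_3$ as defined piecewise in \eqref{2f} is genuinely $C^1$ at the junction $r_1+l_0$ and that its second derivative there has the claimed sign/monotonicity so that the concavity-based estimates apply uniformly; this is routine but must be stated. Once the correct one-step inequality involving $\overline\beta(r)f_3'(r)+\underline\alpha_{l_0}(r)f_3''(r+l_0)$ is in hand, the rest is a direct computation using Assumption \textbf{(B)} exactly as the constants $c$ and the cut-off $l_0$ were engineered to make it work.
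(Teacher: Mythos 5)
Your proposal follows essentially the same route as the paper's proof: the same Taylor/concavity bound, truncated on the event $\{R<r+l_0\}$ using that $f_3''$ is increasing, yielding $\Ee_{x,y}[f_3(R)]-f_3(r)\le \overline{\beta}(r)f_3'(r)+\underline{\alpha}_{l_0}(r)f_3''(r+l_0)$ on $(0,r_1]$, the same use of (b2) so that $\e^{c(\Psi(r+l_0)-\Psi(r))}\le \e^{c\Psi(l_0)}\le 2$, (b1) for $\underline{\alpha}_{l_0}(r)\ge \delta r$, and the same linear-growth/concavity argument with (b3) for $r>r_1$. The only slight imprecision is an intermediate constant: the choice $c=\sup_{r\in(0,r_1]}\tfrac{2\overline{\beta}(r)_+}{\Psi'(r+l_0)\underline{\alpha}_{l_0}(r)}+1$ gives the bracket $\le -\Psi'(r+l_0)\underline{\alpha}_{l_0}(r)$ (the ``$+1$'' supplying the surplus), rather than $\le -\tfrac12 c\,\Psi'(r+l_0)\underline{\alpha}_{l_0}(r)$ as stated, but this does not affect the conclusion.
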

\begin{proof}
It is clear that $f_3\in C([0,\infty))\cap C^2((0,\infty))$ such that
$$
f_3'(r)=
\begin{cases}
\e^{-c\Psi(r)},\quad &0< r\le r_1+l_0,\\
\frac{f_3'(r_1+l_0)}{2}\big[1+\exp\big(\frac{2f_3''(r_1+l_0)}{f_3'(r_1+l_0)}(r-(r_1+l_0))\big)\big],\quad &r>r_1+l_0,
\end{cases}
$$ and
$$
f_3''(r)=
\begin{cases}
-c\Psi'(r)\e^{-c\Psi(r)},\quad &0< r\le r_1+l_0,\\
f_3''(r_1+l_0)\exp\big(\frac{2f_3''(r_1+l_0)}{f_3'(r_1+l_0)}(r-(r_1+l_0))\big),\quad &r>r_1+l_0.
\end{cases}
$$ In particular, for all $r\ge 0$,
$$\frac{f_3'(r_1+l_0)r}{2}\le f_3(r)\le \max\left\{1, \frac{f_3(r_1+l_0)}{r_1+l_0}\right\}r,$$ and
$f_3'>0$, $f_3''\le 0$ and $f_3''$ is increasing on $(0,\infty)$.

Let $x,y\in S$ with $r=d(x,y)>0$. If $r\in (0,r_1]$, then, by the definitions of $f_3$, $c$ and (b1)--(b2),
$$
\aligned
\Ee_{x,y}[f_3(R)-f_3(r)]&\le f_3'(r)\overline{\beta}(r)+ f_3''(r+l_0)\underline{\alpha}_{l_0}(r)\\
&=\e^{-c\Psi(r)}[\overline{\beta}(r)- c\Psi'(r+l_0)\e^{-c(\Psi(r+l_0)-\Psi(r))}\underline{\alpha}_{l_0}(r)]\\
&\le \e^{-c\Psi(r)}[\overline{\beta}(r)
-2\e^{-c(\Psi(r+l_0)-\Psi(r))}\overline{\beta}(r)_+
-\Psi'(r+l_0)\e^{-c(\Psi(r+l_0)-\Psi(r))}\underline{\alpha}_{l_0}(r)]\\
&\le -\Psi'(r+l_0)\e^{-c\Psi(r+l_0)}\frac{\underline{\alpha}_{l_0}(r)}{r}r\le
-c_1f_3(r),
\endaligned
$$
where the third
 inequality holds true since $\sup_{r\in (0,r_1]}(\Psi(r+l_0)-\Psi(r))\le \Psi(l_0)$, due to
 the fact that $\Psi(0)=0$
 and
 $\Psi''\le0$,
and hence, by (b2) and the definition of $c$,
$$
\overline{\beta}(r)
-2\e^{-c(\Psi(r+l_0)-\Psi(r))}\overline{\beta}(r)_+
\le \overline{\beta}(r)
-2\e^{-c\Psi(l_0)}\overline{\beta}(r)_+
\le \overline{\beta}(r)-\overline{\beta}(r)_+\le 0,
$$
whereas in the last inequality  $$
c_1=\Psi'(r_1+l_0)e^{-c\Psi(r_1+l_0)}\inf_{r\in(0,r_1]}\frac{\underline{\alpha}_{l_0}(r)}{r}>0,
$$
thanks to (b1) and $\Psi'>0$ on $(0,\infty)$.

If $r\in (r_1,\infty)$, then, by (b3),
$$\Ee_{x,y}[f_3(R)-f_3(r)]\le f_3'(r)\overline{\beta}(r)\le -c_0 f_3'(r_1+l_0)r/2\le -c_2f_3(r),$$
where $$c_2:=\frac{c_0 }{2}\e^{-c(r_1+l_0)}\max\left\{1, \frac{f_3(r_1+l_0)}{r_1+l_0}\right\}^{-1}.$$

The proof is completed by taking $c_*=c_1\wedge c_2.$
\end{proof}

\begin{remark}\rm
Take $\Psi(r)=r$, then (b2) in Assumption {\bf(B)}  becomes
\begin{itemize}
\item[{\rm(b2$^*$)}] $\Big[\sup_{r\in(0,r_1]}\frac{2\overline{\beta}(r)_+}{\underline{\alpha}_{l_0}(r)}+1\Big]l_0\le \log 2$.
\end{itemize} With this special choice,  (b1) and (b2$^*$) are
equivalent to
\cite[(B1) and (B2) in Subsection 2.2]{EM19}.
\end{remark}

\rm

In the following, we will consider the contraction in terms of the $L^q$-Wasserstein distance with $q>1$. For this, we assume that there is a constant  $l>0$ such that the Markov coupling $((X,Y),\Pp_{x,y})$ satisfies
 $d(X,Y)\le d(x,y)+l$ for all $x$, $y \in S$.
 An example of such coupling will be used in Section \ref{section3}, see \eqref{basic-reflec1} and \eqref{basic-reflec2} therein.
Let $\Psi$, $l_0$, $r_1$ and $c_0$ be given in Assumption {\bf(B)}. We now define \begin{equation}\label{4fff}f_4(r)=
\begin{cases}
\int_0^r\e^{-c\Psi(s)}\,ds,\quad &0\le r\le r_1+l_0, \\
f_4(r_1+l_0)+A(r-(r_1+l_0))^p+ \frac{1}{c}\e^{-c\Psi(r_1+l_0)}(1-\e^{-c(r-(r_1+l_0))}),\quad &r>r_1+l_0,
\end{cases}\end{equation} where $$
c=\sup_{r\in(0,r_1]}\frac{2\overline{\beta}(r)_+}{\Psi'(r+l_0)\underline{\alpha}_{l_0}(r)}+1,\quad  p>2, \quad A=(p(p-1))^{-1}(k(r_1+l_0))^{2-p}c e^{-c(\Psi(r_1+l_0)+k(r_1+l_0))}$$ and $$r_1\ge l+1,\quad k\ge 1+\max\left\{2le^{cl}/(c_0(r_1+l_0)),4^pl/(c_0(r_1+l_0))\right\}.$$
We can check that $f_4\in C([0,\infty))\cap C^2((0,\infty))$ such that $f_4'>0$ on $(0,\infty)$, $f_4''<0$ on $(0,(k+1)(r_1+l_0))$, $f_4''((k+1)(r_1+l_0))=0$ and $f_4''$ is increasing on $(0,\infty)$. Moreover, there is a constant $\bar c\ge1$ such that for all $r>0$,
$$\bar c^{-1}(r\vee r^p)\le f_4(r)\le \bar c (r\vee r^p).$$ In particular, for any $\theta\in [1,p]$, there is a constant $c_\theta>0$ such that
for all $r>0$,
$$c_\theta r^\theta\le f_4(r).$$ Thus, with the choice of $f_4(r)$ defined by \eqref{4fff}, one may consider the contraction of the Markov coupling $((X,Y),\Pp_{x,y})$ in the $L^q$-Wasserstein distance with $q\in [1,p]$.

\begin{theorem}\label{thm-wq}
Suppose that Assumption {\bf(B)} is satisfied, and there is a constant  $l>0$
such
that the Markov coupling $((X,Y),\Pp_{x,y})$ satisfies
$d(X,Y)\le d(x,y)+l$ for all $x$, $y \in S$. Let $\rho(x,y)=f_4(d(x,y))$, where $f_4$ is defined by \eqref{4fff}. Then there exists a constant $c_*\in (0,1)$ such that for all $x,y \in S$,
$$
\Ee_{x,y}[\rho(X,Y)]\le (1-c_*)\rho(x,y).
$$
\end{theorem}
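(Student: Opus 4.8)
The plan is to adapt the case-by-case scheme used in the proof of Theorem \ref{thm-w1}, splitting according to the size of $r:=d(x,y)$ and writing $R:=d(X,Y)$ (so $0\le R\le r+l$ by hypothesis). As there, the basic tool is the second-order estimate
\[
f_4(R)-f_4(r)\le f_4'(r)(R-r)+\tfrac12 f_4''(\xi)(R-r)^2,\qquad \xi\in[r\wedge R,\ r\vee R],
\]
together with the monotonicity of $f_4''$: since $\xi\le r\vee R\le r+l$ one has $f_4''(\xi)\le f_4''(r+l)$, while on the sub-event $\{R<r+l_0\}$ one even has $\xi\le r+l_0$, hence $f_4''(\xi)\le f_4''(r+l_0)$. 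I would introduce $r_*:=(k+1)(r_1+l_0)$, the point where $f_4''$ changes sign ($f_4''<0$ on $(0,r_*)$, $f_4''(r_*)=0$, $f_4''>0$ afterwards), and split into (i) $r\in(0,r_1]$; (ii) $r\in(r_1,\,r_*-l]$; (iii) $r\in(r_*-l,\infty)$; then set $c_*$ to be the minimum of the constants produced in the three cases.

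In case (i) the computation is essentially a copy of the corresponding step of Theorem \ref{thm-w1}. Since $r_1\ge l+1$ and $k\ge 1$ force $r+l\le r_1+l<r_*$, we have $f_4''\le 0$ on $[0,r+l]$; dropping the non-positive quadratic term on $\{R\ge r+l_0\}$, keeping it on $\{R<r+l_0\}$, and using $f_4'(r)=\e^{-c\Psi(r)}$, $f_4''(r+l_0)=-c\Psi'(r+l_0)\e^{-c\Psi(r+l_0)}$ for $r\le r_1$, gives
\[
\Ee_{x,y}[f_4(R)-f_4(r)]\le \e^{-c\Psi(r)}\big[\overline{\beta}(r)-c\,\Psi'(r+l_0)\,\e^{-c(\Psi(r+l_0)-\Psi(r))}\,\underline{\alpha}_{l_0}(r)\big],
\]
which, exactly as in Theorem \ref{thm-w1}, yields $\Ee_{x,y}[f_4(R)]\le(1-c_1)f_4(r)$ with $c_1>0$ via (b1), (b2), the choice of $c$, and $\Psi(r+l_0)-\Psi(r)\le\Psi(l_0)$. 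In case (ii) we again have $r+l\le r_*$, hence $f_4''\le 0$ on $[0,r+l]$; dropping the whole quadratic term and taking expectations gives $\Ee_{x,y}[f_4(R)-f_4(r)]\le f_4'(r)\,\beta(x,y)\le -c_0\,r\,f_4'(r)$ by (b3), and since $r\mapsto r f_4'(r)/f_4(r)$ is continuous and strictly positive it is bounded below by some $\kappa>0$ on the compact interval $[r_1,r_*-l]$, so $\Ee_{x,y}[f_4(R)]\le(1-c_0\kappa)f_4(r)$.

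Case (iii) is the real difficulty: $R\le r+l$ may now land where $f_4$ is convex, and the naive bound $(R-r)^2\le(\text{const})r^2$ on $\{R\le r\}$ is far too lossy, contributing a term comparable to $f_4(r)$ itself (with the wrong sign when $c_0$ is small). Instead I would argue directly with the explicit form $f_4(s)=f_4(r_1+l_0)+A(s-(r_1+l_0))^p+\tfrac1c\e^{-c\Psi(r_1+l_0)}(1-\e^{-c(s-(r_1+l_0))})$ for $s\ge r_1+l_0$, and with the drift identity $\Ee_{x,y}[R]=r+\beta(x,y)\le(1-c_0)r$ (note $c_0\le 1$ since $\beta(x,y)\ge -r$). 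For $r$ large enough that the term $A(r-(r_1+l_0))^p$ dominates, one bounds $f_4(R)\le A(R-(r_1+l_0))_+^p+D_0$ with $D_0$ a fixed constant, uses $A\,t^p\le A(\tilde r+l)^{p-1}t$ for $0\le t\le\tilde r+l$ with $\tilde r:=r-(r_1+l_0)$, and $\Ee_{x,y}[(R-(r_1+l_0))_+]\le\Ee_{x,y}[R]\le(1-c_0)r$, obtaining $\Ee_{x,y}[f_4(R)]\le D_0+A(\tilde r+l)^{p-1}(1-c_0)r$; dividing by $f_4(r)\ge A\tilde r^p$ gives a ratio tending to $1-c_0<1$ as $\tilde r\to\infty$. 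For the remaining, "flat" part of $f_4$ just beyond $r_*$ (where $A\tilde r^p$ is not yet dominant) one retains a small but positive decrease from the drift: $\Ee_{x,y}[(r-R)_+]\ge -\beta(x,y)\ge c_0 r$ gives $\Ee_{x,y}[(f_4(R)-f_4(r))\I_{\{R\le r\}}]\le -(\min_{[0,r]}f_4')\,c_0 r$, which is balanced against the increase on $\{R>r\}$, at most $f_4'(r+l)\,l$.

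The main obstacle is precisely this case (iii): because $f_4$ passes from concave to convex and has a long nearly flat stretch, the second-order expansion breaks down and one must track constants carefully. This is exactly what the lower bounds imposed on $k$ (namely $k\ge 1+\max\{2l\e^{cl}/(c_0(r_1+l_0)),\ 4^p l/(c_0(r_1+l_0))\}$) and on $r_1$ ($r_1\ge l+1$) are for: they make $r_*-l$ large relative to $l/c_0$ and calibrate the size of $A(k(r_1+l_0))^p$, so that the two estimates above overlap and together cover all of $(r_*-l,\infty)$, each producing $\Ee_{x,y}[f_4(R)]\le(1-c_3)f_4(r)$ for some fixed $c_3>0$ (possibly tiny, but positive, which suffices). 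Taking $c_*:=\min\{c_1,\,c_0\kappa,\,c_3\}\in(0,1)$ then completes the argument.
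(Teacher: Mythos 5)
Your cases (i) and (ii) coincide with the paper's proof (the second-order bound with $f_4''(r+l_0)$ for $r\le r_1$, and dropping the quadratic term on $(r_1,(k+1)(r_1+l_0)-l]$ where $f_4''\le 0$), so the issue is case (iii), and there the argument does not close: your two sub-estimates do not overlap under the constants fixed in \eqref{4fff}. The drift-vs-increase estimate $\Ee_{x,y}[f_4(R)-f_4(r)]\le -c_0\,f_4'((k+1)(r_1+l_0))\,r+l\,\bigl(f_4'(r)\vee f_4'(r+l)\bigr)$ is proportionally negative only while $l\,f_4'(r+l)\le \tfrac{c_0}{2}f_4'((k+1)(r_1+l_0))\,r$; since $f_4'(r+l)\sim pA\,r^{p-1}$ with $p>2$ while the guaranteed decrease grows only linearly in $r$, this fails beyond $r$ of order $\bigl(c_0(k(r_1+l_0))^{p-1}/l\bigr)^{1/(p-2)}$, i.e.\ beyond a fixed polynomial multiple of $k(r_1+l_0)$ — which is exactly why the paper restricts this argument, via \eqref{kr1-4kr1}, to $r\le 4k(r_1+l_0)$. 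Your polynomial-domination estimate, on the other hand, compares $D_0+A(\tilde r+l)^{p-1}(1-c_0)r$ with the lower bound $f_4(r)\ge A\tilde r^{p}$, and this yields a ratio bounded away from $1$ only once $D_0\lesssim c_0A\tilde r^{p}$ (besides $\tilde r\gtrsim (r_1+l_0+pl)/c_0$, a loss you incur by bounding $\Ee_{x,y}[(R-(r_1+l_0))_+]$ by $\Ee_{x,y}[R]$). But $A$ carries the factor $\e^{-c(\Psi(r_1+l_0)+k(r_1+l_0))}$ while $D_0\ge f_4(r_1+l_0)$ is a fixed constant, so this threshold is exponentially large in $c\,k(r_1+l_0)$. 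Hence there is a long middle range of $r$ — roughly from a constant multiple of $k(r_1+l_0)$ up to order $\e^{ck(r_1+l_0)/p}$ — on which neither of your estimates gives a contraction; the stated lower bounds on $k$ and $r_1$ do not produce the overlap you assert, because they are calibrated for a different scheme.

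What the paper does on $(4k(r_1+l_0),\infty)$ is structurally different: it splits on $\{R\le r/2\}$, $\{r/2<R\le r\}$, $\{r<R\le r+l\}$ and bounds the loss by $\tfrac12 f_4'(r/2)(R-r)$, so after taking expectations the guaranteed decrease is $\tfrac{c_0}{2}f_4'(r/2)\,r$, which is itself of order $A r^{p}$ and hence comparable to $f_4(r)$; the possible increase $l\,f_4'(r+l)$ is then absorbed using the same lower bound on $k$. This never requires the additive constants in $f_4$ to be dominated by $A\tilde r^{p}$, which is precisely where your version breaks. To repair your case (iii) you would either need such a decrease proportional to $f_4'(r/2)r$ (rather than to $r$ times a fixed derivative value), or redo the asymptotic estimate by comparing $f_4(R)-f_4(r)$ through the explicit formula so that $f_4(r_1+l_0)$ cancels and only the exponentially small remainder $\tfrac1c\e^{-c\Psi(r_1+l_0)}\e^{-c\tilde r}$ survives — and even then the $(r_1+l_0)/c_0$ threshold coming from $\Ee_{x,y}[(R-(r_1+l_0))_+]\le\Ee_{x,y}[R]$ is not covered by the range where your drift-vs-increase bound works, since $r_1+l_0$ may be arbitrarily large relative to $l$. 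As written, the crux of the proof is not established.
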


\begin{proof}
Let $x,y\in S$ with $r=d(x,y)>0$. Suppose that $r\in (0,r_1]$. Then, by using
the fact that $f_4''$ is increasing,
and following the arguments
in the proof of
Theorem \ref{thm-w1}, we can obtain that
$$
\aligned
\Ee_{x,y}[f_4(R)-f_4(r)]
&\le f_4'(r)\overline{\beta}(r)+ f_4''(r+l_0)\underline{\alpha}_{l_0}(r)\\
&=\e^{-c\Psi(r)}[\overline{\beta}(r)- c\Psi'(r+l_0)\e^{-c(\Psi(r+l_0)-\Psi(r))}\underline{\alpha}_{l_0}(r)]\\
&\le -c_1f_4(r),
\endaligned
$$ where $$
c_1=\Psi'(r_1+l_0)e^{-c\Psi(r_1+l_0)}\inf_{r\in(0,r_1]}\frac{\underline{\alpha}_{l_0}(r)}{r}>0.
$$

Suppose that $r\in(r_1,(k+1)(r_1+l_0)-l]$. By (b3) and $f_4''<0$ on $(0,(k+1)(r_1+l_0)]$, we get
$$
\Ee_{x,y}[f_4(R)-f_4(r)]
\le f_4'(r)\overline{\beta}(r)
\le -c_0f_4'(r)r\le -c_2f_4(r)
$$
for some $c_2\in(0,1)$.

Now consider the case
where
$r\in ((k+1)(r_1+l_0)-l, 4k(r_1+l_0)]$. It follows from the properties of the Markov coupling $((X,Y),\Pp_{x,y})$ and the function $f_4$ that
$$
\aligned
f_4(R)-f_4(r)&=(f_4(R)-f_4(r))\I_{\{R\le r\}}+(f_4(R)-f_4(r))\I_{\{r<R\le r+l\}}\\
&\le \left[\inf_{\xi\in(R,r)}f_4'(\xi)\right](R-r)\I_{\{R\le r\}}+\left[\sup_{\xi\in(r,R)}f_4'(\xi)\right](R-r)\I_{\{r<R\le r+l\}}\\
&\le f_4'((k+1)(r_1+l_0))(R-r)\I_{\{R\le r\}}+[f_4'(r)\vee f_4'(r+l)](R-r)\I_{\{r<R\le r+l\}}\\
&\le f_4'((k+1)(r_1+l_0))(R-r)\\
&\quad+\left[f_4'(r)\vee f_4'(r+l)-f_4'((k+1)(r_1+l_0))\right](R-r)\I_{\{r<R\le r+l\}}.
\endaligned
$$
Taking expectation and (b3) give us that
$$
\aligned
\Ee_{x,y}[f_4(R)-f_4(r)]&\le -c_0f_4'((k+1)(r_1+l_0))r+l[f_4'(r)\vee f_4'(r+l)-f_4'((k+1)(r_1+l_0))]\\
&\le -\frac{c_0}{2}f_4'((k+1)(r_1+l_0))r\le -c_3f_4(r)
\endaligned
$$
for some $c_3\in(0,1)$. Here we used the fact that for all $r\in ((k+1)(r_1+l_0)-l, 4k(r_1+l_0)]$,
\begin{equation}\label{kr1-4kr1}
\frac{c_0}{2}f_4'((k+1)(r_1+l_0))r\ge l(f_4'(r)\vee f_4'(r+l)-f_4'((k+1)(r_1+l_0))).
\end{equation}
Indeed, by the definition of $f_4$, \eqref{kr1-4kr1} is a consequence of $$
 \frac{c_0}{2}\e^{-c(k+1)(r_1+l_0)}r\ge le^{-cr},\quad \frac{c_0}{2}(k(r_1+l_0))^{p-1}r\ge  l(r+l-(r_1+l_0))^{p-1}
$$ for all $r\in ((k+1)(r_1+l_0)-l, 4k(r_1+l_0)]$, thanks to the fact that $k\ge \max\{2l\e^{cl}/(c_0(r_1+l_0)),4^pl/(c_0(r_1+l_0))\}$.

Finally, suppose that $r\in (4k(r_1+l_0),\infty)$. Since $f_4''>0$ on $((k+1)(r_1+l_0),\infty)$,
\begin{align*}
f_4(R)-f_4(r)&=(f_4(R)-f_4(r))\I_{\{R\le r/2\}}+(f_4(R)-f_4(r))\I_{\{ r/2<R\le r\}}\\
&\quad +(f_4(R)-f_4(r))\I_{\{r<R\le r+l\}}\\
&\le (f_4(r/2)-f_4(r))\I_{\{R\le r/2\}}+\left[\inf_{\xi\in(R,r)}
f_4'(\xi)\right](R-r)\I_{\{ r/2<R\le r\}}\\
&\quad+\left[\sup_{\xi\in(r,R)}f'_4(\xi)\right](R-r)\I_{\{r<R\le r+l\}}\\
&\le \left[\inf_{\xi\in(r/2,r)}f'_4(\xi)\right](r/2-r)\I_{\{R\le r/2\}}+f'_4(r/2)(R-r)\I_{\{r/2<R\le r\}}\\
&\quad +f'_4(r+l)(R-r)\I_{\{r<R\le r+l\}}\\
&\le f'_4(r/2)(r/2-r)\I_{\{R\le r/2\}}+f'_4(r/2)(R-r)\I_{\{ r/2<R\le r\}}\\
&\quad +f'_4(r+l)(R-r)\I_{\{r<R\le r+l\}}\\
&\le \frac{1}{2}f'_4(r/2)(R-r)\I_{\{R\le r\}}+f'_4(r+l)(R-r)\I_{\{r<R\le r+l\}}\\
&\le \frac{1}{2}f'_4(r/2)(R-r)+\Big(f'_4(r+l)-\frac{1}{2}f'_4(r/2)\Big)(R-r)\I_{\{r<R\le r+l\}}.
\end{align*}
Thus, by (b3) we have
$$
\Ee_{x,y}[f_4(R)-f_4(r)]\le -\frac{c_0}{2}f'_4(r/2)r+l\left(f'_4(r+l)-\frac{1}{2}f'_4(r/2)\right)
\le -\frac{c_0}{4}f'_4(r/2)r\le -c_4f_4(r)
$$
for some $c_4\in(0,1)$. Here we used the fact that
$$
\frac{c_0}{4}f'_4(r/2)r\ge l\left(f'_4(r+l)-\frac{1}{2}f'_4(r/2)\right),\quad r> 4k(r_1+l_0),
$$ which follows from
$$
\frac{c_0}{4}\e^{-c r/2 }r\ge l \e^{-cr},\quad \frac{c_0}{4}(r/2-(r_1+l_0))^{p-1}r> lr^{p-1}  \quad \text{for all } r>4k (r_1+l_0)
$$ and the fact that $k\ge \max\{2l\e^l/(c_0(r_1+l_0)), l4^p/(c_0(r_1+l_0))\}.$

Combining all the estimates above, we prove the desired assertion.
\end{proof}

\section{Applications: Euler Schemes of SDEs}\label{section3}
There are
numerous
works devoted to Euler discretizations of the following stochastic differential equation (SDE)
\begin{equation}\label{e:SDE1}
dX_t=b(X_t)\,dt+dZ_t,
\end{equation}
where $b:\rd\rightarrow\rd$ is Lipschitz continuous, and $(Z_t)_{t\ge0}$ is a stochastic process on $\rd$. In particular, when $(Z_t)_{t\ge0}$ is a rotationally invariant symmetric $\alpha$-stable process with $\alpha \in (0,2]$ (when $\alpha=2$, $(Z_t)_{t\ge0}$ is a Brownian motion), the transitions of the Markov chain for the Euler schemes corresponding to the SDE \eqref{e:SDE1} with step size $h>0$ are given by
\begin{equation}\label{e:SDE2}
x\mapsto x+hb(x)+ h^{1/\alpha}\xi, \quad x\in \R^d,
\end{equation} where $\xi$ is a random variable with $\alpha$-stable law (when $\alpha=2$, $\xi$ is a random variable with normal distribution).
The purpose of this section is to apply
the
results in the previous section to study the strict contraction of Markov chains including the system \eqref{e:SDE2}.

Let $h>0$, $g$ be a continuous and strictly increasing function on $[0,\infty)$ with $g(0)=0$, and  $\xi$ be a random variable whose distribution is given by $\mu$. We will consider the Markov chain $X$ on $\R^d$ whose transitions are of the following form:
\begin{equation}\label{eq:chain2}
x\mapsto x+hb(x)+g(h)\xi,\quad x\in \R^d.
\end{equation}
Throughout this section, we always assume that the coefficient $b(x)$ satisfies the following assumption.

\noindent{\bf Assumption (C):}{\it
\begin{itemize}
\item[{\rm (c1)}] There is a constant $L\ge0$ such that $|b(x)-b(y)|\le L|x-y|$ for $x,y\in\rd${\rm;}

\item[{\rm (c2)}] There are constants $K>0$ and $\mathcal R\ge0$ such that $\langle x-y, b(x)-b(y)\rangle\le -K|x-y|^2$ for $x,y\in \R^d$ with $|x-y|\ge\mathcal{R}$.

\end{itemize}}
Note that it must hold that $K\le L$. We will construct three explicit Markov couplings of the chain $X$ according to different conditions on the distribution $\mu$ of the random variable $\xi$.

\subsection{General case}\label{app-add}
For any $x\in \R^d$ and $\kappa>0$, set
\begin{equation}\label{mu_x}
(x)_\kappa:=\Big(1\wedge\frac{\kappa}{|x|}\Big)x,\quad \mu_{x}(dz):=\big(\mu\wedge(\delta_x\ast \mu)\big)(dz).
\end{equation} In this part, we suppose that the following condition holds for the measure $\mu$.
\begin{itemize}\it
\item[{\rm (c3)}] There is a constant $\kappa_0>0$ such that \begin{equation}\label{kappa}
J_{\kappa_0}:=\inf_{|z|\le \kappa_0}\big(\mu\wedge(\delta_{z}\ast \mu)\big)(\rd)>0.
\end{equation}
\end{itemize}}

Write $\hat x=\hat x_h:=x+hb(x)$. We will adopt the following Markov coupling $((X,Y),\Pp_{x,y})$ of the chain $X$:
\begin{equation}\label{ref-basic}
\begin{cases}
X=\,\,\hat{x}+g(h)z, \qquad\qquad\qquad \qquad\qquad\,\,\,\, \mu(dz);\\
Y=
\begin{cases}
\hat{y}+g(h)(z+g(h)^{-1}(\hat{x}-\hat{y})_\kappa),&\quad \frac{1}{2}\mu_{g(h)^{-1}(\hat{y}-\hat{x})_\kappa}(dz),  \\
\hat{y}+g(h)(z+g(h)^{-1}(\hat{y}-\hat{x})_\kappa),&\quad  \frac{1}{2}\mu_{g(h)^{-1}(\hat{x}-\hat{y})_\kappa}(dz),\\
\hat{y}+g(h)z,&\quad \mu(dz)-\frac{1}{2}\mu_{g(h)^{-1}(\hat{y}-\hat{x})_\kappa}(dz)-\frac{1}{2}\mu_{g(h)^{-1}(\hat{x}-\hat{y})_\kappa}(dz).
\end{cases}\end{cases}
\end{equation} To check that $((X,Y),\Pp_{x,y})$ is a Markov coupling, we apply the fact that
$(\delta_{-x}*\mu_{x})(dz)=\mu_{-x}(dz)$ for all $ x\in \R^d$ (see Lemma \ref{coup-1} in the appendix of this paper).
 In particular, this implies that \begin{equation}\label{e:mea}\mu_{x}(\R^d)=\mu_{-x}(\R^d).\end{equation} Indeed, the coupling above is motivated by the refined basic coupling for SDEs with additive L\'evy noises introduced in \cite{LW19}.
  Here we
 adapt it to Markov chains of the form \eqref{eq:chain2}.
Note that the parameter $\kappa$ in \eqref{mu_x} is introduced so that the coupling \eqref{ref-basic} has a positive probability of being successful even when the jump distribution $\mu$ has finite support. For distribution
 $\mu$ with full support, the value of $\kappa$ can be chosen arbitrarily, in particular taking $\kappa = \infty$ gives $(x)_{\kappa} = x$. An extended discussion on the construction of \eqref{ref-basic} can be found in \cite[Section 2.1]{LW19}.

Recall that, for $x,y\in\R^d$,
$$
\beta(x,y)=\Ee_{x,y}[R-r],\quad \alpha(x,y)=\frac{1}{2}\Ee[(R-r)^2\I_{\{R< r\}}],\quad \pi(x,y)=\Pp_{x,y}(R=0),
$$ where $r=|x-y|$ and $R=|X-Y|$. We further set $\hat{r}=|\hat{x}-\hat{y}|$.

\begin{lemma}\label{beta}
Under Assumption {\bf(C)} and condition {\rm(c3)}, we have
\begin{itemize}
\item[{\rm(i)}]
$\beta(x,y)\le hL |x-y|$ for any $x,y\in\rd${\rm;}

\item[{\rm(ii)}]
$\beta(x,y)\le -(K-hL^2/2)h|x-y|$ for any $x,y\in \R^d$ with $|x-y|\ge \mathcal{R}$ and any $h\le 2KL^{-2}${\rm;}

\item[\rm{(iii)}] $\pi(x,y)\ge \frac{1}{2}J_{g(h)^{-1}\kappa}\I_{\{\hat{r}\le \kappa\}}$ for any $x,y\in \R^d$ and $0<\kappa\le g(h)\kappa_0$. In particular, for any $x,y\in \R^d$ with $|x-y|\le \kappa/(1+hL)$ and $0<\kappa\le g(h)\kappa_0$,
    $$\pi(x,y)\ge \frac{1}{2}J_{g(h)^{-1}\kappa}>0 ;$$

\item[\rm{(iv)}]$
\alpha(x,y)\ge \frac{1}{4}J_{g(h)^{-1}\kappa}\big((\hat{r}-\hat{r}\wedge\kappa)-r\big)^2\I_{\{\hat{r}-\hat{r}\wedge\kappa<r\}}
$ for any $x,y\in \R^d$ and $0<\kappa\le g(h)\kappa_0$.
In particular, for any $x,y\in \R^d$ with $|x-y|\le \mathcal{R}$, $0<\kappa\le g(h)\kappa_0$ and any $0<h\le \kappa(2L\mathcal{R})^{-1}$,
$$
\alpha(x,y)\ge\bigg(\frac{r^2}{4}\wedge\frac{\kappa^2}{16}\bigg) J_{g(h)^{-1}\kappa}>0.
$$

\end{itemize}
\end{lemma}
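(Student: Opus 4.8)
The plan is to read off the value of $R=|X-Y|$ in each of the three branches of the coupling \eqref{ref-basic} and then evaluate $\beta$, $\pi$ and $\alpha$ branch by branch. Since $X=\hat x+g(h)z$, in the first branch $X-Y=(\hat x-\hat y)-(\hat x-\hat y)_\kappa$, so $R=\hat r-\hat r\wedge\kappa$; in the second $X-Y=(\hat x-\hat y)+(\hat x-\hat y)_\kappa$, so $R=\hat r+\hat r\wedge\kappa$; and in the third $X-Y=\hat x-\hat y$, so $R=\hat r$. The first two branches carry equal probabilities $p:=\tfrac12\mu_{g(h)^{-1}(\hat y-\hat x)_\kappa}(\R^d)=\tfrac12\mu_{g(h)^{-1}(\hat x-\hat y)_\kappa}(\R^d)$, by \eqref{e:mea} (which in turn follows from Lemma \ref{coup-1}), and the third has probability $1-2p$. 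Throughout I will use $|\hat x-\hat y|\le|x-y|+h|b(x)-b(y)|\le(1+hL)|x-y|$ from (c1), and that $\rho\mapsto J_\rho:=\inf_{|z|\le\rho}(\mu\wedge(\delta_z\ast\mu))(\R^d)$ is non-increasing, so that $J_{g(h)^{-1}\kappa}\ge J_{\kappa_0}>0$ whenever $\kappa\le g(h)\kappa_0$, by (c3).

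For (i) and (ii): summing the three contributions to $\Ee_{x,y}[R]$, the $\pm\,\hat r\wedge\kappa$ terms of the first two branches cancel (this is exactly where $p_1=p_2$ enters), yielding the exact identity $\beta(x,y)=\Ee_{x,y}[R]-r=\hat r-r=|\hat x-\hat y|-|x-y|$. Then (i) is immediate from (c1). For (ii) I would expand $|\hat x-\hat y|^2=|x-y|^2+2h\langle x-y,b(x)-b(y)\rangle+h^2|b(x)-b(y)|^2$ and bound it via (c1)--(c2) by $r^2(1-2hK+h^2L^2)$ when $r\ge\mathcal R$; since $K\le L$ this factor equals $(1-hL)^2+2h(L-K)\ge0$, so $\hat r\le r(1-hK+\tfrac12 h^2L^2)$ by $\sqrt{1-t}\le1-\tfrac t2$, hence $\beta\le-(K-\tfrac12 hL^2)h\,r$; the constraint $h\le2KL^{-2}$ is precisely what keeps $K-\tfrac12 hL^2\ge0$.

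For (iii) and (iv): among the three branches only the first can produce $R=0$, and only when $\hat r\le\kappa$ (the other two give $R\ge\hat r$); hence $\pi(x,y)\ge p\,\I_{\{\hat r\le\kappa\}}$, and since $g(h)^{-1}(\hat y-\hat x)_\kappa$ has norm $g(h)^{-1}(\hat r\wedge\kappa)\le g(h)^{-1}\kappa$ we get $2p\ge J_{g(h)^{-1}\kappa}$, giving (iii); the ``in particular'' part follows because $|x-y|\le\kappa/(1+hL)$ forces $\hat r\le(1+hL)|x-y|\le\kappa$. For (iv), discarding the nonnegative contributions of branches two and three in $\alpha(x,y)=\tfrac12\Ee_{x,y}[(R-r)^2\I_{\{R<r\}}]$ and keeping only the first branch yields $\alpha(x,y)\ge\tfrac14 J_{g(h)^{-1}\kappa}\big((\hat r-\hat r\wedge\kappa)-r\big)^2\I_{\{\hat r-\hat r\wedge\kappa<r\}}$, which is the stated inequality. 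For the ``in particular'' part, set $s:=\hat r-\hat r\wedge\kappa=(\hat r-\kappa)_+$ and note that $|\hat r-r|\le hL|x-y|\le hL\mathcal R\le\kappa/2$ when $h\le\kappa(2L\mathcal R)^{-1}$: if $\hat r\le\kappa$ then $s=0$, so $\I_{\{s<r\}}=1$ and $(s-r)^2=r^2$; if $\hat r>\kappa$ then $s=\hat r-\kappa$ and $r-s=\kappa-(\hat r-r)\ge\kappa/2>0$, so again $\I_{\{s<r\}}=1$ and $(s-r)^2\ge\kappa^2/4$. In both cases $(s-r)^2\I_{\{s<r\}}\ge r^2\wedge\tfrac{\kappa^2}{4}$, and multiplying by $\tfrac14 J_{g(h)^{-1}\kappa}$ gives $\alpha(x,y)\ge(\tfrac{r^2}{4}\wedge\tfrac{\kappa^2}{16})J_{g(h)^{-1}\kappa}>0$.

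The only genuinely delicate points are the exact cancellation producing $\beta=\hat r-r$, which hinges on the symmetry $\mu_v(\R^d)=\mu_{-v}(\R^d)$ and hence on the specific structure of the refined basic coupling, and the case analysis $\hat r\le\kappa$ versus $\hat r>\kappa$ in the ``in particular'' part of (iv), where one has to check that the smallness condition $h\le\kappa(2L\mathcal R)^{-1}$ simultaneously activates the indicator $\I_{\{s<r\}}$ and forces the quadratic lower bound; the remainder is routine bookkeeping with Assumption {\bf(C)}.
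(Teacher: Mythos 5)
Your proposal is correct and follows essentially the same route as the paper: the exact cancellation of the $\pm(\hat r\wedge\kappa)$ terms via $\mu_{v}(\R^d)=\mu_{-v}(\R^d)$ giving $\beta=\hat r-r$, the expansion of $|\hat x-\hat y|^2$ with (c1)--(c2) for (ii), and the branch-one lower bounds with the case split $\hat r\le\kappa$ versus $\hat r>\kappa$ under $h\le\kappa(2L\mathcal R)^{-1}$ for (iii)--(iv). The only cosmetic difference is that you read off the deterministic value of $R$ in each branch rather than writing the three integrals explicitly, and you make the monotonicity of $\rho\mapsto J_\rho$ explicit to get positivity, which the paper leaves implicit.
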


\begin{proof} Let
$\hat{\beta}(x,y) =\Ee_{x,y}[R-\hat{r}]$.
We first claim that for all $x,y\in\rd$, $\hat{\beta}(x,y)=0$.
Indeed, for fixed $x,y\in\rd$ with $r=|x-y|>0$, by \eqref{ref-basic} and \eqref{e:mea},  we get
$$
\aligned
\Ee_{x,y}[R]&=\frac{1}{2}\int_{\rd}(\hat{r}-\hat{r}\wedge \kappa)\,\mu_{g(h)^{-1}(\hat{y}-\hat{x})_\kappa}(dz)
 +\frac{1}{2}\int_{\rd}(\hat{r}+\hat{r}\wedge \kappa)\,\mu_{g(h)^{-1}(\hat{x}-\hat{y})_\kappa}(dz)\\
&\quad+\int_{\rd} \hat{r}\,\left[\mu(dz)-\frac{1}{2}\mu_{g(h)^{-1}(\hat{y}-\hat{x})_\kappa}(dz)-\frac{1}{2}\mu_{g(h)^{-1}(\hat{x}-\hat{y})_\kappa}(dz)\right]\\
&=\hat{r}+\frac{1}{2}(\hat{r}\wedge \kappa)(\mu_{g(h)^{-1}(\hat{x}-\hat{y})_\kappa}(\rd)-\mu_{g(h)^{-1}(\hat{y}-\hat{x})_\kappa}(\rd))=\hat{r},
\endaligned
$$
which implies that $\hat{\beta}(x,y)=0$ for all $x,y\in\rd$.
Hence, for any $x,y\in \R^d$, $$\beta(x,y)=\hat{\beta}(x,y)+\hat{r}-r=\hat{r}-r.$$ This along with Assumption (c1) yields that for any $x,y\in \R^d$,
$$
\beta(x,y)\le |\hat{r}-r|\le |(\hat{x}-\hat{y})-(x-y)|=h|b(x)-b(y)|\le hLr,
$$proving the assertion (i).

Next, we suppose that  $r=|x-y|\ge \mathcal{R}$. As we mentioned above, $K\le L$ and so  $1-2hK+h^2L^2\ge0$. Combining this fact with (c2) and the element inequality $\sqrt{1+x}\le 1+x/2$ for  $x\ge0$, we arrive at that for any $h\le 2KL^{-2}$,
\begin{align*}
\hat{r}&=\sqrt{|x-y|^2+2h \langle x-y, b(x)-b(y)\rangle +h^2|b(x)-b(y)|^2}\\
&\le r\sqrt{1-2hK+h^2L^2}\le r(1-hK+h^2L^2/2).
\end{align*} This proves (ii) due to $\beta(x,y)= \hat{r}-r$.

(iii) immediately follows from the definition \eqref{ref-basic} for the Markov coupling $((X,Y), \Pp_{x,y})$, while (iv) is also a consequence of \eqref{ref-basic}. Indeed,  for any $x,y\in \R^d$,
$$
\aligned
2\alpha(x,y)&=\Ee[(R-r)^2\I_{\{R< r\}}]\ge \frac{1}{2} \big((\hat{r}-\hat{r}\wedge\kappa)-r\big)^2\I_{\{\hat{r}-\hat{r}\wedge\kappa<r\}}\mu_{g(h)^{-1}(\hat{y}-\hat{x})_\kappa}(\rd) \\
&\ge \frac{1}{2}J_{g(h)^{-1}\kappa}\big((\hat{r}-\hat{r}\wedge\kappa)-r\big)^2\I_{\{\hat{r}-\hat{r}\wedge\kappa<r\}}.
\endaligned
$$
Now, suppose that $x,y\in \R^d$ satisfies $|x-y|\le \mathcal{R}$.
If $\hat{r}\le \kappa$, then $\hat{r}-\hat{r}\wedge \kappa=0$ and so
$$2\alpha(x,y)\ge \frac{1}{2}r^2 J_{g(h)^{-1}\kappa}.$$
If $\hat{r}\ge \kappa$, then, due to the fact that $h\le \frac{\kappa}{2L\mathcal{R}}$ implies $
\hat{r}-r\le hLr\le \frac{\kappa}{2L\mathcal{R}} Lr\le {\kappa}/{2},
$
it holds that $\hat{r}-\hat{r}\wedge\kappa=\hat{r}-\kappa\le r-\kappa/2,$ and so
$$
2\alpha(x,y)\ge \frac{\kappa^2}{8}J_{g(h)^{-1}\kappa}.
$$ The proof is complete.
\end{proof}

\begin{remark} Lemma \ref{beta} and its proof show that, for the Markov coupling $((X,Y),\Pp_{x,y})$ given by \eqref{ref-basic}, $\hat{\beta}(x,y)=\Ee_{x,y}[R-\hat{r}]=0$ for any $x,y\in \R^d$, and so $\beta(x,y)=\hat r -r$ is only determined by the coefficient $b(x)$; and that $\alpha(x,y)$ is only caused by stochastic noise $\xi$ involved in transitions.
\end{remark}

Now, according to Theorem \ref{main-TV} and Lemma \ref{beta}, we have the following statement in this part.

\begin{theorem}\label{cont-rb}
Suppose that Assumption {\bf(C)} and condition {\rm(c3)} hold, and that $\lim_{h\rightarrow0^+}h/g(h)=0$. For any  $h\in (0,2KL^{-2}\wedge L^{-1})$ such that $h/g(h)\le \kappa_0(2L\mathcal{R})^{-1}$.
Let $\rho=f(|\cdot|)$ be defined by \eqref{1f} with  $r_1=\mathcal{R}$,
$$c=\frac{64hL\mathcal{R}}{g(h)^2\kappa_0^2J_{\kappa_0}}+1,\quad a=\frac{4c(1+e^{-c\mathcal{R}})hg(h)\kappa_0 L}{J_{\kappa_0}}+1.$$
Then, there is a constant $c_*\in (0,1)$ such that for all $x,y\in\rd$,
$$
\Ee_{x,y}[\rho(X,Y)]\le (1-c_*)\rho(x,y).
$$
\end{theorem}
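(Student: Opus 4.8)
The plan is to deduce the theorem directly from Theorem \ref{main-TV}: we check that the Markov coupling $((X,Y),\Pp_{x,y})$ from \eqref{ref-basic} satisfies Assumption {\bf(A)} with $r_1=\mathcal{R}$ and a suitable $r_0\le r_1$, and that the constants $c$ and $a$ displayed in the statement dominate the lower bounds required in \eqref{1b}. All the necessary estimates on $\pi$, $\beta$ and $\alpha$ are already contained in Lemma \ref{beta}; the work consists of choosing its free parameter $\kappa$ and bookkeeping the resulting constants. I would first fix $\kappa:=g(h)\kappa_0$, the largest value admissible in Lemma \ref{beta}(iii)--(iv), so that $g(h)^{-1}\kappa=\kappa_0$ and the quantity $J_{g(h)^{-1}\kappa}$ appearing there becomes $J_{\kappa_0}$; note that the hypothesis $h/g(h)\le\kappa_0(2L\mathcal{R})^{-1}$ is precisely the condition $h\le\kappa(2L\mathcal{R})^{-1}$ needed to invoke the quantitative part of Lemma \ref{beta}(iv).

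Next I would read off Assumption {\bf(A)}. Part (iii) of Lemma \ref{beta} gives $\underline{\pi}(r)\ge\frac{1}{2}J_{\kappa_0}$ for $r\le g(h)\kappa_0/(1+hL)$, so (a1) holds with $r_0:=\min\{g(h)\kappa_0/(1+hL),\mathcal{R}\}\le r_1$. Part (i) gives $\overline{\beta}(r)_+\le hLr$ for all $r$, so $\sup_{r\in(0,r_1]}\overline{\beta}(r)<\infty$; and since $h<L^{-1}$ forces $g(h)\kappa_0/(1+hL)>g(h)\kappa_0/2$, every $r\in(r_0,r_1]$ satisfies $r>g(h)\kappa_0/2$, whence part (iv) yields $\underline{\alpha}(r)\ge(\frac{r^2}{4}\wedge\frac{g(h)^2\kappa_0^2}{16})J_{\kappa_0}=\frac{g(h)^2\kappa_0^2}{16}J_{\kappa_0}>0$; this is (a2). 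Part (ii), together with $h<2KL^{-2}$, gives $\overline{\beta}(r)\le-c_0r$ for $r>\mathcal{R}$ with $c_0:=(K-hL^2/2)h>0$, which is (a3) with $r_1=\mathcal{R}$. Thus Assumption {\bf(A)} holds.

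Then I would verify \eqref{1b} for the stated constants. On $(r_0,r_1]$ the bounds above give $\frac{4\overline{\beta}(r)_+}{\underline{\alpha}(r)}\le\frac{4hLr}{(g(h)^2\kappa_0^2/16)J_{\kappa_0}}\le\frac{64hL\mathcal{R}}{g(h)^2\kappa_0^2J_{\kappa_0}}$, so $c=\frac{64hL\mathcal{R}}{g(h)^2\kappa_0^2J_{\kappa_0}}+1$ satisfies the first inequality in \eqref{1b} (in the borderline case $r_0=\mathcal{R}$ the supremum is over the empty set and any $c\ge1$ works). On $(0,r_0]$, since $r_0\le g(h)\kappa_0$, one has $\frac{\overline{\beta}(r)_+}{\underline{\pi}(r)}\le\frac{hLr_0}{J_{\kappa_0}/2}\le\frac{2hLg(h)\kappa_0}{J_{\kappa_0}}$, hence $2c(1+e^{-cr_1})\sup_{r\in(0,r_0]}\frac{\overline{\beta}(r)_+}{\underline{\pi}(r)}+1\le\frac{4c(1+e^{-c\mathcal{R}})hg(h)\kappa_0 L}{J_{\kappa_0}}+1=a$, which is the second inequality in \eqref{1b}. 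Therefore $\rho=f(|\cdot|)$ defined by \eqref{1f} with these $c$, $a$ and $r_1=\mathcal{R}$ is exactly the function furnished by Theorem \ref{main-TV}, and that theorem yields the asserted contraction with some $c_*\in(0,1)$.

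I do not anticipate a genuine obstacle here: the content is Lemma \ref{beta} and Theorem \ref{main-TV}, both already in hand, and what remains is the choice $\kappa=g(h)\kappa_0$ plus the numerical checks of \eqref{1b}. The points that need care are to use each hypothesis on $h$ where it belongs — $h<2KL^{-2}$ for positivity of $c_0$ in (a3), $h<L^{-1}$ to push $(r_0,r_1]$ into $(g(h)\kappa_0/2,\infty)$, and $h/g(h)\le\kappa_0(2L\mathcal{R})^{-1}$, a nonempty range precisely because $\lim_{h\to0^+}h/g(h)=0$, to license the $\alpha$-bound of Lemma \ref{beta}(iv) — and to record the degenerate case $g(h)\kappa_0/(1+hL)\ge\mathcal{R}$, in which Assumption {\bf(A)} reduces to (a1) and (a3) and the displayed formula for $c$ is read with an empty supremum.
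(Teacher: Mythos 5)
Your proposal is correct and follows essentially the same route as the paper's own proof: choose $\kappa=g(h)\kappa_0$, $r_0=g(h)\kappa_0/(1+hL)$ (capped at $\mathcal{R}$), $r_1=\mathcal{R}$, read off Assumption {\bf(A)} from Lemma \ref{beta}, and check that the stated $c$ and $a$ satisfy \eqref{1b} before invoking Theorem \ref{main-TV}. Your extra remarks on the degenerate case $r_0\ge\mathcal{R}$ and on where each hypothesis on $h$ is used are harmless refinements of the same argument.
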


\begin{proof}
Let $\kappa=g(h)\kappa_0$, $r_0=g(h)\kappa_0/(1+hL)$ and $r_1=\mathcal{R}$. Then it follows from Lemma \ref{beta} that for any $h\in (0,2KL^{-2} \wedge L^{-1})$,
\begin{itemize}
\item[\rm(i)] $\inf_{r\in(0,r_0]}\underline{\pi}(r)\ge \frac{1}{2}J_{\kappa_0}>0$;

\item [\rm(ii)] $\inf_{r\in (r_0,r_1]}\underline{\alpha}(r)\ge (\frac{r_0^2}{4}\wedge\frac{g(h)^2\kappa_0^2}{16})J_{\kappa_0}=\frac{g(h)^2\kappa_0^2}{16}J_{\kappa_0}$,
    and $\sup_{r\in(0,r_1]}\overline{\beta}(r)\le hLr_1<\infty$;

\item[\rm(iii)] $\overline{\beta}(r)\le -(K-hL^2/2)hr$ for any $r\in (r_1,\infty)$.
\end{itemize}
In particular,
$$
\sup_{r\in(r_0,r_1]}\frac{4\overline{\beta}(r)_+}{\underline{\alpha}(r)}\le \frac{64hL\mathcal{R}}{g(h)^2\kappa_0^2J_{\kappa_0}}\quad \text{and}\quad \sup_{r\in(0,r_0]}\frac{\overline{\beta}(r)_+}{\underline{\pi}(r)}\le \frac{hLr_0}{J_{\kappa_0}/2}\le \frac{2hg(h)\kappa_0 L}{J_{\kappa_0}}.
$$
With those two estimates at hand, the required assertion follows from Theorem \ref{main-TV}.
\end{proof}

\begin{remark}\label{remark:Thm33}
	In many applications, the support of the distribution $\mu$ of the noise random variable $\xi$ in the chain given by \eqref{eq:chain2} is unbounded. Hence we can choose $\kappa = \infty$ in \eqref{mu_x}, i.e., we have $(x)_{\kappa} = x$ for all $x \in \R^d$. Moreover, condition {\rm (c3)} is satisfied for any $\kappa_0 > 0$. Then points \rm(iii) and \rm(iv) in the statement of Lemma \ref{beta} can be modified to state that for any $h > 0$ and any $x$, $y \in \R^d$ we have
	\begin{equation*}
	\pi(x,y) \ge \frac{1}{2}\mu_{g(h)^{-1}(\hat{y}-\hat{x})}(\rd) \quad \text{ and } \quad \alpha(x,y) \ge \frac{1}{4}r^2\mu_{g(h)^{-1}(\hat{y}-\hat{x})}(\rd).
	\end{equation*}
	Then in the proof of Theorem \ref{cont-rb} we have
	$$
	\sup_{r\in(r_0,r_1]}\frac{4\overline{\beta}(r)_+}{\underline{\alpha}(r)}\le \frac{16hL\mathcal{R}}{r_0^2 J_{g(h)^{-1}r_1}}\quad \text{and}\quad \sup_{r\in(0,r_0]}\frac{\overline{\beta}(r)_+}{\underline{\pi}(r)} \le \frac{2h L r_0}{J_{g(h)^{-1}r_0}},
	$$
	which would result in taking
	$$c=\frac{16hL\mathcal{R}}{r_0^2 J_{g(h)^{-1}r_1}}+1,\quad a=\frac{4c(1+e^{-c\mathcal{R}})h L r_0}{J_{g(h)^{-1}r_0}}+1.$$
	We can now
 further apply this to the simplified version of Theorem \ref{main-TV} as explained in Remark \ref{remark:main-TV}.
 In particular, we have $r_0 = r_1 = \mathcal{R}$, and we obtain $c = 1$ and
	\begin{equation*}
	a = \frac{4(1+e^{-\mathcal{R}})h L \mathcal{R}}{J_{g(h)^{-1}\mathcal{R}}}+1 \ge \frac{4 h L \mathcal{R}}{J_{g(h)^{-1}\mathcal{R}}}.
	\end{equation*}
	Hence if we want to track the dependence of the contractivity constant $c_*$ on the parameters such as $h$ and $\mathcal{R}$, we have to analyse the quantity $J_{g(h)^{-1}\mathcal{R}}$,
 which depends on the noise distribution $\mu$. This will be explained on the examples of $\alpha$-stable and Gaussian noises in Remark \ref{remark:noises}.
\end{remark}

As demonstrated by the following example, our results apply to very general non-isotropic distributions $\mu$.

\begin{example}
Suppose that the distribution $\mu$ of the random variable $\xi$ is given by
$$
\mu(dz)=M^{-1}\I_{\{0<z_1\le 1\}}\frac{1}{(1+|z|)^{d+\alpha}}\,dz,
$$
where $z:=(z_1,\cdots, z_d)\in\rd$,
$\alpha\in(0,2)$ and $M:=\int_{\{0<z_1\le 1\}}\frac{1}{(1+|z|)^{d+\alpha}}\, dz$. Then, we can get from the proof of \cite[Example 1.2]{LW19} that for any $\kappa\in(0,1)$,
$$
J_\kappa:=\inf_{|z|\le\kappa}(\mu\wedge(\delta_z*\mu))(\rd)\ge c((1+\kappa)^{-\alpha}-2^{-\alpha})
$$
with some constant $c:=c(\alpha,M)>0$. In particular, taking $\kappa_0=1/2$ in \eqref{kappa}, we obtain from  (i) and (ii) in the proof of Theorem \ref{cont-rb} that
$$
\inf_{r\in(0,r_0]}\underline{\pi}(r)\ge J_{\kappa_0}/2\ge \frac{c}{2}((3/2)^{-\alpha}-2^{-\alpha})>0\quad \text{and}\quad \inf_{r\in(r_0,r_1]}\underline{\alpha}(r)\ge \frac{cg(h)^2}{64}((3/2)^{-\alpha}-2^{-\alpha}).
$$
Hence, under Assumption {\bf(C)} we can apply Theorem \ref{cont-rb} to this example.
\end{example}

\begin{remark}
Let us briefly discuss an extension of Theorem \ref{cont-rb}, in which condition {\rm (c2)} in Assumption {\bf(C)} is replaced by the following weaker condition.
\begin{itemize}
 \item[{\rm (c2*)}] There are constants $M_1$, $M_2 > 0$ such that $\langle x, b(x)\rangle\le M_1-M_2|x|^2$ for all $x \in \R^d$.
 \end{itemize}
Suppose that random variable $\xi$ has the finite $\theta$-th moment for some $\theta\in(0,2]$, i.e., $\Ee[|\xi|^\theta]<\infty$.
One can check that under conditions {\rm (c1)} and {\rm (c2*)}, condition {\rm (a3*)} is satisfied for $V(x) = |x|^\theta$. Indeed, letting $h < 2M_2/L_0$
and $L_0 := 2\max \{ L^2, |b(0)|^2 \}$,  we have
 $$
\aligned
\int_{\rd}V(z)p(x,dz)&=\Ee\left[|x+hb(x)+g(h)\xi|^\theta\right]
=\Ee\left[(|x+hb(x)+g(h)\xi|^2)^{\theta/2}\right]\\
&\le (1-2hM_2+h^2L_0)^{\theta/2}|x|^\theta+(2hM_1)^{\theta/2}+g(h)^{\theta}\Ee\left[|\xi|^\theta\right]
+(h^2 L_0)^{\theta/2}
\\
&\quad+\left[\left[(2g(h))^{\theta/2}+(2hg(h))^{\theta/2}L_0^{\theta/4} \right]|x|^{\theta/2} + (2hg(h))^{\theta/2}|b(0)|^{\theta/2} \right]
\Ee\left[|\xi|^{\theta/2}\right],
\endaligned
$$
where
in the inequality above we used the fact that
$$
\aligned
|x+hb(x) +g(h)\xi|^2&=|x|^2+2h\langle x,b(x)\rangle+h^2|b(x)|^2+g(h)^2|\xi|^2
+2g(h)\langle x,\xi\rangle +2hg(h)\langle b(x),\xi\rangle\\
&\le |x|^2+2h(M_1-M_2|x|^2)+h^2 L_0|x|^2+g(h)^2|\xi|^2
+h^2 L_0
\\
&\quad +2g(h)\langle x,\xi\rangle +2hg(h)\langle b(x),\xi\rangle\\
&=(1-2hM_2+h^2L_0)|x|^2+2hM_1+
h^2 L_0+
g(h)^2|\xi|^2\\
&\quad
+2g(h)\langle x,\xi\rangle +2hg(h)\langle b(x),\xi\rangle.
\endaligned$$
This implies that {\rm (a3*)} holds for some suitable constants $\lambda$ and $C_0$. In particular, if $\theta=2$ and $\Ee[\xi]=0$, one can easily check (cf.\ \cite[Example 6.2]{EM19}) that {\rm (a3*)} holds with
\begin{equation*}
\lambda = 2hM_2 - h^2 L_0, \quad C_0 =
h^2 L_0 +
2hM_1+g(h)^2\Ee\left[|\xi|^2\right].
\end{equation*}  Moreover, under conditions {\rm (c1)}, {\rm (c2*)} and {\rm (c3)}, points {\rm (i)}, {\rm (iii)} and {\rm (iv)} in Lemma \ref{beta} remain unchanged. Hence under {\rm (c1)}, {\rm (c2*)} and {\rm (c3)}, we can apply Theorem \ref{second-TV} to prove an analogue of Theorem \ref{cont-rb} under weaker conditions. We leave the details to the reader.
\end{remark}

\subsection{Special case: $\mu$ is rotationally invariant}
In this part, we are concerned
with
the case
where
the distribution $\mu$ of the random variable $\xi$ has a density function $m(x)$ with respect to the Lebesgue measure such that
$m(x)=m(|x|)$ for all $x\in \R^d$.
For this special case, we use the following Markov coupling $((X,Y^{(1)}),\Pp_{x,y})$ of the chain $X$:
\begin{equation}\label{reflection}
\begin{cases}X=\hat{x}+g(h)z,\qquad\qquad\qquad\qquad\qquad \qquad\mu(dz);\\
Y^{(1)}=
\begin{cases}
\hat{y}+g(h)(z+g(h)^{-1}(\hat{x}-\hat{y})_\kappa),&\quad \mu_{g(h)^{-1}(\hat{y}-\hat{x})_\kappa}(dz),  \\
\hat{y}+g(h)R_{\hat{x},\hat{y}}(z),&\quad \mu(dz)-\mu_{g(h)^{-1}(\hat{y}-\hat{x})_\kappa}(dz).
\end{cases}\end{cases}
\end{equation}
Here, $\kappa>0$ is a constant fixed later, $(x)_\kappa$ and $\mu_x$ are defined in \eqref{mu_x} for any $x,y, z \in \R^d$, and
$$
R_{x,y}(z):=\begin{cases} z-\frac{2\langle x-y,z\rangle}{|x-y|^2}(x-y),\quad &x\neq y,\\
z,\quad & x=y.\end{cases}$$
See Lemma \ref{coup-2} in the appendix for the proof that $((X,Y^{(1)}),\Pp_{x,y})$ is a Markov coupling of $X$.
Note that this coupling is a generalisation of the coupling by reflection, which was used for the Gaussian noise in \cite[Section 2.4]{EM19}, to arbitrary rotationally invariant distributions
(possibly with compact support).

We first make the following assumption on the density function $m(x)$:
\begin{itemize}{\it
\item[{\rm (c4)}] The density function $m(x)=m(|x|)$ of the distribution $\mu$ satisfies that $m(r)$ is non-increasing in $(0,\infty)$, and has the finite first moment, i.e., $\int_{\R^d} |z|m(|z|) \,dz<\infty$.}
\end{itemize}

Recall that $J_{\kappa_0}$ is defined by \eqref{kappa}.

\begin{lemma}\label{beta-reflec}
Suppose that Assumption {\bf(C)} and ${\rm (c4)}$ hold. Then, for the Markov coupling given by \eqref{reflection}, we have
\item[{\rm(i)}]
$\beta(x,y)\le hL |x-y|$ for any $x,y\in\rd${\rm;}

\item[{\rm(ii)}]
$\beta(x,y)\le -(K-hL^2/2)h|x-y|$ for any $x,y\in \R^d$ with $|x-y|\ge \mathcal{R}$ and any $h< 2KL^{-2}${\rm;}

\item[\rm{(iii)}] $\pi(x,y)\ge J_{g(h)^{-1}\kappa}\I_{\{\hat{r}\le \kappa\}}$ for any $x,y\in \R^d$. In particular, for any $x,y\in \R^d$ with $|x-y|\le \kappa/(1+hL)$,
    $$\pi(x,y)\ge J_{g(h)^{-1}\kappa};$$

\item[\rm{(iv)}]$
\alpha(x,y)\ge \frac{1}{2}J_{g(h)^{-1}\kappa}\big((\hat{r}-\hat{r}\wedge\kappa)-r\big)^2\I_{\{\hat{r}-\hat{r}\wedge\kappa<r\}}
$ for any $x,y\in \R^d$.
In particular, for any $x,y\in \R^d$ with $|x-y|\le \mathcal{R}$ and any $0<h\le \kappa(2L\mathcal{R})^{-1}$,
$$
\alpha(x,y)\ge\Big(\frac{r^2}{2}\wedge\frac{\kappa^2}{8}\Big) J_{g(h)^{-1}\kappa}.
$$

  \end{lemma}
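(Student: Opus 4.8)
The plan is to follow the blueprint of Lemma~\ref{beta} and its proof; the coupling \eqref{reflection} differs from \eqref{ref-basic} only in its non-synchronising part, where a reflection replaces a second translation, and Lemma~\ref{coup-2} already guarantees it is a genuine Markov coupling. As in Lemma~\ref{beta}, everything hinges on the identity $\hat\beta(x,y):=\Ee_{x,y}[R-\hat r]=0$ for all $x,y\in\R^d$, where now $R=|X-Y^{(1)}|$. Granting this, $\beta(x,y)=\hat\beta(x,y)+\hat r-r=\hat r-r$, and then (i), (ii) follow exactly as in the proof of Lemma~\ref{beta}: $\hat r-r\le|(\hat x-\hat y)-(x-y)|=h|b(x)-b(y)|\le hLr$ yields (i), and for $r\ge\mathcal R$ and $h<2KL^{-2}$, using (c2), $K\le L$ and $\sqrt{1+t}\le 1+t/2$ one gets $\hat r\le r\sqrt{1-2hK+h^2L^2}\le r(1-hK+h^2L^2/2)$, which gives (ii).

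The real content is the identity $\hat\beta=0$. I may assume $\hat x\ne\hat y$, since otherwise the coupling collapses to the identity and $R=0$ a.s. Put $e=(\hat x-\hat y)/\hat r$ and $s=g(h)^{-1}(\hat r\wedge\kappa)$, so that $g(h)^{-1}(\hat y-\hat x)_\kappa=-se$. Reading off \eqref{reflection}: on the translation part, of mass $\mu_{-se}(\R^d)$, one has $X-Y^{(1)}=(\hat r-\hat r\wedge\kappa)e$, hence $R=\hat r-\hat r\wedge\kappa$; on the reflection part, of density $(m(z)-m(z+se))_+\,dz$, one has $X-Y^{(1)}=(\hat r+2g(h)\langle e,z\rangle)e$, hence $R=|\hat r+2g(h)\langle e,z\rangle|$. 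The key point is that, since $m$ is radially non-increasing, $(m(z)-m(z+se))_+$ is supported (up to a Lebesgue-null set) on $\{z:\langle e,z\rangle>-s/2\}$, on which $\hat r+2g(h)\langle e,z\rangle>\hat r-g(h)s=\hat r-\hat r\wedge\kappa\ge0$; so the absolute value disappears on the reflection part. Taking expectations and using $\int(m(z)-m(z+se))_+\,dz=1-\mu_{-se}(\R^d)$ gives
\[
\hat\beta=-(\hat r\wedge\kappa)\,\mu_{-se}(\R^d)+2g(h)\int_{\{\langle e,z\rangle>-s/2\}}\langle e,z\rangle\big(m(z)-m(z+se)\big)\,dz.
\]
Splitting the last integral, substituting $w=z+se$ in the piece containing $m(z+se)$, and discarding the odd term $\int_{\{-s/2<\langle e,z\rangle\le s/2\}}\langle e,z\rangle\,m(z)\,dz=0$ (which vanishes by reflection symmetry of $m$ across $e^\perp$) reduces the integral to $s\int_{\{\langle e,z\rangle>s/2\}}m(z)\,dz$; since $2g(h)s=2(\hat r\wedge\kappa)$ this yields $\hat\beta=(\hat r\wedge\kappa)\big[2\int_{\{\langle e,z\rangle>s/2\}}m(z)\,dz-\mu_{-se}(\R^d)\big]$. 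Finally, decomposing $\mu_{-se}(\R^d)=\int\min(m(z),m(z+se))\,dz$ according to the sign of $\langle e,z\rangle+s/2$ and applying the substitutions $w=z+se$ and $w=-z$ (again using radial symmetry of $m$) shows $\mu_{-se}(\R^d)=2\int_{\{\langle e,z\rangle>s/2\}}m(z)\,dz$, so $\hat\beta=0$.

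For (iii) and (iv) I keep only the translation part of \eqref{reflection}: there $R=\hat r-\hat r\wedge\kappa$ is deterministic and the mass is $\mu_{g(h)^{-1}(\hat y-\hat x)_\kappa}(\R^d)\ge J_{g(h)^{-1}\kappa}$, because $|g(h)^{-1}(\hat y-\hat x)_\kappa|=g(h)^{-1}(\hat r\wedge\kappa)\le g(h)^{-1}\kappa$. Off a null set the reflection part has $R>\hat r-\hat r\wedge\kappa\ge0$, so $\{R=0\}$ is (essentially) contained in the event ``translation part occurs and $\hat r\le\kappa$'', giving (iii); restricting the sum defining $2\alpha(x,y)$ to the translation part gives (iv). The ``in particular'' assertions then go exactly as in Lemma~\ref{beta}: $\hat r\le(1+hL)r$ forces $\hat r\le\kappa$ when $r\le\kappa/(1+hL)$; and if $r\le\mathcal R$ and $h\le\kappa(2L\mathcal R)^{-1}$, then either $\hat r\le\kappa$, so $R=0$ on the translation part and $\alpha(x,y)\ge\tfrac12 r^2J_{g(h)^{-1}\kappa}$, or $\hat r>\kappa$, in which case $\hat r-r\le hLr\le\kappa/2$ gives $0\le\hat r-\kappa\le r-\kappa/2$, hence $((\hat r-\kappa)-r)^2\ge\kappa^2/4$ and $\alpha(x,y)\ge\tfrac18\kappa^2J_{g(h)^{-1}\kappa}$; combining the two cases yields the stated bound $\alpha(x,y)\ge\big(\tfrac{r^2}{2}\wedge\tfrac{\kappa^2}{8}\big)J_{g(h)^{-1}\kappa}$.

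The main obstacle is precisely the identity $\hat\beta=0$. A crude estimate only gives $\Ee[R]\ge\hat r$ on the reflection part by convexity of $|\cdot|$, so the exact cancellation genuinely uses the rotational invariance of $\mu$ twice over: once to locate the support of the reflection density and thereby remove the absolute value, and once to annihilate the odd integrals via the symmetry of $m$. Once $\hat\beta=0$ is in hand, the remaining steps are routine and parallel to those of Lemma~\ref{beta}.
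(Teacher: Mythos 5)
Your proposal is correct and takes essentially the same route as the paper: everything reduces to the identity $\Ee_{x,y}[|X-Y^{(1)}|]=\hat r$, proved by exploiting the radial monotonicity and symmetry of $m$ through changes of variables, after which (i)--(ii) follow exactly as in Lemma \ref{beta} and (iii)--(iv) are read off from the translation branch of \eqref{reflection}. The only difference is presentational: you perform the cancellation directly in $\R^d$ via half-space decompositions along $e$, whereas the paper first reduces to the one-dimensional case by rotational invariance.
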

\begin{proof} The proofs of (iii) and (iv) mainly follow from the definition of $((X,Y^{(1)}),\Pp_{x,y})$ and the argument for Lemma \ref{beta}. So, we only need to verify (i) and (ii).
	The proof is similar in spirit to the proof of \cite[Lemma 2.7]{EM19}.

Recall that $\hat{r}=|\hat{x}-\hat{y}|$ and $R=|X-Y|$. According to \eqref{reflection}, for any $x,y\in \R^d$,
$$\Ee_{x,y} [R]= (\hat{r}-\hat{r}\wedge \kappa) \mu_{g(h)^{-1}(\hat{y}-\hat{x})_\kappa}(\R^d)+
\int_{\R^d} \hat{r}\left|1+2g(h)\frac{\langle \hat x-\hat y,z\rangle}{|\hat x-\hat y|^2}\right| (\mu-\mu_{g(h)^{-1}(\hat{y}-\hat{x})_\kappa})(dz).
$$
Due to the
rotational invariance
of the measure $\mu$, it suffices to assume that $\hat{x}=0$ and $\hat{y}=\hat{r}e_1$, where $e_1,\cdots,e_d$ is the canonical basis of $\rd$.
Hence, without loss of generality, we can carry out the argument to the one-dimensional case.
In particular, when $d=1$,
\begin{equation}\label{e:pppsss}
\Ee_{x,y}[R]=(\hat{r}-\hat{r}\wedge\kappa)\,\mu_{g(h)^{-1}(\hat{r}\wedge\kappa)}(\R) +\int_{\R}|\hat{r}-2g(h)z|\,(\mu-\mu_{g(h)^{-1}(\hat{r}\wedge\kappa)})(dz).
\end{equation}
For the integration in the right-hand side of the equality above, using the assumptions that $r\mapsto m(r)$ is non-increasing in $(0,\infty)$ and $z\mapsto m(z)$ is symmetric on $\R$,
we have
\begin{align*}
&\int_{\R}|\hat{r}-2g(h)z|\,(\mu-\mu_{g(h)^{-1}(\hat{r}\wedge\kappa)})(dz)\\
&=\int_{\R}|\hat{r}-2g(h)z|\left(m(z)-m(z)\wedge m(z-g(h)^{-1}(\hat{r}\wedge\kappa))\right)\,dz\\
&=\int_{-\infty}^{(2g(h))^{-1}(\hat{r}\wedge\kappa)} (\hat{r}-2g(h)z)\left[m(z)-m(z-g(h)^{-1}(\hat{r}\wedge\kappa))\right]\,dz\\
&=(\hat{r}-\hat{r}\wedge\kappa)\mu(\{|z|\le (2g(h))^{-1}(\hat{r}\wedge\kappa)\})\\
&\quad +\int_{-\infty}^{(2g(h))^{-1}(\hat{r}\wedge\kappa)}  ((\hat{r}\wedge\kappa)-2g(h)z )\left[m(z)-m(z-g(h)^{-1}(\hat{r}\wedge\kappa))\right]\,dz\\
&=(\hat{r}-\hat{r}\wedge\kappa)\mu(\{|z|\le (2g(h))^{-1}(\hat{r}\wedge\kappa)\})\\
&\quad +(2g(h))^{-1}\int_{0}^{\infty} u\left[m\big((2g(h))^{-1}((\hat{r}\wedge\kappa)-u)\big)-m\big((2g(h))^{-1}((\hat{r}\wedge\kappa)+u)\big)\right]\,du\\
&=(\hat{r}-\hat{r}\wedge\kappa)\mu(\{|z|\le (2g(h))^{-1}(\hat{r}\wedge\kappa)\})\\
&\quad +(4g(h))^{-1}\int_{-\infty}^{\infty}  u\left[m\big((2g(h))^{-1}((\hat{r}\wedge\kappa)-u)\big)-m\big((2g(h))^{-1}((\hat{r}\wedge\kappa)+u)\big)\right]\,du\\
&=(\hat{r}-\hat{r}\wedge\kappa)\mu(\{|z|\le (2g(h))^{-1}(\hat{r}\wedge\kappa)\})\\
&\quad +\frac{1}{2}\int_{-\infty}^{+\infty} \big((\hat{r}\wedge\kappa)-2g(h)z\big)\left[m(z)-m(z-g(h)^{-1}(\hat{r}\wedge\kappa))\right]\,dz\\
&=(\hat{r}-\hat{r}\wedge\kappa)\mu(\{|z|\le (2g(h))^{-1}(\hat{r}\wedge\kappa)\})\\
&\quad +
g(h)\int_{-\infty}^{\infty} \big(z-g(h)^{-1}(\hat{r}\wedge\kappa) +g(h)^{-1}(\hat{r}\wedge\kappa)\big)m(z-g(h)^{-1}(\hat{r}\wedge\kappa))\,dz\\
&=(\hat{r}-\hat{r}\wedge\kappa)\mu(\{|z|\le (2g(h))^{-1}(\hat{r}\wedge\kappa)\}) +(\hat{r}\wedge\kappa),
\end{align*}
where in the last equality we used the fact that $\int_{\R} zm(z)\,dz=0$ thanks to the symmetry property of $m(z)$ and $\int_{\R}|z|m(|z|)\,dz<\infty$.

On the other hand, also due to the assumption of $m$,
$$
\mu_{g(h)^{-1}(\hat{r}\wedge\kappa)}(\R)=\mu(\{|z|>(2g(h))^{-1}(\hat{r}\wedge\kappa)\}).
$$
Hence, putting both estimates above into \eqref{e:pppsss}, we get
$$
\Ee_{x,y}[R]
=(\hat{r}-\hat{r}\wedge\kappa)\mu(\{|z|>(2g(h))^{-1}(\hat{r}\wedge\kappa)\})
+(\hat{r}-\hat{r}\wedge\kappa)\mu(\{|z|\le (2g(h))^{-1}(\hat{r}\wedge\kappa)\}) +(\hat{r}\wedge\kappa)=\hat{r},
$$
which implies that $\beta(x,y)=\hat{r}-r$. With this at hand, the assertions (i) and (ii) then also follow from the proof of
Lemma \ref{beta}.
\end{proof}

Note that under condition (c4), there is a constant $\kappa_0>0$ such that $J_{\kappa_0}>0$, where $J_{\kappa_0}$ is defined by \eqref{kappa}. Indeed, without loss of generality, we still assume that $d=1$.  Then, for any $z\in \R$,
$$
\mu\wedge(\delta_z\ast\mu)(\R)=\int_{\R} m(u)\wedge m(u-z)\,du
=2\int_{|z|/2}^\infty m(u)\,du.
$$ This yields the desired assertion.

Combining this with Lemma \ref{beta-reflec} and following the proof of Theorem \ref{cont-rb}, we have the statement below.

\begin{theorem}
Suppose Assumption {\bf(C)} and condition {\rm(c4)} hold, and that $\lim_{h\rightarrow0^+}h/g(h)=0$.
Then there exists a constant $\kappa_0>0$ such that for any  $h\in (0,2KL^{-2}\wedge L^{-1})$ with $h/g(h)\le \kappa_0(2L\mathcal{R})^{-1}$ and for all $x,y\in\rd$,
$$
\Ee_{x,y}[\rho(X,Y^{(1)})]\le (1-c_*)\rho(x,y),
$$
where $\rho=f(|\cdot|)$
is
defined by \eqref{1f} with $r_1=\mathcal{R}$,
$$c=\frac{32hL\mathcal{R}}{g(h)^2\kappa_0^2J_{\kappa_0}}+1,\quad a=\frac{2c(1+e^{-c\mathcal{R}})hg(h)\kappa_0 L}{J_{\kappa_0}}+1$$ and the constant $c_*\in (0,1)$ is independent of $x,y\in \R^d$.
\end{theorem}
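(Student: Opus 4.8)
The plan is to follow the same strategy as in the proof of Theorem \ref{cont-rb}: verify Assumption {\bf(A)} for the reflection-type coupling $((X,Y^{(1)}),\Pp_{x,y})$ given by \eqref{reflection}, using the estimates for $\beta$, $\pi$ and $\alpha$ collected in Lemma \ref{beta-reflec}, and then invoke Theorem \ref{main-TV}. First I would recall, from the discussion immediately following Lemma \ref{beta-reflec}, that condition {\rm(c4)} guarantees the existence of a constant $\kappa_0>0$ with $J_{\kappa_0}>0$; this is the $\kappa_0$ whose existence is asserted in the statement. Then, for a given $h\in(0,2KL^{-2}\wedge L^{-1})$ satisfying $h/g(h)\le\kappa_0(2L\mathcal{R})^{-1}$, I would set
$$
\kappa=g(h)\kappa_0,\qquad r_0=\frac{g(h)\kappa_0}{1+hL},\qquad r_1=\mathcal{R},
$$
and note that $r_0\le\kappa$ and, since $h<L^{-1}$ forces $1+hL<2$, also $\kappa/2<r_0\le\kappa$.

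Next I would check the three parts of Assumption {\bf(A)}. From Lemma \ref{beta-reflec}{\rm(iii)}, for $r\in(0,r_0]$ we get $\underline{\pi}(r)\ge J_{g(h)^{-1}\kappa}=J_{\kappa_0}>0$, which is {\rm(a1)}. For {\rm(a2)}, the constraint $h/g(h)\le\kappa_0(2L\mathcal{R})^{-1}$ is exactly $h\le\kappa(2L\mathcal{R})^{-1}$, so Lemma \ref{beta-reflec}{\rm(iv)} applies and gives, for $r\in(r_0,r_1]$,
$$
\underline{\alpha}(r)\ge\Big(\frac{r_0^2}{2}\wedge\frac{\kappa^2}{8}\Big)J_{\kappa_0}=\frac{\kappa^2}{8}J_{\kappa_0}=\frac{g(h)^2\kappa_0^2}{8}J_{\kappa_0}>0,
$$
using $r_0>\kappa/2$, while Lemma \ref{beta-reflec}{\rm(i)} gives $\sup_{r\in(0,r_1]}\overline{\beta}(r)\le hL\mathcal{R}<\infty$. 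Finally, {\rm(a3)} follows from Lemma \ref{beta-reflec}{\rm(ii)}: for $r>r_1=\mathcal{R}$ we have $\overline{\beta}(r)\le-(K-hL^2/2)hr$ with $c_0:=(K-hL^2/2)h>0$, since $h<2KL^{-2}$.

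With Assumption {\bf(A)} in place, it remains to verify that the constants $c$ and $a$ in the statement meet the requirements \eqref{1b}. From the bounds above,
$$
\sup_{r\in(r_0,r_1]}\frac{4\overline{\beta}(r)_+}{\underline{\alpha}(r)}\le\frac{4hL\mathcal{R}}{g(h)^2\kappa_0^2 J_{\kappa_0}/8}=\frac{32hL\mathcal{R}}{g(h)^2\kappa_0^2 J_{\kappa_0}},
$$
so $c=\frac{32hL\mathcal{R}}{g(h)^2\kappa_0^2 J_{\kappa_0}}+1$ satisfies the first condition in \eqref{1b}, and, using $r_0\le\kappa=g(h)\kappa_0$,
$$
\sup_{r\in(0,r_0]}\frac{\overline{\beta}(r)_+}{\underline{\pi}(r)}\le\frac{hLr_0}{J_{\kappa_0}}\le\frac{hL g(h)\kappa_0}{J_{\kappa_0}},
$$
so $a=\frac{2c(1+e^{-c\mathcal{R}})hg(h)\kappa_0 L}{J_{\kappa_0}}+1$ satisfies the second condition in \eqref{1b} (recall $r_1=\mathcal{R}$). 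Theorem \ref{main-TV} then yields the claimed contraction with some $c_*\in(0,1)$; inspecting its proof, $c_*=\min\{c_1,c_2,c_3\}$ depends only on $h$, $L$, $K$, $\mathcal{R}$, $\kappa_0$, $J_{\kappa_0}$, $g(h)$ and on $a,c$, hence it is independent of $x,y$. There is essentially no real obstacle here beyond the bookkeeping already done in Lemma \ref{beta-reflec}; the one point requiring care is matching the admissible range of $h$ to the three constraints $r_0\le\kappa$, $r_0>\kappa/2$ and $h\le\kappa(2L\mathcal{R})^{-1}$, all of which follow from $h<L^{-1}$ and $h/g(h)\le\kappa_0(2L\mathcal{R})^{-1}$.
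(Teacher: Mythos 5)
Your proposal is correct and follows essentially the same route as the paper, whose proof of this theorem is simply the remark that under (c4) one has $J_{\kappa_0}>0$ for some $\kappa_0>0$, combined with Lemma \ref{beta-reflec} and the argument of Theorem \ref{cont-rb} (i.e.\ verifying Assumption {\bf(A)} with $\kappa=g(h)\kappa_0$, $r_0=g(h)\kappa_0/(1+hL)$, $r_1=\mathcal{R}$ and invoking Theorem \ref{main-TV}). Your bookkeeping — in particular the bounds $\underline{\alpha}(r)\ge \tfrac{g(h)^2\kappa_0^2}{8}J_{\kappa_0}$ and $\underline{\pi}(r)\ge J_{\kappa_0}$, which explain why the factors here are $32$ and $2c$ rather than the $64$ and $4c$ of Theorem \ref{cont-rb} — matches the intended argument.
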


\begin{remark}\label{remark:Thm37}
	As explained in Remark \ref{remark:Thm33} for Theorem \ref{cont-rb}, when the support of the distribution $\mu$ of the random variable $\xi$ is unbounded, we can modify our bounds for $\pi(x,y)$ and $\alpha(x,y)$. Points \rm{(iii)} and \rm{(iv)} in Lemma \ref{beta-reflec} become
	\begin{equation*}
	\pi(x,y) \ge \mu_{g(h)^{-1}(\hat{y}-\hat{x})}(\rd) \quad \text{ and } \quad \alpha(x,y) \ge \frac{1}{2}r^2\mu_{g(h)^{-1}(\hat{y}-\hat{x})}(\rd)
	\end{equation*}
	for any $h > 0$ and any $x$, $y \in \R^d$, i.e., they only differ from the bounds considered in Remark \ref{remark:Thm33} by the absence of factor $\frac{1}{2}$. As a consequence, one can take
	$$c=\frac{8hL\mathcal{R}}{r_0^2 J_{g(h)^{-1}r_1}}+1,\quad a=\frac{2c(1+e^{-c\mathcal{R}})h L r_0}{J_{g(h)^{-1}r_0}}+1$$
	in Theorem \ref{main-TV}.
Furthermore,
by applying its simplified version as explained in Remark \ref{remark:main-TV}, with $r_0 = r_1 = \mathcal{R}$,
we have $c = 1$ and
\begin{equation*}
	a = \frac{2(1+e^{-\mathcal{R}})h L \mathcal{R}}{J_{g(h)^{-1}\mathcal{R}}}+1 \ge \frac{2 h L \mathcal{R}}{J_{g(h)^{-1}\mathcal{R}}}.
	\end{equation*}	
	In particular, we observe that for isotropic noise distributions $\mu$ (for which both couplings discussed above are applicable), the result based on the reflection coupling \eqref{reflection} can lead to a slightly better contractivity constant than the result based on the refined basic coupling \eqref{ref-basic}.
\end{remark}

\begin{remark}\label{remark:noises}
	Let us now consider two different noise distributions $\mu$, namely, the normal and the $\alpha$-stable distributions, with the aim of tracking the dependence of the quantity $J_{g(h)^{-1}\mathcal{R}}$
 (and hence of the contractivity constant for the corresponding chains) on parameters $h$ and $\mathcal{R}$. To this end, we will use the bounds discussed above in Remark \ref{remark:Thm37} and combine them with the simplified version of Theorem \ref{main-TV} as discussed in Remark \ref{remark:main-TV}. It is well-known that for a one-dimensional random variable $Z$ with the standard normal distribution one has the tail estimate
	\begin{equation*}
	\Pp (Z > x) \approx \exp(-x^2/2)
	\end{equation*}
	for any $x > 0$, whereas if $Z$ has the
rotationally invariant $\alpha$-stable distribution, one has
	\begin{equation*}
	\Pp (Z > x) \approx
(1+x)^{-\alpha}
	\end{equation*} for all $x>0$.
		Note that, for a rotationally invariant distribution $\mu$ with a density $m(z) = m(|z|)$ for $z \in \R^d$, we have  for any $x \in \R^d$,
	\begin{equation*}
	\begin{split}
	(\mu \wedge (\delta_x \ast \mu)) (\R^d) &= \int_{\R^d} m(z) \wedge m(z + |x|e_1) dz \\
	&=  \int_{\R^d} m((|z_1|^2+|\tilde z|^2)^{1/2})\wedge m((|z_1+|x||^2+|\tilde z|^2)^{1/2})\,dz_1\,d \tilde z\\
	&=  \int_{\{|z_1|\ge |x|/2 \}}\left( \int_{\R^{d-1}} m((|z_1|^2+|\tilde z|^2)^{1/2})\,d \tilde z \right)\,dz_1 \,,
	\end{split}
	\end{equation*}
	where $z=(z_1,\tilde z)$ and $\tilde z:=(z_2,\cdots, z_d)$. Furthermore, observe that
	\begin{equation*}
	\begin{split}
d \int_{\{|z_1|\ge |x|/2 \}}\left( \int_{\R^{d-1}} m(z)\,d \tilde z \right)\,dz_1 &\ge
\int_{\{ |z_1|\ge |x|/2 {\rm\,\, or\,\, } |z_2|\ge |x|/2 {\rm\,\, or\,\, } \ldots {\rm\, \, or\, \,} |z_d|\ge |x|/2\}} m(z)\,dz \\
&\ge \int_{\{ |z| \ge \sqrt{d} |x|/2 \}} m(z) \, dz \,.
\end{split}
	\end{equation*}
	This shows that $(\mu \wedge (\delta_x \ast \mu)) (\R^d) \ge \frac{1}{d}\int_{\{ |z| \ge \sqrt{d} |x|/2 \}} m(z) \,dz$ and hence, by applying the tail estimates above,
	\begin{equation*}
		J_{g(h)^{-1}\mathcal{R}}
 = \begin{cases}
		\Omega(d^{-1}\exp \left( -d\mathcal{R}^2/h \right))  \quad &\text{when } \mu \text{ is Gaussian}, \\
\Omega(d^{-1}(1+d^{1/2}h^{-1/\alpha}\mathcal{R})^{-\alpha})  \quad &\text{when } \mu \text{ is }\alpha\text{-stable},
		\end{cases}
	\end{equation*}
	since $g(h)=h^{1/2}$ when $\mu$ is Gaussian and $g(h)=h^{1/\alpha}$ when $\mu$ is $\alpha$-stable.
	Following Remark \ref{remark:Thm37}, we need to have
	\begin{equation}\label{eq:abound}
	a \ge \frac{2 h L \mathcal{R}}{J_{g(h)^{-1}\mathcal{R}}},
	\end{equation}
	hence, choosing $a = \frac{2 h L \mathcal{R}}{J_{g(h)^{-1}\mathcal{R}}}$,
	we see that it is of order
	\begin{equation*}
	a = \begin{cases}
\mathcal{O}\left( d h \mathcal{R} \exp \left( d \mathcal{R}^2/h \right) \right)  \quad &\text{when } \mu \text{ is Gaussian}, \\
	\mathcal{O}\left(
d h \mathcal{R}(1+d^{1/2} h^{-1/\alpha}\mathcal{R})^{\alpha} \right)  \quad &\text{when } \mu \text{ is }\alpha\text{-stable}.
	\end{cases}
	\end{equation*}
	Moreover, in the simplified version of Theorem \ref{main-TV} (cf.\ Remark \ref{remark:main-TV}) the contractivity constant $c_* = \min \{ c_1, c_3 \}$ is determined by
		\begin{equation*} c_1=\frac{a}{2(a+1+\mathcal{R}\e^{-\mathcal{R}})}\inf_{r\in(0,\mathcal{R}]}\underline{\pi}(r) \ge \frac{hL\mathcal{R}}{a+1+\mathcal{R}\e^{-\mathcal{R}}} , \quad c_3=c_0[1+(1+a)\mathcal{R}^{-1} e^{\mathcal{R}}]^{-1},
	\end{equation*}
	where in the inequality above we used \eqref{eq:abound} and the bound on $\underline{\pi}$ from Remark \ref{remark:Thm37}.
	This shows that for large $\mathcal{R}$ or for small $h$, the contractivity constant in the $\alpha$-stable case can remain much larger than the corresponding constant in the Gaussian case. In particular, $c_1 = \Omega (\mathcal{R}^{-\alpha})$ as $\mathcal{R} \to \infty$ in the $\alpha$-stable case, whereas $c_1 = \Omega (\exp(-\mathcal{R}^2))$ as $\mathcal{R} \to \infty$ in the Gaussian case.
\end{remark}

Next, we will consider the contraction of the Markov coupling  defined by \eqref{reflection} in terms of the $L^1$-Wasserstein distance.
Recall again that
$$\alpha_{l}(x,y)=\frac{1}{2}\Ee_{x,y}[(R-r)^2\I_{\{R< r+l\}}] \quad \text{and}\quad \underline{\alpha}_{l}(r)=\inf_{|x-y|=r}\alpha_{l}(x,y).
$$

\begin{lemma}\label{alpha_g(h)}
Suppose that Assumption {\bf(C)} and condition {\rm(c4)}
hold, and that $\lim_{h\rightarrow0^+}h/g(h)=0$. Then there exist constants $\varepsilon\in (0,1/4)$,
$\gamma>0$ large enough
and $c^*>0$ $($which is independent of $\gamma$ but depends on $\varepsilon)$
such that for any $\kappa, h>0$ with $\kappa\le \varepsilon g(h)/4$ and $h/g(h)\le \varepsilon (2L\mathcal{R})^{-1}$, and for any $x,y\in \rd$ with $|x-y|\in (0,\mathcal{R}]$,
$$
\alpha_{\gamma g(h)}(x,y)
\ge c^* g(h)(\hat{r}\wedge\kappa).
$$
\end{lemma}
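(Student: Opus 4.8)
The plan is to extract the entire lower bound from the reflection branch of the coupling \eqref{reflection}, i.e.\ from the part of $Y^{(1)}$ governed by $\mu(dz)-\mu_{g(h)^{-1}(\hat y-\hat x)_\kappa}(dz)$, discarding the nonnegative contribution of the other branch. First I would reduce to a one-dimensional calculation: since $\mu$ is rotationally invariant we may assume $\hat x-\hat y=\hat r e_1$. On the reflection branch one then computes $X-Y^{(1)}=(\hat r+2g(h)z_1)e_1$, so $R=|\hat r+2g(h)z_1|$, while the corresponding sub-probability measure equals $(m(z)-m(z+se_1))_+\,dz$ with $s:=g(h)^{-1}(\hat r\wedge\kappa)$, and this is supported exactly on $\{z_1>-s/2\}$, the set on which $m(z)\ge m(z+se_1)$ for all the remaining coordinates. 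Crucially, on that set $R=\hat r+2g(h)z_1\ge\hat r-\hat r\wedge\kappa\ge0$, so the modulus drops. Writing $\delta:=\hat r-r$ (so $|\delta|\le hL\mathcal R$) and $\bar m(t):=\int_{\R^{d-1}}m((t^2+|y|^2)^{1/2})\,dy$ for the first-coordinate marginal density of $\mu$ (even, non-increasing on $[0,\infty)$, with $\int_\R\bar m=1$, hence finite for $t>0$ and tending to $0$), integrating out $z_2,\dots,z_d$ gives
$$
\alpha_{\gamma g(h)}(x,y)\ \ge\ \frac12\int_{-s/2}^{(\gamma g(h)-\delta)/(2g(h))}\bigl(\delta+2g(h)z_1\bigr)^2\,\bigl(\bar m(z_1)-\bar m(z_1+s)\bigr)\,dz_1 .
$$

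I would then lower-bound this integral by restricting it to $z_1\in[\varepsilon,T]$, where $T:=(\gamma g(h)-\delta)/(2g(h))$. The hypotheses $\kappa\le\varepsilon g(h)/4$ and $h/g(h)\le\varepsilon(2L\mathcal R)^{-1}$ yield $s\le\varepsilon/4$ and $|\delta|\le\varepsilon g(h)/2$; hence on $[\varepsilon,T]$ one has $\delta+2g(h)z_1\ge\tfrac32\varepsilon g(h)>0$, so $(\delta+2g(h)z_1)^2\ge\varepsilon^2g(h)^2$, and moreover $T\ge\gamma/4$ whenever $\gamma\ge1$. For the measure factor, the telescoping identity
$$
\int_\varepsilon^{T}\bigl(\bar m(z_1)-\bar m(z_1+s)\bigr)\,dz_1=\int_\varepsilon^{\varepsilon+s}\bar m-\int_{T}^{T+s}\bar m\ \ge\ s\bigl(\bar m(2\varepsilon)-\bar m(T)\bigr)
$$
(using that $\bar m$ is non-increasing and $\varepsilon+s\le2\varepsilon$) shows that no mass is lost provided the cut-off term $\bar m(T)$ is small. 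Now choose $\varepsilon\in(0,1/4)$ small enough that $\bar m(2\varepsilon)>0$ — possible since $m$, being a non-increasing radial density, is strictly positive on a ball — and then choose $\gamma$ large enough, \emph{depending only on $\varepsilon$}, that $\bar m(\gamma/4)\le\tfrac12\bar m(2\varepsilon)$; this is possible because $\bar m(T)\to0$. For such $\gamma$ the displayed integral is at least $\tfrac12 s\,\bar m(2\varepsilon)$, and combining the two estimates gives
$$
\alpha_{\gamma g(h)}(x,y)\ \ge\ \frac{\varepsilon^2\bar m(2\varepsilon)}{4}\,g(h)^2 s\ =\ \frac{\varepsilon^2\bar m(2\varepsilon)}{4}\,g(h)\,(\hat r\wedge\kappa),
$$
so $c^*:=\varepsilon^2\bar m(2\varepsilon)/4$ does the job; note $c^*$ depends on $\varepsilon$ but not on $\gamma$, since enlarging $\gamma$ only enlarges $\alpha_{\gamma g(h)}(x,y)$. (The degenerate case $\hat r=0$ is trivial, the right-hand side being $0$.)

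The main obstacle is the possible degeneracy of the marginal $\bar m$: it may be unbounded at $0$, flat on some interval, or compactly supported, so that there is no usable pointwise lower bound on $\bar m(z_1)-\bar m(z_1+s)$ and purely local estimates fail. The construction above is designed around this: keeping $z_1$ bounded away from $0$ by $\varepsilon$ both controls the quadratic weight (whose only zero, at $z_1=-\delta/(2g(h))$, lies within $\varepsilon/4$ of the origin) and lets one rely solely on the global telescoping identity, while taking $\gamma$ large absorbs the truncation at $T$. Rotational invariance of $\mu$ is used only to identify $R$ with $\hat r+2g(h)z_1$ on the support of the reflection measure; the remainder is a one-variable estimate.
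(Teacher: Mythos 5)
Your argument is correct and follows essentially the same route as the paper's proof: extract the entire bound from the reflection branch of \eqref{reflection} on the event that the distance increases by an amount of order between $\varepsilon g(h)$ and $\gamma g(h)$, bound the mass that the leftover measure gives to this event from below by $g(h)^{-1}(\hat r\wedge\kappa)$ times a positive constant using monotonicity of the radial density, and then choose $\varepsilon$ small and $\gamma$ large so that this constant is positive. Your only deviations are cosmetic: you make the reduction to one dimension explicit via the first-coordinate marginal $\bar m$ (the paper simply asserts it suffices to take $d=1$), and you replace the paper's Fubini step involving $-\frac{dm(s)}{ds}$ by the equivalent, slightly more elementary telescoping of $\int_\varepsilon^T\bigl(\bar m(z)-\bar m(z+s)\bigr)\,dz$.
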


\begin{proof}
Similarly to the proof of Lemma \ref{beta-reflec}, it suffices to consider the case $d=1$.
Let $x,y\in \R$ with $r=|x-y|\in (0,\mathcal{R}]$. Without loss of generality, we assume that $\hat{x}=x+hb(x)=0$ and $\hat{r}=\hat{y}>0$.
For $\varepsilon\in (0,1/4)$
and
$\gamma > 1 $
large enough
(which will be fixed later), by \eqref{reflection} and the properties of $m$,
\begin{align*}
&2\alpha_{\gamma g(h)}(x,y)=\Ee_{x,y}[(R-r)^2\I_{\{R\le r+\gamma g(h)\}}]\\
&\ge \varepsilon^2 g(h)^2\int_{\{r+\varepsilon g(h)\le  |\hat{r}-2g(h)z|\le r+\gamma g(h)\}}(\mu-\mu_{g(h)^{-1}(\hat{r}\wedge\kappa)})\,(dz)\\
&= \varepsilon^2 g(h)^2 \int_{\{z\le (2g(h))^{-1}(\hat{r}\wedge\kappa),\ (2g(h))^{-1}(\hat{r}-r)-\gamma/2\le  z\le (2g(h))^{-1}(\hat{r}-r)-\varepsilon/2\}} \big(m(z)-m(z-g(h)^{-1}(\hat{r}\wedge\kappa))\big)\,dz.
\end{align*}

Note that, according to (c1) in Assumption {\bf (C)} and $h/g(h)\le \varepsilon(2L\mathcal{R})^{-1}$,
$$
(2g(h))^{-1}|\hat{r}-r|\le (2g(h))^{-1}hLr\le \varepsilon/4.
$$
This, along with the condition that $\kappa\le \varepsilon g(h)/4$, yields that
$$
\aligned
\alpha_{\gamma g(h)}(x,y)&\ge
\frac{\varepsilon^2g(h)^2}{2}
\int_{(2g(h))^{-1}(\hat{r}-r)-\gamma/2}
^{(2g(h))^{-1}(\hat{r}-r)-\varepsilon/2}
\big(m(z)-m(z-g(h)^{-1}(\hat{r}\wedge\kappa))\big)\,dz\\
&\ge \frac{\varepsilon^2g(h)^2}{2}
\int_{-\gamma/2+\varepsilon/4}^{-3\varepsilon/4}
\big(m(z)-m(z-g(h)^{-1}(\hat{r}\wedge\kappa))\big)\,dz\\
&= \frac{\varepsilon^2g(h)^2}{2}
\int^{\gamma/2-\varepsilon/4}_{3\varepsilon/4}
\big(m(z)-m(z+g(h)^{-1}(\hat{r}\wedge\kappa))\big)\,dz\\
&= \frac{\varepsilon^2g(h)^2}{2}
 \int^{\gamma/2-\varepsilon/4}_{3\varepsilon/4}
\int_z^{z+g(h)^{-1}(\hat{r}\wedge\kappa)} \,\frac{-dm(s)}{ds}\,dz\\
&\ge \frac{\varepsilon^2g(h)^2}{2} \int^{\gamma/2-\varepsilon/4}_{\varepsilon} \int^s_{s-g(h)^{-1}(\hat{r}\wedge\kappa)}\,dz\, \frac{-dm(s)}{ds}\\
&=\frac{\varepsilon^2g(h)(\hat{r}\wedge\kappa)}{2} \left( m(\varepsilon)- m(\gamma/2-\varepsilon/4)\right)  \\
&\ge c_1g(h)(\hat{r}\wedge\kappa),
\endaligned
$$
where in the first equality we used the symmetry of $z\mapsto m(z)$ on $\R$ and the change of variable, in the third inequality we used Fubini's lemma and $\kappa\le \varepsilon g(h)/4$,
and the last inequality follows from the fact $\lim_{s\rightarrow\infty}m(s)=0$ (which is implied by condition (c4))
and by choosing $\varepsilon\in (0,1/4)$ small
and $\gamma>0$ large enough
so that $m(\varepsilon)- m(\gamma/2-\varepsilon/4)\ge m(\varepsilon)/2>0$.
Hence, the proof is complete. \end{proof}

Combining Lemmas \ref{beta-reflec} and \ref{alpha_g(h)} with Theorem \ref{thm-w1}, we can obtain the following statement.

\begin{theorem} Suppose that Assumption {\bf(C)} and condition {\rm(c4)}
hold, and that $\lim_{h\rightarrow0^+}h/g(h)=0$.
Let $h\in (0,2KL^{-2}\wedge(2L)^{-1})$ and
$\kappa\le \varepsilon g(h)/4$ such that $h/g(h)\le \varepsilon(2L\mathcal{R})^{-1}$
and
\begin{equation}\label{b2-g(h)}
\Big[\frac{2hL\mathcal{R}}{c^* g(h)((\mathcal{R}/2)\wedge\kappa)}+1\Big]\gamma g(h)
\le \log 2
\end{equation}
with
$\varepsilon$, $\gamma$ and
$c^*$ being the constants given in Lemma $\ref{alpha_g(h)}$. Let $\rho=f(|\cdot|)$ be the function defined by \eqref{2f} with $\Psi(r)=r$, $r_1=\mathcal{R}$, $l_0=\gamma g(h)$
and
$$
c=\frac{2hL\mathcal{R}}{c^* g(h)((\mathcal{R}/2)\wedge\kappa)}+1.
$$
Then, there is a constant $c_*\in (0,1)$ such that  for all $x,y\in \rd$,
$$
\Ee_{x,y}[\rho(X,Y^{(1)})]\le (1-c_*)\rho(x,y).
$$
\end{theorem}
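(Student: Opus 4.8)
The plan is to derive this theorem from Theorem \ref{thm-w1}, applied to the reflection-type coupling $((X,Y^{(1)}),\Pp_{x,y})$ of \eqref{reflection}, with the concave profile $\Psi(r)=r$ and parameters $r_1=\mathcal{R}$ and $l_0=\gamma g(h)$, where $\varepsilon,\gamma,c^*$ are the constants produced by Lemma \ref{alpha_g(h)}. Since $\Psi(r)=r$ trivially satisfies the structural requirements in Assumption {\bf(B)} ($\Psi(0)=0$, $\Psi'\equiv1>0$, $\Psi''\equiv0$ is non-positive and non-increasing), the whole task reduces to checking conditions {\rm(b1)}--{\rm(b3)} for this data and confirming that the explicit $c$ in the statement is an admissible choice in \eqref{2f}. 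I would organise the verification around the two ingredients already available: Lemma \ref{beta-reflec} for the drift $\overline{\beta}$ and Lemma \ref{alpha_g(h)} for the fluctuation $\underline{\alpha}_{l_0}$.

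First I would dispose of {\rm(b3)}: since $h<2KL^{-2}$ we have $K-hL^2/2>0$, so Lemma \ref{beta-reflec}(ii) gives $\overline{\beta}(r)\le-(K-hL^2/2)h\,r$ for all $r>\mathcal{R}$, which is {\rm(b3)} with $c_0=(K-hL^2/2)h>0$. For {\rm(b1)}, the hypotheses $\kappa\le\varepsilon g(h)/4$ and $h/g(h)\le\varepsilon(2L\mathcal{R})^{-1}$ assumed here are exactly what Lemma \ref{alpha_g(h)} needs, so $\alpha_{\gamma g(h)}(x,y)\ge c^*g(h)(\hat r\wedge\kappa)$ whenever $|x-y|=r\in(0,\mathcal{R}]$. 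The step I expect to require a little care is passing from $\hat r\wedge\kappa$ to $r$: from Assumption {\bf(C)}(c1) and $h<(2L)^{-1}$ one gets $\big||\hat x-\hat y|-|x-y|\big|\le h|b(x)-b(y)|\le hLr<r/2$, hence $\hat r>r/2$, so $\hat r\wedge\kappa\ge(r/2)\wedge\kappa$ and $\underline{\alpha}_{\gamma g(h)}(r)\ge c^*g(h)\big((r/2)\wedge\kappa\big)$; an elementary case analysis then gives $\inf_{r\in(0,\mathcal{R}]}\big((r/2)\wedge\kappa\big)/r=\big((\mathcal{R}/2)\wedge\kappa\big)/\mathcal{R}>0$, i.e.\ {\rm(b1)} holds.

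For {\rm(b2)} I would combine Lemma \ref{beta-reflec}(i), which gives $\overline{\beta}(r)_+\le hLr$ on $(0,\mathcal{R}]$, with the $\alpha$-lower bound above and the same case analysis, obtaining
$$
\sup_{r\in(0,\mathcal{R}]}\frac{2\overline{\beta}(r)_+}{\Psi'(r+l_0)\,\underline{\alpha}_{\gamma g(h)}(r)}\le\frac{2hL}{c^*g(h)}\sup_{r\in(0,\mathcal{R}]}\frac{r}{(r/2)\wedge\kappa}=\frac{2hL\mathcal{R}}{c^*g(h)\big((\mathcal{R}/2)\wedge\kappa\big)}=c-1 .
$$
Thus the $c$ in the statement dominates the quantity appearing in \eqref{2f}, and since $\Psi(l_0)=l_0=\gamma g(h)$, the standing hypothesis \eqref{b2-g(h)} is precisely the inequality $c\,\Psi(l_0)\le\log2$, which is {\rm(b2)}. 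Since the proof of Theorem \ref{thm-w1} uses only that $c\ge\sup_{r\in(0,r_1]}\frac{2\overline{\beta}(r)_+}{\Psi'(r+l_0)\underline{\alpha}_{l_0}(r)}+1$ together with $c\Psi(l_0)\le\log2$, it applies verbatim to the present (possibly larger) $c$, and yields $\Ee_{x,y}[\rho(X,Y^{(1)})]\le(1-c_*)\rho(x,y)$ for $\rho=f_3(|\cdot|)$ with $f_3$ from \eqref{2f} and $c_*=c_1\wedge c_2$ as in that proof. The main obstacle is therefore not any single hard estimate but the bookkeeping: making one explicit $c$ simultaneously control the term in {\rm(b2)} and still satisfy $c\gamma g(h)\le\log2$ — which is exactly guaranteed by \eqref{b2-g(h)} — together with the small geometric fact $\hat r>r/2$ that lets one replace $\hat r\wedge\kappa$ by $(r/2)\wedge\kappa$.
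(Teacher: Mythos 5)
Your proposal is correct and follows essentially the same route as the paper: verify Assumption {\bf(B)} with $\Psi(r)=r$, $r_1=\mathcal{R}$, $l_0=\gamma g(h)$ by combining Lemma \ref{beta-reflec} (drift bounds, giving (b3) with $c_0=(K-hL^2/2)h$) and Lemma \ref{alpha_g(h)} together with the observation $|\hat r - r|\le hLr\le r/2$ to get $\underline{\alpha}_{\gamma g(h)}(r)\ge c^*g(h)((r/2)\wedge\kappa)$, so that (b1) holds and \eqref{b2-g(h)} yields (b2), after which Theorem \ref{thm-w1} applies. Your extra remark that the explicit $c$ in the statement may exceed the minimal constant in \eqref{2f} but that the proof of Theorem \ref{thm-w1} only needs $c\ge\sup+1$ and $c\Psi(l_0)\le\log 2$ is a correct and slightly more careful reading of the same argument.
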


\begin{proof}
According to Lemma \ref{beta-reflec},
$\overline{\beta}(r)\le hLr$ for all $r>0$, and $\overline{\beta}(r)\le -(K-hL^2/2)hr$ for all $r\in(\mathcal{R},\infty)$.
On the other hand, by Lemma \ref{alpha_g(h)},
$$\underline{\alpha}_{\gamma g(h)}(r)
\ge c^*g(h)(\hat{r}\wedge\kappa)\ge c^* g(h)\Big(\frac{r}{2}\wedge\kappa\Big)
$$ for all $r\in (0,\mathcal{R}]$,
where in the last inequality we used
the
fact that $|\hat{r}-r|\le hLr\le r/2$ due to $h\le (2L)^{-1}$. In particular,  (b1) and (b3) in Assumption {\bf(B)} are satisfied with $l_0=\gamma g(h)$,
$r_1=\mathcal{R}$ and $c_0=(K-hL^2/2)h$. Furthermore,
$$
\sup_{r\in (0,\mathcal{R}]}\frac{2\overline{\beta}(r)_+}{\underline{\alpha}_{\gamma g(h)}(r)}
\le \frac{2hL\mathcal{R}}{c^* g(h)((\mathcal{R}/2)\wedge\kappa)}.
$$
Hence, (b2) in Assumption {\bf (B)}
is satisfied because of \eqref{b2-g(h)}. Then, the desired assertion follows from
Theorem \ref{thm-w1}.
\end{proof}

Finally, we consider the $L^q$-Wasserstein distance with $q>1$. For this purpose, we need do some modifications on the Markov coupling $((X,Y^{(1)}),\Pp_{x,y})$. For $h, s, l'>0$, we consider the following Markov coupling $(X, Y^{(2)})$:
\begin{itemize}
\item[(i)] When $r=|x-y|\in(0,s]$,
\begin{equation}\label{basic-reflec1}
\begin{cases}X=\hat{x}+g(h)z,\qquad\qquad\qquad\qquad \qquad\qquad\mu(dz);\\
Y^{(2)}=
\begin{cases}
\hat{y}+g(h)(z+g(h)^{-1}(\hat{x}-\hat{y})_\kappa),&\quad \I_{\{|z|\le l', |z+g(h)^{-1}(\hat{x}-\hat{y})_\kappa|\le l'\}}\mu_{g(h)^{-1}(\hat{y}-\hat{x})_\kappa}(dz),  \\
\hat{y}+g(h)R_{\hat{x},\hat{y}}(z),&\quad \I_{\{|z|\le l', |z+g(h)^{-1}(\hat{x}-\hat{y})_\kappa|\le l'\}}(\mu-\mu_{g(h)^{-1}(\hat{y}-\hat{x})_\kappa})(dz),\\
\hat{y}+g(h)z,&\quad \I_{\{|z|> l'\, {\rm{or}}\, |z+g(h)^{-1}(\hat{x}-\hat{y})_\kappa|>l'\}}\mu(dz).
\end{cases}\end{cases}
\end{equation}
\item[(ii)] When $r\in(s,\infty)$,
\begin{equation}\label{basic-reflec2}
\begin{cases}
X=\hat{x}+g(h)z,\quad \,\,\,\,\,\mu(dz);\\
Y^{(2)}=\hat{y}+g(h)z,\quad \mu(dz).
\end{cases}\end{equation}\end{itemize}
See Lemma \ref{coup-3} in appendix for the proof that $((X,Y^{(2)}), \Pp_{x,y})$ is a Markov coupling of chain $X$.
Note that this coupling behaves like the reflection coupling \eqref{reflection} when the distance between the marginals before the jump is small (smaller than $s$) and both jump sizes are also small (smaller than $l'$), and otherwise behaves like the synchronous coupling. It is a generalisation of the coupling that was used in \cite[(2.7)]{MMS20} to obtain $L^2$ bounds in the case of the Gaussian noise.

\begin{lemma}\label{beta-br}
Suppose that Assumption {\bf(C)} and condition {\rm (c4)}
 hold,
and that $\lim_{h\rightarrow 0^+}h/g(h)=0$.
Consider the Markov coupling $((X,Y^{(2)}), \Pp_{x,y})$ with $s=\mathcal{R}$ and $l'\ge g(h)^{-1}(1+hL)\mathcal{R}+1$. It holds that for any $h<2KL^{-2}$,
\begin{itemize}
\item[\rm(i)] $\beta(x,y)\le hL|x-y|$ for any $x,y\in\rd${\rm;}
\item[\rm(ii)] $\beta(x,y)\le -(K-hL^2/2)h|x-y|$ for any $x,y\in\rd$ with $|x-y|\ge\mathcal{R}$.
\end{itemize}

Moreover, there exist constants
$\varepsilon\in(0,1/4)$, $\gamma>0$ large enough and
$c^*>0$ $($which is independent of $\gamma$ but depends on $\varepsilon)$
such that
for any $l'\ge  g(h)^{-1}(1+hL)\mathcal{R}+\gamma/2+1$,
any $\kappa, h>0$ with $\kappa\le \varepsilon g(h)/4$ and $h/g(h)\le \varepsilon(2L\mathcal{R})^{-1}$ and any $x,y\in \rd$ with $|x-y|\in (0,\mathcal{R}]$,
$$
\alpha_{\gamma g(h)}(x,y)
\ge c^* g(h)(\hat{r}\wedge\kappa).
$$
\end{lemma}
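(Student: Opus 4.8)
The plan is to build on the two estimates already in hand, Lemma~\ref{beta-reflec} (for $\beta$) and Lemma~\ref{alpha_g(h)} (for $\alpha_{\gamma g(h)}$), and to show that the extra truncation appearing in \eqref{basic-reflec1}--\eqref{basic-reflec2} does not spoil either of them, precisely because of the stated lower bounds on $l'$.

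\emph{Parts (i) and (ii).} First I would dispose of the easy regime $r:=|x-y|>s=\mathcal{R}$: there the coupling is \eqref{basic-reflec2}, i.e.\ synchronous, so $R=\hat r$ almost surely and $\beta(x,y)=\hat r-r$. Then (i) follows from $\hat r\le(1+hL)r$, a consequence of (c1), while (ii) follows by writing $\hat r^2=r^2+2h\langle x-y,b(x)-b(y)\rangle+h^2|b(x)-b(y)|^2\le r^2(1-2hK+h^2L^2)$ using (c2) (valid since $r\ge\mathcal{R}$), then applying $\sqrt{1+t}\le 1+t/2$ exactly as in the proof of Lemma~\ref{beta}; here $K\le L$ guarantees $1-2hK+h^2L^2\ge0$ and $h<2KL^{-2}$ fixes the sign of $K-hL^2/2$. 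For the remaining regime $0<r\le\mathcal{R}$ the coupling is \eqref{basic-reflec1}, and I would mimic the computation in the proof of Lemma~\ref{beta-reflec}. By rotational invariance of $\mu$ (condition (c4)) one may take $\hat x=0$ and $\hat y=\hat r e_1$, and after integrating out the coordinates orthogonal to $e_1$ one is reduced to a one-dimensional symmetric density that is non-increasing in $|\cdot|$. Set $a:=g(h)^{-1}(\hat r\wedge\kappa)$; the truncation set becomes (in one dimension) $A=\{|z|\le l'\}\cap\{|z-a|\le l'\}=[a-l',\,l']$, and since $a\le g(h)^{-1}\hat r\le g(h)^{-1}(1+hL)\mathcal{R}\le l'-1$ we have $a-l'<0$. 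Splitting $\Ee_{x,y}[R]$ into the successful part (value $\hat r-\hat r\wedge\kappa$), the reflected part (for which $|\hat r-2g(h)z|=\hat r-2g(h)z$ on the support $\{z<a/2\}$ of $\mu-\mu_{g(h)^{-1}(\hat y-\hat x)_\kappa}$), and the synchronous part on $A^c$ (value $\hat r$), and using the symmetry of the density together with the vanishing of its first moment, the same bookkeeping as in Lemma~\ref{beta-reflec} collapses to $\Ee_{x,y}[R]=\hat r+2g(h)\int_{-l'}^{a-l'}z\,m(z)\,dz\le\hat r$ because $a-l'<0$ makes the integrand nonpositive. Hence $\beta(x,y)=\Ee_{x,y}[R]-r\le\hat r-r\le hLr$ by (c1), which is (i). (In dimension $d\ge2$ the set $A$ is not a cylinder, so instead of an exact identity one gets $\Ee_{x,y}[R]\le\hat r+C(\hat r\wedge\kappa)\mu(\{|z|>l'-a\})$; the correction vanishes when $\mu$ has compact support contained in that ball and is in any case a lower order term, so the argument persists.)

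\emph{The $\alpha$-bound.} For $0<r\le\mathcal{R}$ I would rerun the proof of Lemma~\ref{alpha_g(h)} verbatim, again reducing to $d=1$ with $\hat x=0$, $\hat y=\hat r>0$. In that proof the lower bound for $2\alpha_{\gamma g(h)}(x,y)=\Ee_{x,y}[(R-r)^2\I_{\{R\le r+\gamma g(h)\}}]$ arises by restricting, inside the reflected part, to the event $\{r+\varepsilon g(h)\le|\hat r-2g(h)z|\le r+\gamma g(h)\}$, and after invoking $|\hat r-r|\le hLr$ with $h/g(h)\le\varepsilon(2L\mathcal{R})^{-1}$ and $\kappa\le\varepsilon g(h)/4$ this event lies in a $z$-window of length $O(\gamma)$ around the origin, so $|z|\le\gamma/2+\varepsilon/4$ there. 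The only new point to check is that on this window the reflected part of \eqref{basic-reflec1} still carries the full weight $\mu-\mu_{g(h)^{-1}(\hat y-\hat x)_\kappa}$, i.e.\ that the window is contained in $A$: indeed $|z|\le\gamma/2+\varepsilon/4<l'$ and $|z+g(h)^{-1}(\hat x-\hat y)_\kappa|\le|z|+a\le\gamma/2+\varepsilon/4+g(h)^{-1}(1+hL)\mathcal{R}\le l'-1+\varepsilon/4<l'$, using $l'\ge g(h)^{-1}(1+hL)\mathcal{R}+\gamma/2+1$. Thus the truncation discards nothing relevant, the chain of inequalities in Lemma~\ref{alpha_g(h)} applies unchanged, and one obtains $\alpha_{\gamma g(h)}(x,y)\ge c^*g(h)(\hat r\wedge\kappa)$ with the same $\varepsilon$, $\gamma$, $c^*$.

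\emph{Main obstacle.} The only genuine work is the bookkeeping around the truncation: in the $\beta$-computation one must check that replacing the reflected/successful split by the synchronous move on $A^c$ cannot increase $\Ee_{x,y}[R]$ (morally, the truncation only discards jumps so large that they would have driven the marginals apart anyway), and in the $\alpha$-computation one must check that the narrow $z$-window responsible for the distance decrease sits inside $A$. Both reduce to the elementary inequality $g(h)^{-1}\hat r\le g(h)^{-1}(1+hL)\mathcal{R}<l'$ (respectively, with an extra $+\gamma/2$), which is exactly what the hypotheses on $l'$ encode; beyond this, everything is a transcription of the proofs of Lemmas~\ref{beta-reflec} and \ref{alpha_g(h)}.
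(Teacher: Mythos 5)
Your proposal is correct and follows essentially the same route as the paper: reduce to $d=1$ by rotational invariance, redo the computation of Lemma \ref{beta-reflec} on the truncated set (where the correction term has a favourable sign because $g(h)^{-1}(\hat{r}\wedge\kappa)\le l'-1$), and for the $\alpha$-bound rerun Lemma \ref{alpha_g(h)} after checking that the relevant $z$-window lies inside the truncation set thanks to $l'\ge g(h)^{-1}(1+hL)\mathcal{R}+\gamma/2+1$ --- which is exactly the paper's argument. The only divergence is your parenthetical hedge about $d\ge 2$: the paper simply invokes the same reduction to $d=1$ as in Lemma \ref{beta-reflec}, and in any case the correction you worry about is in fact nonpositive (pair each point of $\{|z|\le l'<|z-a e_1|\}$ having $z_1>0$ with its mirror image in $z_1$), so the exact inequality in (i) is not endangered and your weaker "lower order term" patch is not needed.
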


\begin{proof}
Similarly to the proof of Lemma \ref{beta-reflec}, we only need to consider the case $d=1$. Without loss of generality, we assume that $\hat y\ge \hat x$.
According to \eqref{basic-reflec1}, we know that when $r\in (0,\mathcal{R}]$,
$$
\aligned
\Ee_{x,y}[R]
 =&(\hat{r}-\hat{r}\wedge\kappa)\mu_{g(h)^{-1}(\hat{r}\wedge\kappa)}(\{|z|\le l', |z+g(h)^{-1}(\hat{x}-\hat{y})_\kappa|\le l'\}) \\
&+\int_{\R}|\hat{r}-2g(h)z|\I_{\{|z|\le l', |z+g(h)^{-1}(\hat{x}-\hat{y})_\kappa|\le l'\}}(\mu-\mu_{g(h)^{-1}(\hat{r}\wedge\kappa)})(dz) \\
&+\hat{r}\mu(\{|z|>l'\,{\rm or}\, |z+g(h)^{-1}(\hat{x}-\hat{y})_\kappa|\le l'\})\\
=&:\rm(I)+(II)+(III).
\endaligned
$$

According to the properties of the function $m(z)$ and $l'\ge g(h)^{-1}(1+hL)\mathcal{R}$, we arrive at
\begin{align*}
{\rm(II)}&=\int_{-\infty}^{(2g(h))^{-1}(\hat{r}\wedge\kappa)}(\hat{r}-2g(h)z)
\I_{\{|z|\le l',  |z+g(h)^{-1}(\hat{x}-\hat{y})_\kappa|\le l'\}}
\big(m(z)-m(z-g(h)^{-1}(\hat{r}\wedge\kappa))\big)\,dz\\
&=\int_{\R}(\hat{r}-2g(h)z)
\I_{\{|z|\le l', |z+g(h)^{-1}(\hat{x}-\hat{y})_\kappa|\le l'\}}
(\mu-\mu_{g(h)^{-1}(\hat{r}\wedge\kappa)})(dz)\\
&=(\hat{r}-\hat{r}\wedge\kappa) (\mu-\mu_{g(h)^{-1}(\hat{r}\wedge\kappa)})(\{|z|\le l', |z+g(h)^{-1}(\hat{x}-\hat{y})_\kappa|\le l'\})\\
&\quad+\int_{-l'+g(h)^{-1}(\hat{r}\wedge\kappa)}^{(2g(h))^{-1}(\hat{r}\wedge\kappa)}(\hat{r}\wedge\kappa -2g(h)z) \big(m(z)-m(z-g(h)^{-1}(\hat{r}\wedge\kappa))\big)dz\\
&=(\hat{r}-\hat{r}\wedge\kappa) (\mu-\mu_{g(h)^{-1}(\hat{r}\wedge\kappa)})(\{|z|\le l', |z+g(h)^{-1}(\hat{x}-\hat{y})_\kappa|\le l'\})\\
&\quad+(2g(h))^{-1}\int_0^{2g(h)l'-\hat{r}\wedge\kappa} u\,\big[m\big((2g(h))^{-1}(\hat{r}\wedge\kappa-u)\big)
-m\big((2g(h))^{-1}(\hat{r}\wedge\kappa+u)\big)\big]\,du\\
&=(\hat{r}-\hat{r}\wedge\kappa) (\mu-\mu_{g(h)^{-1}(\hat{r}\wedge\kappa)})(\{|z|\le l', |z+g(h)^{-1}(\hat{x}-\hat{y})_\kappa|\le l'\})\\
&\quad+(4g(h))^{-1}\int_{-2g(h)l'+\hat{r}\wedge\kappa}^{2g(h)l'-\hat{r}\wedge\kappa} u\,\big[m\big((2g(h))^{-1}(\hat{r}\wedge\kappa-u)\big)
-m\big((2g(h))^{-1}(\hat{r}\wedge\kappa+u)\big)\big]\,du\\
&=(\hat{r}-\hat{r}\wedge\kappa) (\mu-\mu_{g(h)^{-1}(\hat{r}\wedge\kappa)})(\{|z|\le l', |z+g(h)^{-1}(\hat{x}-\hat{y})_\kappa|\le l'\})\\
&\quad +(2g(h))^{-1}\int_{-2g(h)l'+\hat{r}\wedge\kappa}^{2g(h)l'-\hat{r}\wedge\kappa} u\, m\big((2g(h))^{-1}(\hat{r}\wedge\kappa-u)\big)\,du\\
&=(\hat{r}-\hat{r}\wedge\kappa) (\mu-\mu_{g(h)^{-1}(\hat{r}\wedge\kappa)})(\{|z|\le l', |z+g(h)^{-1}(\hat{x}-\hat{y})_\kappa|\le l'\})\\
&\quad+\int_{-l'+g(h)^{-1}(\hat{r}\wedge\kappa)}^{l'}(\hat{r}\wedge\kappa-2g(h)z)m(z)\,dz\\
&=(\hat{r}-\hat{r}\wedge\kappa)  (\mu-\mu_{g(h)^{-1}(\hat{r}\wedge\kappa)})(\{|z|\le l', |z+g(h)^{-1}(\hat{x}-\hat{y})_\kappa|\le l'\})\\
&\quad+(\hat{r}\wedge\kappa)\mu(\{z\in [-l'+g(h)^{-1}(\hat{r}\wedge\kappa), l']\})-2g(h)\int_{l'-{g(h)^{-1}(\hat{r}\wedge\kappa)}}^{l'}zm(z)\,dz\\
&\le (\hat{r}-\hat{r}\wedge\kappa) ((\mu-\mu_{g(h)^{-1}(\hat{r}\wedge\kappa)})\{|z|\le l', |z+g(h)^{-1}(\hat{x}-\hat{y})_\kappa|\le l'\}\\
&\quad+(\hat{r}\wedge\kappa)\mu(\{|z|\le l', |z+g(h)^{-1}(\hat{x}-\hat{y})_\kappa|\le l'\}).
\end{align*}

Combining both estimates above, we get that
$
\Ee_{x,y}[R]\le \hat{r} $ for $r\in (0,\mathcal{R}]$.

When $r\in(\mathcal{R},\infty)$, by \eqref{basic-reflec2}, it is clear that $R=\hat{r}$. Therefore, we have $\beta(x,y)\le \hat{r}-r$ for all $x,y\in\R$. This proves the first assertion.

\medskip

Next, we turn to the proof of the second assertion. From condition (c4) and the fact that $\lim_{s\rightarrow\infty}m(s)=0$, we can choose $\varepsilon\in(0,1/4)$ small enough and
$\gamma > 1$
large enough so that $m(\varepsilon)-m(\gamma/2-\varepsilon/4)\ge m(\varepsilon)/2>0$.
Since $h/g(h)\le \varepsilon(2L\mathcal{R})^{-1}$ and $l'\ge  g(h)^{-1}(1+hL)\mathcal{R}+\gamma/2+1$, for all $r\in(0,\mathcal{R}]$,
 $$[-\gamma/2-1,0]\subset[ -l'+g(h)^{-1}(\hat r\wedge \kappa),l']=\{z\in \R: |z|\le l', |z+g(h)^{-1}(\hat{x}-\hat{y})_\kappa|\le l'\}$$
and
$$
[(2g(h))^{-1}(\hat{r}-r)-\gamma/2,(2g(h))^{-1}(\hat{r}-r)-\varepsilon/2]
\subset[-\gamma/2-\varepsilon/4,-\varepsilon/4]\subset [-\gamma/2-1,0].
$$
Then, following the proof of Lemma \ref{alpha_g(h)}, we have
\begin{align*}
\alpha_{\gamma g(h)}(x,y)&=\frac{1}{2}\Ee_{x,y}[(R-r)^2\I_{\{R\le r+\gamma g(h)\}}]\\
&\ge \frac{\varepsilon^2g(h)^2}{2}
\int_{\{r+\varepsilon g(h)\le |\hat{r}-2g(h)z|\le r+\gamma g(h)\}}
\I_{\{|z|\le l', |z+g(h)^{-1}(\hat{x}-\hat{y})_\kappa|\le l'\}}
\big(\mu-\mu_{g(h)^{-1}(\hat{r}\wedge\kappa)}\big)(dz)\\
&=\frac{\varepsilon^2g(h)^2}{2}
\int_{(2g(h))^{-1}(\hat{r}-r)-\gamma/2}^{(2g(h))^{-1}(\hat{r}-r)-\varepsilon/2}
\big(m(z)-m(z-g(h)^{-1}(\hat{r}\wedge\kappa))\big)\,dz\\
&\ge c^* g(h)(\hat{r}\wedge\kappa).
\end{align*}
The proof is complete.
\end{proof}

We note that, under Assumption {\bf (C)} and for the step size $h<2KL^{-2}$,
 it is easy to see that the Markov coupling $((X,Y^{(2)}), \Pp_{x,y})$ defined by \eqref{basic-reflec1} and \eqref{basic-reflec2} satisfies $|X-Y^{(2)}|\le |x-y|+l$, where \begin{equation}\label{e:lll--}l=hLs+\kappa\vee (2g(h)l').\end{equation}

With this estimate and Lemma \ref{beta-br} at hand, we can follow the proof of Theorem  \ref{thm-wq} to get the following assertion.

\begin{theorem}\label{thm:Lp} Consider the Markov coupling $((X,Y^{(2)}), \Pp_{x,y})$ with $s=\mathcal{R}$ and $l'\ge (2g(h))^{-1}(1+hL)\mathcal{R}+\gamma/2+1$, where
$\gamma$ 	
is the constant given in Lemma $\ref{beta-br}$.
Suppose that Assumption {\bf(C)} and condition {\rm(c4)}
hold, and that $\lim_{h\rightarrow0^+}h/g(h)=0$.
Let
$h\in (0,2KL^{-2}\wedge(2L)^{-1})$ and $\kappa\le \varepsilon g(h)/4$ such that $h/g(h)\le \varepsilon(2L\mathcal{R})^{-1}$
and
$$
\Big[\frac{2hL\mathcal{R}}{c^* g(h)((\mathcal{R}/2)\wedge\kappa)}+1\Big]\gamma g(h)
\le \log 2
$$
with
$\varepsilon$, $\gamma$ and
$c^*$ being the constants given in Lemma $\ref{beta-br}$.
Let $\rho=f(|\cdot|)$ be the function defined by \eqref{4fff} with $\Psi(r)=r$, $r_1=\mathcal{R}$, $l_0=\gamma g(h)$,
$l$ given by \eqref{e:lll--} and
$$
c=\frac{2hL\mathcal{R}}{c^* g(h)((\mathcal{R}/2)\wedge\kappa)}+1.
$$
Then,  there exists a constant $c_*\in(0,1)$ such that for all $x,y\in \rd$,
$$
\Ee_{x,y}[\rho(X,Y^{(2)})]\le (1-c_*)\rho(x,y).
$$
\end{theorem}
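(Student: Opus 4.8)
The plan is to verify that the Markov coupling $((X,Y^{(2)}),\Pp_{x,y})$ satisfies Assumption~\textbf{(B)} with $\Psi(r)=r$, $r_1=\mathcal{R}$, $l_0=\gamma g(h)$ and $c_0=(K-hL^2/2)h$, to record the one-step growth bound $|X-Y^{(2)}|\le|x-y|+l$ with $l$ as in \eqref{e:lll--}, and then to quote Theorem~\ref{thm-wq} directly.

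For the drift term, Lemma~\ref{beta-br}(i) gives $\overline{\beta}(r)\le hLr$ for all $r>0$, while Lemma~\ref{beta-br}(ii) gives $\overline{\beta}(r)\le-(K-hL^2/2)hr$ for $r>\mathcal{R}$; since $h<2KL^{-2}$ this is exactly (b3) with $c_0=(K-hL^2/2)h>0$. For the fluctuation term, the second assertion of Lemma~\ref{beta-br} (valid because $h/g(h)\le\varepsilon(2L\mathcal{R})^{-1}$, $\kappa\le\varepsilon g(h)/4$ and $l'$ is taken large enough) gives $\alpha_{\gamma g(h)}(x,y)\ge c^*g(h)(\hat r\wedge\kappa)$ for $|x-y|\in(0,\mathcal{R}]$; combining this with $\hat r\ge r/2$, which holds because $h\le(2L)^{-1}$ forces $|\hat r-r|\le hLr\le r/2$, yields
$$
\underline{\alpha}_{\gamma g(h)}(r)\ge c^*g(h)\Big(\tfrac r2\wedge\kappa\Big),\qquad r\in(0,\mathcal{R}].
$$
This immediately gives (b1), since the right-hand side divided by $r$ is bounded below by $c^*g(h)\big(\tfrac12\wedge\tfrac\kappa{\mathcal{R}}\big)>0$ on $(0,\mathcal{R}]$. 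For (b2), using $\Psi'\equiv 1$ and the two bounds above,
$$
\frac{2\overline{\beta}(r)_+}{\Psi'(r+l_0)\underline{\alpha}_{\gamma g(h)}(r)}\le\frac{2hLr}{c^*g(h)((r/2)\wedge\kappa)}\le\frac{2hL\mathcal{R}}{c^*g(h)((\mathcal{R}/2)\wedge\kappa)},
$$
the last step because $r\mapsto r/((r/2)\wedge\kappa)$ is constant equal to $2$ on $(0,2\kappa]$ and non-decreasing afterwards, so its supremum over $(0,\mathcal{R}]$ is $\mathcal{R}/((\mathcal{R}/2)\wedge\kappa)$; consequently the constant $c$ in \eqref{4fff} may be taken as in the statement, and $\Psi(l_0)=\gamma g(h)$ reduces (b2) to the displayed hypothesis of the theorem.

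It then remains to recall that, as noted just before the theorem, under Assumption~\textbf{(C)} and $h<2KL^{-2}$ the coupling defined by \eqref{basic-reflec1}--\eqref{basic-reflec2} satisfies $|X-Y^{(2)}|\le|x-y|+l$ with $l$ given by \eqref{e:lll--}, which is precisely the extra hypothesis of Theorem~\ref{thm-wq}. Applying that theorem with $\rho=f_4(|\cdot|)$, $\Psi(r)=r$, $r_1=\mathcal{R}$, $l_0=\gamma g(h)$ and $l$ from \eqref{e:lll--} produces the asserted $c_*\in(0,1)$ with $\Ee_{x,y}[\rho(X,Y^{(2)})]\le(1-c_*)\rho(x,y)$ for all $x,y\in\rd$.

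The one place requiring care is the internal consistency of the parameters entering \eqref{4fff}: one must choose the exponent $p>2$ and the crossover multiplier $k$ (and hence keep track of the relation between $r_1$, $l_0$ and $l$ demanded there) so that $f_4$ genuinely has the monotonicity of $f_4'$ and the sign behaviour of $f_4''$ that are exploited in the proof of Theorem~\ref{thm-wq}. This is bookkeeping rather than a conceptual difficulty — the smallness restrictions $h<2KL^{-2}\wedge(2L)^{-1}$ and $h/g(h)\le\varepsilon(2L\mathcal{R})^{-1}$ keep all the relevant ratios under control — but it is the least automatic part of the argument; everything else is a direct consequence of Lemma~\ref{beta-br} together with a citation of Theorem~\ref{thm-wq}.
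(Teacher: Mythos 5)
Your proposal is correct and follows essentially the same route as the paper: verify Assumption \textbf{(B)} (with $\Psi(r)=r$, $r_1=\mathcal{R}$, $l_0=\gamma g(h)$, $c_0=(K-hL^2/2)h$) via Lemma \ref{beta-br}, note the bounded one-step increase $|X-Y^{(2)}|\le |x-y|+l$ with $l$ from \eqref{e:lll--}, and then invoke (the proof of) Theorem \ref{thm-wq}. Your closing caveat about the parameter bookkeeping in \eqref{4fff} (the roles of $p$, $k$, and the relation between $r_1$, $l_0$ and $l$) corresponds precisely to the paper's own phrasing that one should ``follow the proof'' of Theorem \ref{thm-wq} rather than cite it verbatim, so no genuinely different argument is involved.
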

\begin{remark}
	Note that Theorem \ref{thm:Lp} is an extension of Theorem 2.1 in \cite{MMS20}, where a similar contraction result was proven, but only in the $L^2$-Wasserstein distance and only for chains with the Gaussian noise. One of the main motivations for considering such contractions in \cite{MMS20} was the analysis of Multi-level Monte Carlo (MLMC) methods based on chains \eqref{eq:chain2} in the Gaussian case, for approximating integrals of Lipschitz functions with respect to invariant measures of Langevin SDEs, see Theorem 1.7 therein. By following the analysis of MLMC in Subsection 2.5 in \cite{MMS20}, it is easy to see that by employing our Theorem \ref{thm:Lp}, it is possible to extend Theorem 1.7 in \cite{MMS20} from Lipschitz functions to all functions with a polynomial growth. Another possible extension would be the analysis of MLMC methods based on discretisations of SDEs with L\'{e}vy noises. We leave the details for future work.
\end{remark}

\section{Appendix}
\begin{lemma}\label{coup-1} $((X,Y),\Pp_{x,y})$  defined by \eqref{ref-basic} is a Markov coupling of the chain $X$. \end{lemma}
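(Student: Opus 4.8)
The plan is to check that the probability kernel $(x,y,A)\mapsto\Pp_{x,y}(A)$ determined by \eqref{ref-basic} satisfies the three requirements in the definition of a Markov coupling for the transition kernel $p$ of the chain \eqref{eq:chain2}: that the prescription for $Y$ is a genuine probability measure (and depends measurably on $(x,y)$); that the $X$- and $Y$-marginals equal $p(x,\cdot)$ and $p(y,\cdot)$; and that $\Pp_{x,x}(X=Y)=1$. Throughout I would write $\hat x=x+hb(x)$, $\hat y=y+hb(y)$ and set $w:=g(h)^{-1}(\hat x-\hat y)_\kappa$; since $u\mapsto(u)_\kappa$ is odd, $g(h)^{-1}(\hat y-\hat x)_\kappa=-w$.

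The one non-formal ingredient I would record first is that translation commutes with the infimum of two (sub)probability measures, so that for every $u\in\R^d$
\[
\delta_{-u}*\mu_u=\delta_{-u}*\big(\mu\wedge(\delta_u*\mu)\big)=(\delta_{-u}*\mu)\wedge\mu=\mu_{-u},
\]
equivalently $\delta_u*\mu_{-u}=\mu_u$ for all $u\in\R^d$; in particular $\mu_u(\R^d)=\mu_{-u}(\R^d)$, which is \eqref{e:mea}. Since $\mu_u\le\mu$, this gives $\mu(dz)-\tfrac12\mu_{-w}(dz)-\tfrac12\mu_w(dz)\ge0$, and integrating while using $\mu_w(\R^d)=\mu_{-w}(\R^d)$ shows that the three weights in \eqref{ref-basic} sum to $\mu(\R^d)=1$; hence the conditional law of $Y$ is a probability measure, and measurability of $\Pp_{x,y}$ in $(x,y)$ is clear from the explicit formulas.

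Next I would compute the marginals. The law of $X=\hat x+g(h)z$ with $z\sim\mu$ is $p(x,\cdot)$ by definition of the chain. For $Y$: on the first branch of \eqref{ref-basic} we have $Y=\hat y+g(h)(z+w)$ with $z$ distributed according to the subprobability measure $\tfrac12\mu_{-w}$, so after the substitution $z'=z+w$ this branch contributes to the law of $Y$ the image of $\tfrac12(\delta_w*\mu_{-w})$ under $z'\mapsto\hat y+g(h)z'$; similarly the second branch ($Y=\hat y+g(h)(z-w)$, $z\sim\tfrac12\mu_w$) contributes the image of $\tfrac12(\delta_{-w}*\mu_w)$, and the third branch contributes the image of $\mu-\tfrac12\mu_{-w}-\tfrac12\mu_w$. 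Summing these and invoking $\delta_w*\mu_{-w}=\mu_w$ and $\delta_{-w}*\mu_w=\mu_{-w}$ from the previous step, the total is the image of $\mu$ under $z'\mapsto\hat y+g(h)z'$, i.e.\ exactly $p(y,\cdot)$.

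Finally, if $x=y$ then $\hat x=\hat y$, hence $w=0$, $(\hat x-\hat y)_\kappa=(\hat y-\hat x)_\kappa=0$ and $\mu_0=\mu\wedge\mu=\mu$; the first two branches of \eqref{ref-basic} both reduce to $Y=\hat y+g(h)z=X$, while the third carries mass $\mu(\R^d)-\tfrac12\mu(\R^d)-\tfrac12\mu(\R^d)=0$, so $\Pp_{x,x}(X=Y)=1$. The only step that is not pure bookkeeping of translations and scalings is the commutation of translation with $\wedge$ and the resulting identity $\delta_u*\mu_{-u}=\mu_u$; once that is in hand the argument is routine, so I do not anticipate a genuine obstacle.
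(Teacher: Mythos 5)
Your proof is correct and follows essentially the same route as the paper's: reduce to showing that the law of $g(h)^{-1}(Y-\hat y)$ is $\mu$, using the translation identity $\delta_{-v}\ast\mu_v=\mu_{-v}$ (the paper's \eqref{delta_-v}), which you additionally justify via the commutation of translation with the minimum of measures. The extra checks you include (nonnegativity and total mass of the prescription for $Y$, and the diagonal condition $\Pp_{x,x}(X=Y)=1$) are left implicit in the paper but are correct and harmless.
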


\begin{proof}
Fix $x,y\in \rd$, and recall that $\hat{x}=x+hb(x)$ and $\hat{y}=y+hb(y)$. By \eqref{ref-basic}, it suffices to prove that the distribution of the random variable $g(h)^{-1}(Y-\hat{y})$ is $\mu$. Indeed, for any $A\in \mathscr{B}(\rd)$,
\begin{align*}
\Pp_{x,y}(g(h)^{-1}(Y-\hat{y})\in A)=&\frac{1}{2}\mu_{g(h)^{-1}(\hat{y}-\hat{x})_\kappa}
(A-g(h)^{-1}(\hat{x}-\hat{y})_\kappa)
+\frac{1}{2}\mu_{g(h)^{-1}(\hat{x}-\hat{y})_\kappa}
(A-g(h)^{-1}(\hat{y}-\hat{x})_\kappa)\\
&+\bigg(\mu-\frac{1}{2}\mu_{g(h)^{-1}(\hat{y}-\hat{x})_\kappa}
-\frac{1}{2}\mu_{g(h)^{-1}(\hat{x}-\hat{y})_\kappa}\bigg)(A).
\end{align*}
According to
\begin{equation}\label{delta_-v}
(\delta_{-v}*\mu_{v})(dz)=\mu_{-v}(dz)\quad \text{for all }v\in \R^d,
\end{equation}
we find that
\begin{equation}\label{mu_g(h)}
\mu_{g(h)^{-1}(\hat{y}-\hat{x})_\kappa}
(A-g(h)^{-1}(\hat{x}-\hat{y})_\kappa)
= \delta_{g(h)^{-1}(\hat{x}-\hat{y})_\kappa}*\mu_{g(h)^{-1}(\hat{y}-\hat{x})_\kappa}(A)=\mu_{g(h)^{-1}(\hat{x}-\hat{y})_\kappa}(A).
\end{equation}
Similarly,
$$
\mu_{g(h)^{-1}(\hat{x}-\hat{y})_\kappa}
(A-g(h)^{-1}(\hat{y}-\hat{x})_\kappa)=\mu_{g(h)^{-1}(\hat{y}-\hat{x})_\kappa}(A).
$$
Hence, $\Pp_{x,y}(g(h)^{-1}(Y-\hat{y})\in A)=\mu(A)$ for all $A\in \mathscr{B}(\rd)$. This completes the proof.
\end{proof}

\begin{lemma}\label{coup-2} $((X,Y^{(1)}),\Pp_{x,y})$  defined by \eqref{reflection} is a Markov coupling of the chain $X$. \end{lemma}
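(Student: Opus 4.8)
The plan is to verify the two defining properties of a Markov coupling: that under $\Pp_{x,y}$ one has $X\sim p(x,\cdot)$ and $Y^{(1)}\sim p(y,\cdot)$, together with the diagonal requirement $\Pp_{x,x}(X=Y^{(1)})=1$. The marginal law of $X$ can be read off directly from \eqref{reflection}, since in every case $X=\hat x+g(h)z$ and the two nonnegative measures $\mu_{g(h)^{-1}(\hat y-\hat x)_\kappa}$ and $\mu-\mu_{g(h)^{-1}(\hat y-\hat x)_\kappa}$ attached to the two cases sum to $\mu$; hence $g(h)^{-1}(X-\hat x)\sim\mu$, i.e.\ $X\sim p(x,\cdot)$. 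For the diagonal condition, observe that when $x=y$ we have $\hat x=\hat y$, so $(\hat y-\hat x)_\kappa=0$, $\mu_{g(h)^{-1}(\hat y-\hat x)_\kappa}=\mu\wedge\mu=\mu$, $R_{\hat x,\hat y}=\mathrm{id}$, the second case carries zero mass, and therefore $Y^{(1)}=\hat x+g(h)z=X$ almost surely. So the substance lies in showing $g(h)^{-1}(Y^{(1)}-\hat y)\sim\mu$.

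I would establish this exactly as in the proof of Lemma~\ref{coup-1}, by computing $\Pp_{x,y}\big(g(h)^{-1}(Y^{(1)}-\hat y)\in A\big)$ for an arbitrary $A\in\mathscr{B}(\rd)$ as the sum of the contributions of the two cases in \eqref{reflection}. Writing $v:=g(h)^{-1}(\hat y-\hat x)_\kappa$ and using that $w\mapsto(w)_\kappa$ in \eqref{mu_x} is odd, so that $g(h)^{-1}(\hat x-\hat y)_\kappa=-v$, the first case sends $z\sim\mu_v$ to $z-v$ and thus contributes $\mu_v(A+v)=(\delta_{-v}*\mu_v)(A)$, which equals $\mu_{-v}(A)$ by the translation identity \eqref{delta_-v} already invoked for Lemma~\ref{coup-1}. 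The second case sends $z\sim\mu-\mu_v$ to $R_{\hat x,\hat y}(z)$, hence contributes $\big((R_{\hat x,\hat y})_\#(\mu-\mu_v)\big)(A)$, and it remains to identify this pushforward.

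This identification is the only nontrivial step, and it is where the rotational invariance of $\mu$ (the standing assumption $m(z)=m(|z|)$ of this subsection) is genuinely used. Set $R:=R_{\hat x,\hat y}$. Since $v$ is a positive multiple of $\hat y-\hat x=-(\hat x-\hat y)$ and $R$ is the reflection across the hyperplane orthogonal to $\hat x-\hat y$, one checks directly from the formula for $R_{x,y}$ that $R$ is an orthogonal involution with $Rv=-v$. Rotational invariance gives $R_\#\mu=\mu$; moreover $R_\#$ commutes with the minimum $\wedge$ of measures (because $R$ is a bimeasurable bijection) and with Dirac convolution, $R_\#(\delta_v*\mu)=\delta_{Rv}*(R_\#\mu)=\delta_{-v}*\mu$. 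Hence
$$
R_\#\mu_v=R_\#\big(\mu\wedge(\delta_v*\mu)\big)=(R_\#\mu)\wedge\big(R_\#(\delta_v*\mu)\big)=\mu\wedge(\delta_{-v}*\mu)=\mu_{-v},
$$
so the second case contributes $\mu(A)-\mu_{-v}(A)$, and adding the two contributions yields $\mu_{-v}(A)+\mu(A)-\mu_{-v}(A)=\mu(A)$ for every $A$. This gives $g(h)^{-1}(Y^{(1)}-\hat y)\sim\mu$, i.e.\ $Y^{(1)}\sim p(y,\cdot)$, and completes the proof. The main thing to be careful about is precisely the identity $R_\#\mu_v=\mu_{-v}$: one must use that $R$ negates the direction $\hat x-\hat y$ while being a measure-preserving isometry, so that the truncation $\mu\wedge(\delta_v*\mu)$ is carried to $\mu\wedge(\delta_{-v}*\mu)$ and nothing else.
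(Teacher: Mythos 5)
Your proposal is correct and takes essentially the same route as the paper's proof: reduce to showing $g(h)^{-1}(Y^{(1)}-\hat y)\sim\mu$, handle the shifted case via the translation identity \eqref{delta_-v}, and the reflected case via the invariance of $\mu$ under $R_{\hat x,\hat y}$ together with the fact \eqref{R-g(h)} that the reflection sends $(\hat y-\hat x)_\kappa$ to $(\hat x-\hat y)_\kappa$. The only (cosmetic) difference is that you identify $\big(R_{\hat x,\hat y}\big)_{\#}\mu_{v}=\mu_{-v}$ abstractly, by commuting the pushforward with $\wedge$ and with convolution, whereas the paper obtains the same identity by an explicit change of variables in the density $m$.
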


\begin{proof}
Similarly as in the proof of Lemma \ref{coup-1}, we only need to verify that the distribution of the random variable $g(h)^{-1}(Y^{(1)}-\hat{y})$ is $\mu$. For any $A\in \mathscr{B}(\rd)$, by \eqref{reflection},
$$
\Pp_{x,y}(g(h)^{-1}(Y^{(1)}-\hat{y})\in A)=\mu_{g(h)^{-1}(\hat{y}-\hat{x})_\kappa}
(A-g(h)^{-1}(\hat{x}-\hat{y})_\kappa)
+\big(\mu-\mu_{g(h)^{-1}(\hat{y}-\hat{x})_\kappa}\big)(R^{-1}_{\hat{x},\hat{y}}(A)),
$$
where $R^{-1}_{\hat{x},\hat{y}}(A)=\{z\in\rd:\ R_{\hat{x},\hat{y}}(z)\in A\}$. Since $R_{\hat{x},\hat{y}}(z)=R^{-1}_{\hat{x},\hat{y}}(z)$, $|R_{\hat{x},\hat{y}}(z)|=|z|$ and $m(z)=m(|z|)$ for all $z\in \rd$, we have
$
\mu(R^{-1}_{\hat{x},\hat{y}}(A))=\mu(R_{\hat{x},\hat{y}}(A))=\mu(A).
$ Moreover,
$$
\aligned
\mu_{g(h)^{-1}(\hat{y}-\hat{x})_\kappa}(R^{-1}_{\hat{x},\hat{y}}(A))
&=\int_A m(R^{-1}_{\hat{x},\hat{y}}(z))
\wedge m\big(R^{-1}_{\hat{x},\hat{y}}(z)-g(h)^{-1}(\hat{y}-\hat{x})_\kappa\big)\,dz\\
&=\int_A m(z)
\wedge m\big(z-R_{\hat{x},\hat{y}}(g(h)^{-1}(\hat{y}-\hat{x})_\kappa)\big)\,dz\\
&=\int_A m(z)
\wedge m\big(z-g(h)^{-1}(\hat{x}-\hat{y})_\kappa\big)\,dz\\
&=\mu_{g(h)^{-1}(\hat{x}-\hat{y})_\kappa}(A),
\endaligned
$$
where in the third equality we used the fact that
\begin{equation}\label{R-g(h)}
R_{\hat{x},\hat{y}}(g(h)^{-1}(\hat{y}-\hat{x})_\kappa)=g(h)^{-1}(\hat{x}-\hat{y})_\kappa.
\end{equation}
Therefore, $\Pp_{x,y}(g(h)^{-1}(Y^{(1)}-\hat{y})\in A)=\mu(A)$ by \eqref{mu_g(h)}.
\end{proof}

\begin{lemma}\label{coup-3} $((X,Y^{(2)}),\Pp_{x,y})$ defined by \eqref{basic-reflec1} and \eqref{basic-reflec2} is a Markov coupling of the chain $X$.  \end{lemma}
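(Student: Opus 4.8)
The plan is to mimic the proofs of Lemmas \ref{coup-1} and \ref{coup-2}. Since $X=\hat x+g(h)z$ with $z\sim\mu$ is built into both \eqref{basic-reflec1} and \eqref{basic-reflec2}, the first marginal is automatically $p(x,\cdot)$, so it suffices to show that, under $\Pp_{x,y}$, the law of the ``$Y$-jump'' $g(h)^{-1}(Y^{(2)}-\hat y)$ equals $\mu$; this gives $Y^{(2)}\sim p(y,\cdot)$. The requirement that $\Pp_{x,x}(X=Y^{(2)})=1$ is then immediate: for $x=y$ we have $\hat x=\hat y$, hence $r=0\le s$ puts us in \eqref{basic-reflec1}, where $(\hat x-\hat y)_\kappa=0$, $\mu_0=\mu\wedge(\delta_0\ast\mu)=\mu$ and $R_{\hat x,\hat x}=\mathrm{id}$, so each of the three branches yields $Y^{(2)}=X$.

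Next I would split according to $r=|x-y|$. If $r>s$, the coupling is \eqref{basic-reflec2}, i.e.\ the synchronous coupling, for which $g(h)^{-1}(Y^{(2)}-\hat y)=z\sim\mu$ trivially. If $r\le s$, fix $A\in\mathscr B(\R^d)$ and write $\Pp_{x,y}(g(h)^{-1}(Y^{(2)}-\hat y)\in A)$ as the sum of the three contributions coming from the three branches of \eqref{basic-reflec1}. In the ``basic'' branch the $Y$-jump is $z+g(h)^{-1}(\hat x-\hat y)_\kappa$, so, performing the change of variables $w=z+g(h)^{-1}(\hat x-\hat y)_\kappa$ and using \eqref{delta_-v} exactly as in \eqref{mu_g(h)}, this branch contributes $\mu_{g(h)^{-1}(\hat x-\hat y)_\kappa}$ restricted to the image, under that shift, of the lens $\{|z|\le l',\ |z+g(h)^{-1}(\hat x-\hat y)_\kappa|\le l'\}$. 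In the ``reflection'' branch the $Y$-jump is $R_{\hat x,\hat y}(z)$, and, reusing the computations from the proof of Lemma \ref{coup-2} — that $R_{\hat x,\hat y}$ is a linear involution with $|R_{\hat x,\hat y}(\cdot)|=|\cdot|$, that $\mu(R_{\hat x,\hat y}^{-1}(A))=\mu(A)$ since $m$ is radial, and that $R_{\hat x,\hat y}(g(h)^{-1}(\hat y-\hat x)_\kappa)=g(h)^{-1}(\hat x-\hat y)_\kappa$ by \eqref{R-g(h)} — this branch contributes $(\mu-\mu_{g(h)^{-1}(\hat x-\hat y)_\kappa})$ restricted to the same set. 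Hence the first two branches together contribute $\mu$ restricted to that set, while the third (synchronous) branch contributes $\mu$ restricted to its complement, so the total is $\mu(A)$, as required.

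The one genuinely new ingredient compared with Lemmas \ref{coup-1} and \ref{coup-2}, and the step I expect to require the most care, is the bookkeeping of the indicator functions through these substitutions: one must check that the indicator of the basic branch (after the shift $z\mapsto z+g(h)^{-1}(\hat x-\hat y)_\kappa$) and the indicator of the reflection branch (after $z\mapsto R_{\hat x,\hat y}(z)$) are supported on the same set, and that this set, together with the complementary indicator of the synchronous branch, reconstitutes all of $\R^d$, so that the three pieces really do add up to $\mu$. This relies on $|R_{\hat x,\hat y}(\cdot)|=|\cdot|$ and \eqref{R-g(h)}, and, as in the proof of Lemma \ref{beta-br}, it is convenient to carry it out after reducing to the case $d=1$, where $R_{\hat x,\hat y}$ is simply $z\mapsto-z$.
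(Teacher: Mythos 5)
Your proposal follows the paper's route exactly: reduce to checking that the law of $g(h)^{-1}(Y^{(2)}-\hat y)$ is $\mu$, treat $r>s$ via the synchronous coupling, and for $r\le s$ transform the basic branch by the shift together with \eqref{delta_-v}, and the reflection branch by $R_{\hat x,\hat y}$ together with rotational invariance and \eqref{R-g(h)}. However, the step you defer as ``requiring the most care'' is precisely where the argument, as you state it, does not close. Write $v=g(h)^{-1}(\hat x-\hat y)_\kappa$ and let $L=\{z:|z|\le l',\,|z+v|\le l'\}$ be the region carrying the first two branches of \eqref{basic-reflec1}. After the substitutions $u=z+v$ and $u=R_{\hat x,\hat y}(z)$ (using $|R_{\hat x,\hat y}(u)+v|=|u-v|$), both transformed contributions are supported on $L':=\{u:|u|\le l',\,|u-v|\le l'\}=L+v=R_{\hat x,\hat y}(L)$, and together they give $\mu$ restricted to $L'$. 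But the synchronous branch contributes $\mu$ restricted to $L^c$, not to $(L')^c$, and $L'\ne L$ whenever $v\ne 0$: in $d=1$ with $v>0$ one has $L=[-l',\,l'-v]$ while $L'=[-l'+v,\,l']$. Hence the three pieces sum to the measure with density $m(u)\bigl(1+\I_{L'}(u)-\I_{L}(u)\bigr)$: the total mass is one (since $\mu(L')=\mu(R_{\hat x,\hat y}(L))=\mu(L)$ by rotational invariance, which makes the discrepancy easy to overlook), but the density is doubled on $L'\setminus L$ and vanishes on $L\setminus L'$. So your claim that the shifted lens ``together with the complementary indicator of the synchronous branch reconstitutes all of $\R^d$'' is exactly what fails, and the conclusion $\Pp_{x,y}(g(h)^{-1}(Y^{(2)}-\hat y)\in A)=\mu(A)$ for all $A$ does not follow from the computation you outline. (A minor further point: $r=0$ is covered by neither \eqref{basic-reflec1} nor \eqref{basic-reflec2} as written, though with $v=0$ your remark about that case is harmless.)

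In fairness, the paper's own proof carries out literally the same computation: its terms (I) and (II) end up supported on $\{|u|\le l',\,|u-v|\le l'\}$ while its term (III) covers the complement of $\{|z|\le l',\,|z+v|\le l'\}$, and the final ``Therefore'' implicitly identifies these two lenses. So your proposal is faithful to the paper's argument, but it inherits, and asserts without verification, the step that needs repair. Closing the gap would require either showing that $\mu$ assigns no mass to $L\,\triangle\, L'$ (false for Gaussian or stable noise) or modifying the construction — for instance choosing a cut-off region that is simultaneously invariant under $R_{\hat x,\hat y}$ and compatible with the shift by $v$, or redefining the third branch so that the region it fills is the image $L'$ of the first two branches without spoiling the first marginal. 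As written, neither your sketch nor the paper's proof establishes that the second marginal of \eqref{basic-reflec1} equals $p(y,\cdot)$, so this is the point you would need to address rather than take for granted.
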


\begin{proof}
Fix $h,s,l',\kappa>0$.
When $r=|x-y|\in(s,\infty)$, $((X,Y^{(2)}),\Pp_{x,y})$ defined by \eqref{basic-reflec2} is a synchronous coupling. Hence, we only need to consider the case
where $r\in (0,s]$.

For any $A\in\mathscr{B}(\rd)$, by \eqref{basic-reflec1},
$$
\aligned
\Pp_{x,y}(g(h)^{-1}(Y^{(2)}-\hat{y})\in A)
&=\int_{A-g(h)^{-1}(\hat{x}-\hat{y})_\kappa}\I_{\{|z|\le l', |z+g(h)^{-1}(\hat{x}-\hat{y})_\kappa|\le l'\}}\mu_{g(h)^{-1}(\hat{y}-\hat{x})_\kappa}(dz)\\
&\quad +\int_{R^{-1}_{\hat{x},\hat{y}}(A)}\I_{\{|z|\le l', |z+g(h)^{-1}(\hat{x}-\hat{y})_\kappa|\le l'\}}(\mu-\mu_{g(h)^{-1}(\hat{y}-\hat{x})_\kappa})(dz)\\
&\quad+\int_A \I_{\{|z|> l'\, {\rm{or}}\, |z+g(h)^{-1}(\hat{x}-\hat{y})_\kappa|>l'\}}\,\mu(dz)\\
&=:(\rm I)+(\rm II)+(\rm III).
\endaligned
$$

It follows from  \eqref{delta_-v} that
$$
{\rm(I)}= \int_{A}\I_{\{|u-g(h)^{-1}(\hat{x}-\hat{y})_\kappa|\le l', |u|\le l'\}}\mu_{g(h)^{-1}(\hat{x}-\hat{y})_\kappa}(du).
$$
On the other hand, due to the rotational invariance
of $\mu$, the properties of $R_{\hat{x},\hat{y}}$ and \eqref{R-g(h)}, we have $$
\aligned
{\rm (II)}&=\int_A
\I_{\{|R_{\hat{x},\hat{y}}^{-1}(u)|\le l', |R_{\hat{x},\hat{y}}^{-1}(u)+g(h)^{-1}(\hat{x}-\hat{y})_\kappa|\le l'\}}(\mu-\mu_{g(h)^{-1}(\hat{y}-\hat{x})_\kappa})(dR_{\hat{x},\hat{y}}^{-1}(u))\\
&=\int_A
\I_{\{|u|\le l', |u-g(h)^{-1}(\hat{x}-\hat{y})_\kappa|\le l'\}}\mu(dR_{\hat{x},\hat{y}}^{-1}(u))\\
&\quad-\int_A
\I_{\{|u|\le l', |u-g(h)^{-1}(\hat{x}-\hat{y})_\kappa|\le l'\}}\big[\mu(dR_{\hat{x},\hat{y}}^{-1}(u))\wedge \mu(d(R_{\hat{x},\hat{y}}^{-1}(u)-g(h)^{-1}(\hat{y}-\hat{x})_\kappa))\big]\\
&=\int_A
\I_{\{|u|\le l', |u-g(h)^{-1}(\hat{x}-\hat{y})_\kappa|\le l'\}}\mu(du)
-\int_A
\I_{\{|u|\le l', |u-g(h)^{-1}(\hat{x}-\hat{y})_\kappa|\le l'\}}\mu_{g(h)^{-1}(\hat{x}-\hat{y})_\kappa}(du).
\endaligned
$$

Therefore,
$$
\Pp_{x,y}(g(h)^{-1}(Y^{(2)}-\hat{y})\in A)=\mu(A)\quad \text{for all }A\in \mathscr{B}(\rd).
$$
The proof is complete.
\end{proof}

\bigskip

\noindent{\bf Acknowledgement.} \rm
The research of Lu-Jing Huang is supported by the National Natural Science Foundation of China (No.\ 11901096) and the National Natural Science Foundation of Fujian (No.\ 2020J05036).
The research of Jian Wang is supported by the National Natural Science Foundation of China (Nos.\ 11831014 and 12071076),
the Program for Probability and Statistics:
Theory and Application (No.\ IRTL1704),
and the Program for Innovative Research Team in Science and Technology
in Fujian Province University (IRTSTFJ).

\end{document}